\newtheorem{theorem}{Theorem}[section]
\newtheorem{remark}{Remark}[section]
\newtheorem{corollary}{Corollary}[section]
\newtheorem{proposition}{Proposition}[section]
\theoremstyle{remark}
\newcommand{\bA}{{\bf A}}
\newcommand{\bB}{{\bf B}}
\newcommand{\um}{\underline{m}}
\newcommand{\bo}{{\bf 0}}
\newcommand{\bG}{{\bf G}}
\newcommand{\bH}{{\bf H}}
\newcommand{\bI}{{\bf I}}
\newcommand{\bT}{{\bf T}}
\newcommand{\bX}{{\bf X}}
\newcommand{\bU}{{\bf U}}
\newcommand{\bV}{{\bf V}}
\newcommand{\bz}{{\bf z}}
\newcommand{\bvare}{{\boldsymbol\varepsilon}}
\newcommand{\bqa}{\begin{eqnarray}}
\newcommand{\eqa}{\end{eqnarray}}
\newcommand{\bqn}{\begin{eqnarray*}}
\newcommand{\eqn}{\end{eqnarray*}}
\newcommand{\be}{\begin{equation}}
\newcommand{\ee}{\end{equation}}
\newcommand{\non}{\nonumber\\}
\newcommand{\rE}{{\rm E}}
\newcommand{\md}{\mbox{d}}
\newcommand{\mS}{{\bf S}}
\def\cov{{\rm Cov}}
\begin{document}

\begin{frontmatter}
\title{Invariance principle and CLT for the spiked eigenvalues of large-dimensional
Fisher matrices 
and applications}
\runtitle{Invariance principle and CLT for spiked eigenvalues of Fisher matrices}

\begin{aug}
\author[A]{\fnms{Dandan} \snm{Jiang}\ead[label=e1]{jiangdd@xjtu.edu.cn}},
\author[B]{\fnms{Zhiqiang} \snm{Hou}\ead[label=e2]{houzq399@nenu.edu.cn}},
\author[C]{\fnms{Zhidong} \snm{Bai}\ead[label=e3]{baizd@nenu.edu.cn}}
\and
\author[D]{\fnms{Runze} \snm{Li}\thanksref{t1}\ead[label=e4]{rzli@psu.edu}}

\thankstext{t1}{Corresponding author: Runze Li.}
\runauthor{Jiang, Hou, Bai and Li}
\address[A]{
School of Mathematics and Statistics,
Xi'an Jiaotong University,
Xi'an {\rm 710049}, China.
\printead{e1}}

\address[B]{Shandong University of Finance and Economics, Jinan {\rm 250014}, China.
\printead{e2}}

\address[C]{KLASMOE and School of Mathematics and Statistics,
Northeast Normal University,
Changchun {\rm 130024}, China.
\printead{e3}}

\address[D]{
Department of Statistics, Pennsylvania State University, University Park,
PA 16802, USA.
\printead{e4}}
\end{aug}

\begin{abstract}
This paper aims to derive asymptotical distributions of {\color{blue} the spiked eigenvalues of the large-dimensional spiked}
 Fisher matrices without Gaussian assumption and the restrictive 
 assumptions on covariance matrices.
We first establish invariance principle for the spiked eigenvalues of the Fisher matrix. That is, we show
that the limiting distributions of the spiked eigenvalues are invariant
over a large class of population distributions satisfying certain conditions.
Using the invariance principle, we further established a central limit theorem (CLT) for
the spiked eigenvalues. As some interesting applications, we use the CLT to
derive the power functions of Roy Maximum root test for linear hypothesis in linear models  {\color {blue}and  the test in signal detection.}
We conduct some Monte Carlo simulation to compare the proposed test with existing ones.
\end{abstract}

\begin{keyword}[class=MSC2020]
\kwd[Primary ]{ 60B20}
\kwd{62H25}
\kwd[; secondary ]{60F05}
\end{keyword}

\begin{keyword}
\kwd{Roy maximum root test}
\kwd{Random matrix theory}
\kwd{Spiked model}
\kwd{Two-sample covariance problem}
\end{keyword}

\end{frontmatter}

\section{Introduction}\label{Int}

Motivated by several applications of hypothesis on two-sample covariance matrices
and linear hypothesis on regression coefficient matrix in linear models,
we consider the following spiked model.
Let  ${\boldsymbol\Sigma}_1$ and ${\boldsymbol\Sigma}_2$ be the covariance matrices from two $p$-dimensional populations, and
${\mathbf S}_1 ,{\mathbf S}_2$ be the corresponding
sample covariance matrices with sample sizes $n_1$ and $n_2$. The
two-sample spiked model assumes that  ${\boldsymbol\Sigma}_2={\boldsymbol\Sigma}_1+{\boldsymbol\Delta}$,
where ${\boldsymbol\Delta}$ is a $p \times p$ matrix of  finite rank $M$. It is
of great interest to study statistical inference on the spikes,
including but not limited to, testing  the presence of the spikes,
testing the number of the  spikes, and calculating the power under
the alternative hypothesis in two-sample testing problems. Thus,
it is critical to establish the asymptotic properties of the
spiked eigenvalues of a Fisher matrix  ${\mathbf F}={\mathbf S}_1{\mathbf S}_2^{-1}$. It is of particular
interest to derive the asymptotical distribution of
$\lambda_{\max}(\mathbf F)$, the largest eigenvalue of $\mathbf F$. In this
paper, we first establish an invariance principle for the spiked eigenvalues of
the Fisher matrix, and the invariance principle can be used as a
universal probability tool to derive the asymptotical distribution
of local spectral statistics of $\mathbf F$.

The one-sample spiked model ${\boldsymbol\Sigma}={\mathbf I}_p+{\boldsymbol \Delta}$ has received a lot
of attentions in the literatures, where $\mathbf I_p$ is the identity
matrix, and $\mathbf \Delta$ is a low-rank $p
\times p$ matrix. Since the seminal work \cite{Johnstone2001}, a
pioneer work on one-sample spiked model under the setting of the
dimension $p$ is of the same order of the sample size $n$, many works have
been published and  focus  on the research of  the asymptotic law
for the spiked eigenvalues of large-dimensional covariance matrix
\citep{BaiNg2002, Baik2005, BaikSilverstein2006, Paul2007,
BaiYao2008, Onatski2009,Onatski2012}. Also see  \citep{BaiYao2012,
FanWang2015, CaiHanPan2017,JiangBai2018} for a more general
one-sample spiked model. These works establish  the limiting distribution for the spiked eigenvalues
of the sample covariance matrices under different settings.

Compared with the one-sample spiked model, there are relatively
few studies on  two-sample spiked model. \cite{Zhengetal2017}
derived the limiting distribution of the eigenvalues of a
Fisher matrix and establish a CLT for a wide class of functionals of all the eigenvalues as a whole.
{\color{blue} According to \cite{Hanetal2016}, the largest eigenvalue of the Fisher matrix follows the
    Tracy-Widom law under some conditions}.
Therefore, the results in \cite{Zhengetal2017}
are not applicable for the local spectral statistics, especially
for those made of spiked eigenvalues. For a simplified two-sample spiked
models assuming that  ${\boldsymbol\Sigma}_1{\boldsymbol\Sigma}_2^{-1}$ is a rank $M$
perturbation of identity matrix with diagonal independence and
bounded population fourth moment,  \cite{WangYao2017} established CLT for the extreme
eigenvalues  of large-dimensional spiked Fisher matrices.
\cite{JO2019} proposed the rank-one two-sample spiked models that
represent the James' classes in \cite{James1964}, and further derived the asymptotic
behavior of the  likelihood ratios in large-dimensional setting.
The aforementioned works impose some restrictive or unrealistic conditions
such as only one threshold,  diagonal assumption, rank-one perturbation, etc.
The first two conditions (i.e., only one threshold and diagonal assumption) imply
that the spiked eigenvalues and non-spiked eigenvalues
are generated by independent variables.  The assumption of rank-one perturbation
means that there is only one spike.

In this paper, we study the asymptotical properties of Fisher
matrix of a two-sample spiked model under a new setting,
which allows that  (a) two populations may have very different
covariance matrices, (b) the spiked eigenvalues of the Fisher matrix
may be scattered into the spaces of a few bulks, and (c) the largest
eigenvalue may tend to infinity. Under this new setting, the fourth moments of population
are not required to be bounded. For ease of presentation, we refer to a Fisher matrix with
this new setting as a {\it generalized spiked Fisher matrix}.
Under a general  setting, we establish an invariance principle
for the generalized spiked Fisher matrix by using a similar but more complicated
technique in  \cite{JiangBai2018}.
With the invariance principle of generalized spiked Fisher matrix,
we establish a CLT for the local spiked eigenvalues of a generalized spiked Fisher
matrix. As {\color{blue}applications}, we use the CLT to
derive the power {\color{blue}functions} of Roy maximum root test on linear
hypothesis in large-dimensional linear models {\color{blue} and the signal detection test.}

Compared with  existing works on spiked Fisher matrices, this work
relaxes  the bounded fourth-moment condition on population to a
tail probability condition, which is a regular and necessary
condition in the weak convergence of the largest eigenvalue. Under
the new setting, we establish an invariance principle theorem for the
generalized spiked Fisher matrix. With the aid of the invariance principle theorem,
we further establish the CLT for the local spiked
eigenvalues of large-dimensional generalized spiked Fisher
matrices under mild assumptions on the population distribution.
As a by-product, our results naturally extend the result of \cite{WangYao2017} to
a general case under which we can successfully remove the diagonal or  block-wise
diagonal assumption on the matrix  ${\boldsymbol\Sigma}_1{\boldsymbol\Sigma}_2^{-1}$.  Our  setting allows
that the spiked eigenvalues may be generated from the variables
partially dependent on the ones corresponding to the non-spiked
eigenvalues. Our setting also allows a few pairs of thresholds for bulks of spiked
eigenvalues.  In summary, our setting is more realistic than the ones  in the existing works.

The rest of this paper is organized as follows. We establish the invariance principle and the CLT of generalized
spiked Fisher matrix in Sections 2 and 3, respectively.
We use the CLT to derive the local power functions of Roy maximum root test for linear hypothesis
in large-dimensional linear models {\color{blue} and the test in  signal detection}  in Section 4. We present some numerical study in Section 5.
Some technical proofs are given in Section 6. Additional technical proofs
are presented in the supplementary material.

\section{Invariance principle for spiked generalized Fisher matrix}
\label{Pre}
\subsection{Phase transition for the spiked eigenvalues}\label{Sec2.1}

Suppose that $\check{\mathbf x}_j$, $j=1, \ldots, n_1$ and
$\check{\mathbf y}_l$, $l=1,\ldots, n_2$ are random samples from
$p$-dimensional populations $\check{\mathbf x}$ and $\check{\mathbf y}$ with
$\rE(\check{\mathbf x})=\rE(\check{\mathbf y})=\mathbf 0$ and $\cov(\check{\mathbf x})=\boldsymbol\Sigma_1$
and $\cov(\check{\mathbf y})=\boldsymbol\Sigma_2$, respectively. Define
$\mathbf x_j=\boldsymbol \Sigma_1^{-1/2} \check{\mathbf x}_j$ and $ \mathbf y_l=\boldsymbol\Sigma_2^{-1/2}
\check{\mathbf y}_l$, for $j=1,\cdots, n_1$ and $l=1,\cdots,n_2$. For ease of presentation, we represent the
samples as $\boldsymbol\Sigma_1^{1/2}\mathbf X$ and $\boldsymbol\Sigma_2^{1/2}\mathbf Y$,  where
$\mathbf X=(\mathbf x_1,\cdots,\mathbf x_{n_1})=(x_{ij})$ and
$\mathbf Y=(\mathbf y_1,\cdots,\mathbf y_{n_2})=(y_{il})$ are two independent
$p$-dimensional arrays with  components having zero mean  and
identity covariance matrix. To broaden the application of new theory
developed in this paper, we allow both $\mathbf X$ and $\mathbf Y$ to be complex matrix, and
denote $\mathbf A^*$ to be the conjugate 
transpose of a complex matrix $\mathbf A$.
Define $\mathbf T_p=\boldsymbol\Sigma_1^{1/2}\boldsymbol\Sigma_2^{-{1/2}}$ and assume that the
spectrum of  $\mathbf T_p^*\mathbf T_p$  is formed as
\begin{equation}
\beta_{p,1}, \cdots,  \beta_{p,j},\cdots,\beta_{p,p}\label{array}
\end{equation}
in descending order.
Denote the spikes as
$\alpha_k:=\beta_{p,j_k+1}= \cdots= \beta_{p, j_k+m_k}$ with $j_k$'s being arbitrary ranks in the array (\ref{array}). 
All the spikes,
 $\alpha_1, \cdots, \alpha_K$, with multiplicity $m_k, k=1,\cdots,K$  are lined arbitrarily in groups among all the eigenvalues, satisfying $m_1+\cdots+m_K=M$, a fixed integer.
{\color{blue}Additionally,  the spiked eigenvalues are allowed  to be
extremely larger or smaller than the non-spiked ones, which is
useful when considering system fluctuations. Such settings have
not been considered in other existing literatures.}
The corresponding sample covariance matrices of the two observations are
\begin{equation}{\mathbf S}_1
=\frac{1}{n_1}\boldsymbol\Sigma_1^{1 \over 2}
\mathbf X\mathbf X^*\boldsymbol\Sigma_1^{1 \over 2}, \quad {\mathbf S}_2
=\frac{1}{n_2}\boldsymbol\Sigma_2^{1 \over 2}
\mathbf Y\mathbf Y^*\boldsymbol\Sigma_2^{1 \over 2}, \label{S1}
\end{equation}
respectively.
We will investigate  the eigenvalues of the generalized spiked Fisher matrix
$\mathbf F=\mathbf S_1\mathbf S_2^{-1}=\boldsymbol\Sigma_1^{1/2}\tilde {\mathbf S}_1
\boldsymbol\Sigma_1^{1/2}\boldsymbol\Sigma_2^{-{1/2}}\tilde {\mathbf S}_2^{-1}\boldsymbol\Sigma_2^{-{1/2}}$,
where
${\tilde {\mathbf S}}_1=(1/n_1)\mathbf X\mathbf X^*$ and ${\tilde {\mathbf S}}_2=({1}/{n_2})\mathbf Y\mathbf Y^*$  are the standardized
sample covariance matrices, respectively.
It is well known that the eigenvalues of $\mathbf F$ are the same of the matrix with the form (Still use $\mathbf F$ for brevity, if no confusion):
\begin{equation}
\mathbf F=\mathbf T_p^{*}{\tilde {\mathbf S}}_1\mathbf T_p{\tilde {\mathbf S}}_2^{-1}.\label{F}
\end{equation}
Define the singular value decomposition of ${\mathbf T}_p$ as
 \be \mathbf T_p= \mathbf U\left(
\begin{array}{cc}
 {\mathbf D}_1^{1 \over 2} & \bo    \\
 \bo & {\mathbf D}_2^{1 \over 2}
\end{array}
\right){\mathbf V}^{*},
\label{SVD}
\ee
where $\mathbf U, \mathbf V$ are  unitary matrices (orthogonal matrices for real case),
${\mathbf D}_1$ is a diagonal matrix of the $M$ spiked eigenvalues of  the generalized spiked Fisher matrix $\mathbf F$,
 and ${\mathbf D}_2$ is the diagonal matrix of the non-spiked ones  with bounded components.

 Let $\mathcal{J}_k=\{ j_k+1,\cdots, j_k+m_k\}$ be the set of  ranks  of  $\alpha_k$ with multiplicity $m_k$ among all the eigenvalues of $\mathbf T_p^*\mathbf T_p$.
Set the sample eigenvalues of the generalized spiked  Fisher matrix $\mathbf F$ in the descending order  as
$l_{p,1}(\mathbf F), \cdots,  l_{p,j}(\mathbf F), \cdots,  l_{p,p}(\mathbf F)$.
To derive the limiting law for the spiked eigenvalues of  $\mathbf F$, we impose some
conditions as follow.
\begin{description}
\item{\bf Assumption~1} The two double arrays $\{x_{ij}, i, j =
1,2,...\} $ and $\{y_{ij}, i, j = 1,2,...\}$ consist of independent
and identically distributed ({\rm i.i.d.}) random variables with
mean 0 and variance 1.
{Furthermore, $\rE x_{ij}^2=0$ and $\rE y_{ij}^2=0$  hold for the complex case
if the variables and $\bT_p$ are complex.}
\item{\bf Assumption~2.}
Assuming that
$c_{n_1}\!=\!p/n_1 \in (0, \infty)$, $c_{n_2}\!=\!p/n_2 \in (0,1)$ is considered throughout the paper when  $\min( p,n_1,n_2)\rightarrow \infty$.
The matrix $\mathbf T_p=\boldsymbol\Sigma_1^{1/2}\boldsymbol\Sigma_2^{-{1/2}}$ is non-random and the empirical spectral distribution of $\{\mathbf T_p^*\mathbf T_p\}$ excluding  the spikes,
$H_n(t)$, tends to proper probability measure $H(t)$  if  $\min( p,n_1,n_2)\rightarrow \infty$.
(\textbf{Dandan: why $c_2$ has to be in (0,1)})
\item{\bf Assumption~3.}
Assume $\lim\limits_{\tau \rightarrow \infty}\tau^4 {\rm P}\left(|x_{11}| >\tau \right)=0$  and
$\lim\limits_{\tau \rightarrow \infty}\tau^4 {\rm P}\left(|y_{11}| >\tau \right)$ $= 0$
for the {\rm i.i.d.} samples, where both of the fourth-moments are not necessarily required to exist.
\item{\bf Assumption~4}
Suppose that
 \begin{eqnarray*}
 &&\max\limits_{t,s} |u_{ts}|^2\big[\rE\{|x_{11}|^4{\delta}(|x_{11}|<\eta_{n_1}\sqrt{n_1})\}-{\mu_1}\big] \rightarrow 0, \\ 
 &&\max\limits_{t,s} |v_{ts}|^2\big[\rE\{|y_{11}|^4\delta(|y_{11}|<\eta_{n_2}\sqrt{n_2})\}-{\mu_2}\big]\rightarrow 0, 
 \end{eqnarray*}
 where for some constants $\mu_1$ and $\mu_2$,
$\delta ( \cdot )$ is
the indicator function and $\mathbf U_1=(u_{ts})$, $\mathbf V_1=(v_{ts})$
are    the first $M$ columns  of matrix $\mathbf U$ and $\mathbf V$ defined in (\ref{SVD}), respectively.
\item{\bf Assumption~5}
The spiked eigenvalues of the matrix  $\mathbf F$, $\alpha_1,\cdots, \alpha_K$,
with multiplicities $m_1,\cdots,$  $m_K$  laying outside the support of $H(t)$, satisfy  $\psi'(\alpha_k)>0, $  for $1\le k \le K$, where \\[-4mm]
\be
\psi_k:= \psi(\alpha_k)=\frac{\alpha_k\left\{1-c_{1}\int t/(t-\alpha_k)\mbox{d}H(t)\right\}}
{1+c_{2}\int\alpha_k/(t-\alpha_k)\mbox{d}H(t)}\label{psik}
\ee
is the limit of the distant sample spiked eigenvalues of a generalized spiked Fisher matrix.
\end{description}

The limit in (\ref{psik}) is derived by \cite{Jiangetal2019} when $x_{ij}$ and $y_{ij}$
have bounded fourth moments, and is detailed in the following proposition.

\begin{proposition}\label{PT}
For any spiked eigenvalue $\alpha_k$ with multiplicity $m_k, k=1,\cdots,K$ of $\mathbf F$ defined in (\ref{F}),
 let
  \[\rho_k =
\left\{
\begin{array}{cl}
  \psi(\alpha_k),& \mbox{ if } \psi'(\alpha_k)>0;  \\
   \psi(\underline\alpha_k), &   \mbox{ if there exists } \underline\alpha_k \mbox{ such that } \psi'(\underline\alpha_k)=0,
   \mbox{ and }\psi'(t)<0,  \mbox{ for all } \alpha_k\le t<\underline{\alpha}_k;
\\
   \psi(\overline\alpha_k), &   \mbox{ if there exists } \overline\alpha_k \mbox{ such that } \psi'(\underline\alpha_k)=0,
    \mbox{ and }
 \psi'(s)<0, \mbox{ for all } \overline{\alpha}_k<s\le\alpha_k,
\end{array}
\right.
\]
where $\psi(\alpha_k)$ is defined in (\ref{psik}).
Then under the Assumptions~1-2, and the bounded fourth-moment assumption, it holds that
for all $ j \in \mathcal{J}_k$, $\{l_{p,j}\}$ almost surely that  $\{l_{p,j}/\rho_k-1\}$  converges to 0.
 \end{proposition}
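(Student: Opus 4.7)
The plan is to start from the characteristic equation for a sample spiked eigenvalue of $\mathbf{F}$, use the SVD of $\mathbf{T}_p$ in (\ref{SVD}) to decouple the $M$-dimensional spiked block $\mathbf{D}_1$ from the $(p-M)$-dimensional non-spiked block $\mathbf{D}_2$, and reduce the question to an $M\times M$ random characteristic equation. Concretely, I would write any sample spike $l\in\{l_{p,j}:j\in\mathcal{J}_k\}$ as a solution of $\det(\mathbf{T}_p^{*}\tilde{\mathbf{S}}_1\mathbf{T}_p - l\tilde{\mathbf{S}}_2)=0$, substitute (\ref{SVD}), and partition $\mathbf{U}^{*}\tilde{\mathbf{S}}_1\mathbf{U}$ and $\mathbf{V}^{*}\tilde{\mathbf{S}}_2\mathbf{V}$ conformably with the split $M,\,p-M$. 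A Schur-complement reduction with respect to the $(p-M)\times(p-M)$ block converts this into
\[
\det\!\bigl(\mathbf{I}_M - \mathbf{D}_1^{1/2}\,\mathbf{M}_n(l)\,\mathbf{D}_1^{1/2}\bigr)=0,
\]
where $\mathbf{M}_n(l)$ is an $M\times M$ random matrix assembled from the off-diagonal blocks of $\mathbf{U}^{*}\tilde{\mathbf{S}}_1\mathbf{U}$ and $\mathbf{V}^{*}\tilde{\mathbf{S}}_2\mathbf{V}$ together with the resolvent of the non-spiked Fisher block evaluated at $l$.

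For $l$ bounded away from the support of the limiting spectral distribution of the non-spiked Fisher block, I would then apply the trace lemma and concentration for quadratic and bilinear forms in i.i.d.\ vectors with finite fourth moments. Under Assumptions~1--2 and the bounded fourth-moment hypothesis, these tools give $\mathbf{M}_n(l)\to\phi(l)\,\mathbf{I}_M$ almost surely, where $\phi$ is a deterministic functional of $c_1,\,c_2$ and $H$ whose inverse on the supercritical branch is precisely the function $\psi$ of (\ref{psik}). The characteristic equation then collapses to $\alpha_k\phi(l)=1$, whose admissible root lying outside the relevant bulk is $l=\psi(\alpha_k)$. Combined with the exact separation theorem for Fisher matrices (which rules out sample eigenvalues between distinct bulks in the limit), this forces $l_{p,j}/\psi(\alpha_k)\to 1$ almost surely for every $j\in\mathcal{J}_k$ whenever $\psi(\alpha_k)$ is itself a regular value outside the bulk, i.e.\ whenever $\psi'(\alpha_k)>0$.

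The remaining case handles the phase transition encoded in the definition of $\rho_k$. When $\psi'(\alpha_k)\le 0$, the candidate value $\psi(\alpha_k)$ falls inside, or past a critical point into, the support of a bulk, so no sample spike can settle there; by monotonicity of $\psi$ on each branch between its stationary points, the effective limit must be the nearest critical value $\psi(\underline{\alpha}_k)$ or $\psi(\overline{\alpha}_k)$ on the side of $\alpha_k$ prescribed by the sign pattern of $\psi'$, exactly as stated. The hard part will be this phase-transition step: showing that a subcritical spike \emph{adheres} to the critical value rather than being absorbed into the interior of a bulk requires combining the exact separation theorem for the generalized spiked Fisher matrix (where the bounded fourth-moment assumption enters in an essential way) with a careful Rouch\'e/continuity argument tracking the roots of $\det(\mathbf{I}_M-\mathbf{D}_1^{1/2}\mathbf{M}_n(l)\mathbf{D}_1^{1/2})=0$ along the branches of $\psi$. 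Beyond that the proof parallels \cite{Jiangetal2019}, with additional bookkeeping to accommodate spikes split among several bulks of $H$ and laid out at arbitrary ranks within the array (\ref{array}).
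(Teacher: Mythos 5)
The paper does not actually prove this proposition: it is imported from \cite{Jiangetal2019} (where the bounded fourth-moment case is treated), and the only argument the paper itself adds is the truncation step that afterwards relaxes the moment hypothesis to Assumption~3. Your sketch follows precisely the route of that reference and of the paper's own Section~2.2 machinery --- the Schur-complement reduction to an $M\times M$ determinant is the first display of (\ref{IBJ}), and your limiting equation $\alpha_k\phi(l)=1$ is exactly (\ref{0eqa}) --- so the approach is the intended one. The one caveat is that the items you defer (uniform almost-sure convergence of the quadratic/bilinear forms in $l$ over a neighborhood outside the bulk, the exact separation theorem for Fisher matrices, and the adherence of subcritical spikes to the edge value $\psi(\underline\alpha_k)$ or $\psi(\overline\alpha_k)$) constitute essentially the entire content of the cited proof rather than routine bookkeeping, so as written the proposal is a correct roadmap rather than a self-contained argument.
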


Since the convergence of $c_{n_1} \to c_1$, $c_{n_2} \to c_2$  and $H_n(t) \to H(t)$ may be very slow, the difference $\sqrt{n}(l_{p,j} -\psi_k)$ may not have a limiting distribution.
Thus, we use \\[-5mm]
\[
  \psi_{n,k}:= \psi_n(\alpha_k)=\frac{\alpha_k\left\{1-c_{n_1}\int
  t/(t-\alpha_k)\mbox{d}H_n(t)\right\}}{1+c_{n_2}\int\alpha_k/(t-\alpha_k)\mbox{d}H_n(t)},\]
   instead of $\psi_k$ in $\rho_k$,
and $n$ denotes $(n_1,n_2)$, especially the case of CLT. Then,
we only require $c_{n_1}=p/n_1; c_{n_2}=p/n_2$, and both the dimensionality $p$ and the sample sizes $n_1, n_2$ grow to infinity  simultaneously, but not necessarily  in proportion.

Note that the Proposition~\ref{PT}  holds under the bounded fourth-moment assumption.
To relax the bounded fourth-moment assumption to  Assumption~3 on the tail probability,
we introduce the following  truncation procedures.

Let $\delta(A)$ be the indication of set $A$. Let $\hat x_{ij}= x_{ij} \delta (|x_{ij}| < \eta_{n_1} \sqrt{n_1})$,
$\tilde x_{ij}=(\hat x_{ij} - \rE \hat x_{ij})/ \sigma_{n_1}$ and $\hat y_{ij}= y_{ij}
\delta (|y_{ij}| < \eta_{n_2} \sqrt{n_2})$,  $\tilde y_{ij}=(\hat y_{ij} -
\rE \hat y_{ij})/ \sigma_{n_2}
$,
where  $\sigma_{n_1}^2=$ $\rE \left|\hat x_{ij}- \rE  \hat x_{ij} \right|^2$ and $\sigma_{n_2}^2=$ $\rE \left|\hat y_{ij}- \rE  \hat y_{ij} \right|^2$.
By the related techniques of the proofs in Supplement~B of \cite{JiangBai2018},
we can show that it is equivalent to  replace the entries of $x_{ij}, y_{ij}$
with their corresponding  truncated and centralized variables by  Assumption~3.
In addition,
the convergence rates of arbitrary moments of  $\tilde x_{ij}$ and $\tilde y_{ij}$
are  the same as the ones depicted in Lemma~C.1 in \cite{JiangBai2018}.
Therefore, it is reasonable to consider the generalized spiked Fisher matrix $\mathbf F=\mathbf S_1 \mathbf S_2^{-1}$,
which is  generated from the entries truncated at $\eta_{n_1} \sqrt{n_1}$ for $x_{ij}$ and
$\eta_{n_2} \sqrt{n_2}$ for $y_{ij}$, centralized and renormalized. For simplicity, we assume that
$|x_{ij}| <\eta_{n_1} \sqrt{n_1}, |y_{ij}| <\eta_{n_2} \sqrt{n_2}$,
$\rE x_{ij}=\rE y_{ij}=0, \rE |x_{ij}|^2=\rE |y_{ij}|^2=1$ for the real case and
Assumption~3
is satisfied. But for the complex case, the truncation and renormalization cannot reserve
{the requirement of $\rE x_{ij}^2=\rE y_{ij}^2 =0$. However, one may prove
that  $\rE x_{ij}^2 =o(n_1^{-1})$ and $\rE y_{ij}^2 =o(n_2^{-1})$.}
By the truncation procedures, the Proposition~\ref{PT} still holds
in probability without the bounded fourth-moment assumption if the Assumption~3  is satisfied.

\subsection{Invariance principle theorem}
 For the generalized  spiked Fisher matrix
$\mathbf F=\mathbf T_p^{*}{\tilde {\mathbf S}}_1\mathbf T_p{\tilde {\mathbf S}}_2^{-1}$ defined in (\ref{F}),
we consider the arbitrary sample spiked eigenvalue of $\mathbf F$, $l_{p,j}, j \in \mathcal{J}_k, k=1,\ldots, K$.
By the singular value decomposition of $\mathbf T_p$ in (\ref{SVD}) and  the eigen-equation for $\mathbf F$, we have
\[0=|l_{p,j}\mathbf I- \mathbf F|=\left|l_{p,j}\mathbf I-\mathbf V\left(
\begin{array}{cc}
 \mathbf D^{1 \over 2}_1 & \bo    \\
 \bo & \mathbf D^{1 \over 2}_2
\end{array}
\right)
\mathbf U^* {\tilde {\mathbf S}}_1\mathbf U
\left(
\begin{array}{cc}
 \mathbf D^{1 \over 2}_1 & \bo    \\
 \bo & \mathbf D^{1 \over 2}_2
\end{array}
\right)\mathbf V^*{\tilde {\mathbf S}}^{-1}\right|.\]
It is equivalent to
\begin{eqnarray*}
0&=&|l_{p,j} {\mathbf V}^*{\mathbf V}- {\rm diag}({\mathbf D}_1^{1 \over 2},{\mathbf D}_2^{1 \over 2}){\mathbf U}^* {\tilde {\mathbf S}}_1{\mathbf U}{\rm diag}({\mathbf D}_1^{1 \over 2},{\mathbf D}_2^{1 \over 2})|\\
&=&\Bigg|\begin{pmatrix} l_{p,j} {\mathbf V}_1^*{\tilde {\mathbf S}}_2 {\mathbf U}_1, &l_{p,j} {\mathbf V}_1^* {\tilde {\mathbf S}}_2 {\mathbf V}_2\\
l_{p,j} \bV_2^*{\tilde {\mathbf S}}_2 {\mathbf V}_1, &l_{p,j} {\mathbf V}_2^* {\tilde {\mathbf S}}_2 \bV_2\end{pmatrix}
-\begin{pmatrix}{\mathbf D}_1^{1 \over 2}{\mathbf U}_1^*{\tilde {\mathbf S}}_1{\mathbf U}_1\mathbf D_1^{1 \over 2}, & {\mathbf D}_1^{1 \over 2}{\mathbf U}_1^*
{\tilde {\mathbf S}}_1\mathbf D_2^{1 \over 2}\\ \mathbf D_2^{1 \over 2}\mathbf U_2^* {\tilde {\mathbf S}}_1 \mathbf U_1 \mathbf D_1^{1 \over 2},
& \mathbf D_2^{1 \over 2}\mathbf U_2^* {\tilde {\mathbf S}}_1\mathbf D_2^{1 \over 2}\end{pmatrix}\Bigg|.
\end{eqnarray*}

If we consider $l_{p,j}$ as  an sample spiked eigenvalue of $\mathbf F$,  but not of $\mathbf D_2^{1/2}\mathbf U_2^* {\tilde {\mathbf S}}_1\mathbf U_2\mathbf D_2^{1/2}$ $\cdot(\mathbf V_2^* \tilde {\mS_2} \mathbf V_2\!)\!^{-1}$, then the following equation holds for every sample spiked eigenvalue, $l_{p,j}, j \in \mathcal{J}_k, k=1,\cdots, K$
\begin{align}
0=&\Big|l_{p,j} \mathbf V_1^*{\tilde {\mathbf S}}_2\mathbf  V_1-\mathbf D_1^{1 \over 2}\mathbf U_1^* {\tilde {\mathbf S}}_1\mathbf U_1\mathbf D_1^{1 \over 2}-(l_{p,j} \mathbf V_1^* {\tilde {\mathbf S}}_2\mathbf V_2-\mathbf D_1^{1 \over 2}\mathbf U_1^* {\tilde {\mathbf S}}_1\mathbf U_2\mathbf D_2^{1 \over 2})\mathbf Q^{-{1 \over 2}}\\
&\! \cdot \!(l_{p,j} \mathbf I\!-\!\mathbf Q^{-{1 \over 2}}\mathbf D_2^{1 \over 2}\!\mathbf U_2^*{\tilde {\mathbf S}}_1\mathbf U_2\mathbf D_2^{1 \over 2}\mathbf Q^{-{1 \over 2}})^{-1}\mathbf Q^{-{1 \over 2}}(l_{p,j} \mathbf V_2^*{\tilde {\mathbf S}}_2\mathbf  V_1\!-\!\mathbf D_2^{1 \over 2}\mathbf U_2^* {\tilde {\mathbf S}}_1 \mathbf U_1 \mathbf D_1^{1 \over 2})\Big|,\non
=&\bigg|
\frac{l_{p,j}}{n_2}\mathbf V_1^*\mathbf Y \mathbf Y^*\mathbf V_1-\frac{l_{p,j}^2}{n_2^2}\mathbf V_1^*\mathbf Y\mathbf Y^*\mathbf V_2\mathbf Q^{-{1 \over 2}} (l_{p,j} \mathbf I- \tilde {\mathbf F}\big)^{-1} \mathbf Q^{-{1 \over 2}}\mathbf V_2^*\mathbf Y \mathbf Y^*\mathbf V_1\non
&-\frac{l_{p,j}}{n_1}\mathbf  D_1^{1 \over 2}\mathbf U_1^*\mathbf X\big(l_{p,j}\mathbf I-\underline{\tilde {\mathbf F}}\big)^{-1}\mathbf X^*\mathbf U_1\mathbf D_1^{1 \over 2}\non
&+\frac{l_{p,j}}{n_2}\mathbf V_1^*\mathbf Y \mathbf Y^*\mathbf V_2 \mathbf Q^{-{1 \over 2}}\big(l_{p,j} \mathbf I- \tilde {\mathbf F}\big)^{-1} \mathbf Q^{-{1 \over 2}}
\frac{1}{n_1}\mathbf D_2^{1 \over 2}\mathbf U_2^*\mathbf X\mathbf X^* \mathbf U_1 \mathbf D_1^{1 \over 2}\non
&+\frac{l_{p,j}}{n_1}\mathbf D_1^{1 \over 2}\mathbf U_1^* \mathbf X\mathbf X^*\mathbf U_2\mathbf D_2^{1 \over 2}\mathbf Q^{-{1 \over 2}} (l_{p,j} \mathbf I- \tilde {\mathbf F})^{-1}\mathbf Q^{-{1 \over 2}}\frac{1}{n_2} \mathbf V_2^*\mathbf Y\mathbf Y^* \mathbf V_1\bigg|,\non
=&\bigg|\{\psi_{n,k}+c_{2}\psi^2_{n,k}m(\psi_{n,k})\}\mathbf I_M+\psi_{n,k}\um(\psi_{n,k})\mathbf D_1+\frac 1{\sqrt{p}}\gamma_{kj}\psi_{n,k}\mathbf I_M\non
&\quad+\mathbf B_1(l_{p,j})+\mathbf B_2(l_{p,j})+\frac{\psi_{n,k}}{\sqrt{p}}\boldsymbol \Omega_{M}(\psi_{n,k},\mathbf X,\mathbf Y)+o(\frac {\psi_{n,k}}{\sqrt{p}})\bigg|,
\label{IBJ}
\end{align}
where $\mathbf Q=\mathbf V_2^* {\tilde {\mathbf S}}_2\mathbf V_2$, and $ \tilde {\mathbf F}$ and $\underline{\tilde {\mathbf F}}$ are defined as
\be
 \tilde {\mathbf F}=\displaystyle\frac 1 {n_1}\mathbf Q^{-{1 \over 2}}\mathbf D_2^{1 \over 2}\bU_2^* \mathbf X\mathbf X^*\mathbf U_2\mathbf D_2^{1 \over 2}\mathbf Q^{-{1 \over 2}}, \quad \underline{{\tilde {\mathbf F}}}=\displaystyle\frac 1 {n_1}\mathbf X^*\mathbf U_2\mathbf D_2^{1 \over 2} \mathbf Q^{-1}\mathbf D_2^{1 \over 2}\mathbf U_2^* \mathbf X;\label{FFdef}
\ee
 $\psi_{n,k}$  is used instead of $\psi_k$  to  avoid  the slow convergence 
as mentioned  in  Section~2.1, $\gamma_{kj}=\sqrt{p} \big({l_{p,j}}/\psi_{n,k}-1\big),~  j \in \mathcal{J}_k$ and 
\begin{align*}
\mathbf  B_{1}(l_{p,j})
&= \frac{\psi^2_{n,k}}{n_2^{2}}
\mathbf V_1^*\mathbf Y\mathbf Y^*\mathbf V_2\mathbf Q^{-\frac1{2}} (\psi_{n,k} \mathbf I-\tilde {\mathbf F})^{-1} \mathbf Q^{-\frac1{2}}\mathbf V_2^*\mathbf Y\mathbf Y^*\mathbf V_1\non
&\quad-\frac{l^2_{p,j}}{n_2^{2}}
\mathbf  V_1^*\mathbf Y \mathbf Y^*\mathbf V_2\mathbf Q^{-\frac1{2}} (l_{p,j} \mathbf I-\tilde {\mathbf F})^{-1}\mathbf Q^{-\frac1{2}}\mathbf V_2^*\mathbf Y\mathbf Y^*\mathbf V_1;\non
\mathbf B_{2}(l_{p,j})
&=\!\frac{\psi_{n,k}}{n_1}\!\mathbf D_1^{1 \over 2}\!\mathbf U_1^*\mathbf X\big(\psi_{n,k}\mathbf I\!-\! \underline{\tilde {\mathbf F}}\big)^{-1}\!\mathbf X^*\mathbf U_1\mathbf D_1^{1 \over 2}\!-\!\frac{l_{p,j}}{n_1}\!\mathbf D_1^{1 \over 2}\!\mathbf U_1^*\mathbf X\big(l_{p,j}\mathbf I\!-\! \underline{\tilde {\mathbf F}}\big)^{-1}{\mathbf X}^*{\mathbf U}_1\mathbf D_1^{1 \over 2}.\nonumber
\end{align*}
Moreover,  the $ \boldsymbol\Omega_{M}(\lambda,\mathbf X,\mathbf Y)$ is defined as
\begin{align}
 \boldsymbol\Omega_{M}(\lambda,\mathbf X,\mathbf Y)&=\sum\limits_{j=1}^5\boldsymbol\Omega_{M,j}(\lambda,\mathbf X,\mathbf Y),\label{OmegaM}
\end{align}
where
\begin{align}
 \boldsymbol\Omega_{M,1}(\lambda,\!\mathbf X,\!\mathbf Y)&\!=\!\sqrt{p}\mathbf V_1^*( {\tilde {\mathbf S}}_{2}-\mathbf I_p)\mathbf V_1\non
  \boldsymbol\Omega_{M,2}(\lambda,\!\mathbf X,\!\mathbf Y)&\!=\!\frac{\sqrt{p}\lambda}{n_2}\!\Big\{\!{\rm tr}\! (\lambda \mathbf I\!-\!\tilde {\mathbf F})^{-1}\mathbf I
 \!-\!\frac{1}{n_2}
 \!\mathbf V_1^*\mathbf Y\mathbf Y^*\mathbf V_2\mathbf Q^{-\frac1{2}} \!(\lambda \mathbf I\!-\!\tilde {\mathbf F})^{-1} \!\mathbf Q^{-\frac1{2}}\!\mathbf V_2^*\mathbf Y\mathbf Y^*\mathbf V_1\!\Big\}\non
\boldsymbol\Omega_{M,3}(\lambda,\!\mathbf X,\!\mathbf Y)&\!=\!\frac{\sqrt{p}}{\sqrt{n_1}\lambda}\mathbf D_1^{1 \over 2} \!\Big[\!\frac{\lambda }{\sqrt{n_1}}\big\{{\rm tr}\! (\lambda \mathbf I\!-\! \underline{\tilde {\mathbf F}})^{-1} \mathbf I  \!-\! \mathbf U_1^*\bX\big(\lambda\mathbf I\!-\! \underline{\tilde {\mathbf F}}\big)^{-1}\!\mathbf X^*\mathbf U_1\!\big\}\Big ]\!\mathbf D_1^{1 \over 2}\non
\boldsymbol\Omega_{M,4}(\lambda,\!\mathbf X,\!\mathbf Y)&\!=\!\frac{\sqrt{p}}{n_1n_2}\mathbf V_1^*\mathbf Y\mathbf Y^*\mathbf V_2 \mathbf Q^{-{1 \over 2}}\big(\lambda \mathbf I\!-\!\tilde {\mathbf F}\big)^{-1} \mathbf Q^{-{1 \over 2}}
\mathbf D_2^{1 \over 2}\mathbf U_2^*\mathbf X\mathbf X^*\mathbf U_1 \mathbf D_1^{1 \over 2}\non
 \boldsymbol\Omega_{M,5}(\lambda,\!\mathbf X,\!\mathbf Y)&\!=\! \frac{\sqrt{p}}{n_1n_2}\mathbf D_1^{1 \over 2}\mathbf U_1^* \mathbf X\mathbf X^*\mathbf U_2\mathbf D_2^{1 \over 2} \mathbf Q^{-{1 \over 2}}\big(\lambda \mathbf I-\tilde {\mathbf F}\big)^{-1}\mathbf  Q^{-{1 \over 2}}
\mathbf V_2^*\mathbf Y\mathbf Y^* \mathbf V_1.\nonumber
\end{align}

Note that
 the covariance matrix between $\mathbf U_1^*\mathbf X$ and  $\mathbf V_1^*\mathbf Y$ is a zero matrix $\bo_{M\times M}$, then
according to   Lemma~2.7 in \cite{BaiSilverstein1998} and equation (9)  in \cite{Jiangetal2019}), 
we also
 obtain that $\psi_k$ satisfies the following equation
\begin{equation}
\psi_{k}+c_{2}\psi^2_{k}m(\psi_{k})+\psi_k\um(\psi_{k}) \alpha_k=0,\label{0eqa} 
\end{equation}
where $m (\lambda), \um(\lambda)$ are the Stieltjes transforms of the limiting spectral distributions  of $\tilde {\mathbf F}$ and $\underline{\tilde {\mathbf F}}$ defined in (\ref{FFdef}), respectively.

We establish the invariance principle of the generalized spiked Fisher matrix
in the following theorem. The invariance principle implies that the limiting distribution of
the spiked eigenvalues of a generalized spiked Fisher matrix remain
the same provided that the population distributions satisfy the Assumptions~1--5.

\begin{theorem}({\sc Invariance Principle Theorem})
\label{thm2}
Assuming that $(\mathbf X, \mathbf Y)$ and $(\mathbf W,\mathbf Z)$ are two pairs of double arrays, each
of which  satisfies Assumptions~1--5, 
$(\mathbf X, \mathbf Y)$ and  $(\mathbf W,\mathbf Z)$ have the same
$\mu_1, \mu_2$ in Assumption~4,
then $\boldsymbol\Omega_{M}(\lambda,\mathbf X,\mathbf Y)$ and $\boldsymbol\Omega_{M}(\lambda,\mathbf W,\mathbf Z)$ have the same
limiting distribution, provided that one of them has a limiting distribution. \end{theorem}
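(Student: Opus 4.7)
The plan is to mimic the Lindeberg-type replacement strategy developed by \cite{JiangBai2018} in the one-sample spiked covariance setting, adapted to the Fisher-matrix setting in which both double arrays $\mathbf X$ and $\mathbf Y$ must be swapped simultaneously. The high-level idea is that each summand $\boldsymbol\Omega_{M,j}(\lambda,\mathbf X,\mathbf Y)$, $j=1,\dots,5$, is built from low-degree polynomials in the entries of $\mathbf X$ and $\mathbf Y$ after conditioning on the resolvents of $\tilde{\mathbf F}$ and $\underline{\tilde{\mathbf F}}$; hence its joint characteristic function asymptotically depends on the entries only through a finite list of moments, which by Assumptions~1, 3 and 4 are matched between the two arrays.

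Step 1: Reduce to a comparison of characteristic functions. For any Hermitian $\boldsymbol\Theta\in\mathbb C^{M\times M}$ and $\lambda$ bounded away from the limiting support, let
\[\phi_n(\boldsymbol\Theta;\mathbf X,\mathbf Y)=\rE\exp\bigl\{i\,\tr\bigl(\boldsymbol\Theta\,\boldsymbol\Omega_{M}(\lambda,\mathbf X,\mathbf Y)\bigr)\bigr\}.\]
It suffices to prove $\phi_n(\boldsymbol\Theta;\mathbf X,\mathbf Y)-\phi_n(\boldsymbol\Theta;\mathbf W,\mathbf Z)\to 0$ for every such $\boldsymbol\Theta$. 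A tightness argument (built into the hypothesis that one limit already exists) lets us upgrade convergence of characteristic functions to equality of limiting laws.

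Step 2: Isolate single entries in a resolvent-stable way. For $\boldsymbol\Omega_{M,1}$ the statement is the classical multivariate CLT for $\sqrt{p}\,\mathbf V_1^*(\tilde{\mathbf S}_2-\mathbf I_p)\mathbf V_1$, whose limit depends only on the second moment and on the fourth-moment proxy $\mu_2$ through $\max_{t,s}|v_{ts}|^2$ in Assumption~4. For $\boldsymbol\Omega_{M,2}$ and $\boldsymbol\Omega_{M,3}$, I would decompose the bilinear resolvent forms such as $\mathbf V_1^*\mathbf Y\mathbf Y^*\mathbf V_2\mathbf Q^{-1/2}(\lambda\mathbf I-\tilde{\mathbf F})^{-1}\mathbf Q^{-1/2}\mathbf V_2^*\mathbf Y\mathbf Y^*\mathbf V_1$ via a martingale difference scheme in the columns of $\mathbf Y$, so that each increment involves the entry being replaced separately from a resolvent $(\lambda\mathbf I-\tilde{\mathbf F}_{(j)})^{-1}$ that is independent of it. Standard rank-one Sherman--Morrison updates express the dependence on each entry as a rational function of degree two with coefficients controlled by the operator norm of the deterministic matrix and of the resolvent, both of which remain bounded on an event of probability $1-o(1)$ by the spectral separation guaranteed by Assumption~5. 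The cross terms $\boldsymbol\Omega_{M,4}$ and $\boldsymbol\Omega_{M,5}$, which couple $\mathbf X$ and $\mathbf Y$, are handled by a two-step conditioning: first condition on $\mathbf Y$ (equivalently on $\mathbf Q$ and $\tilde{\mathbf F}$) and perform the $\mathbf X$-swap, then perform the $\mathbf Y$-swap.

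Step 3: Perform the Lindeberg swap and telescope. Replace the entries $x_{ij}\leftrightarrow w_{ij}$ and $y_{ij}\leftrightarrow z_{ij}$ one at a time, using a fourth-order Taylor expansion of $\exp\bigl\{i\,\tr(\boldsymbol\Theta\,\boldsymbol\Omega_{M})\bigr\}$ in the swapped entry. Because both arrays share mean $0$, unit variance and (in the complex case) vanishing pseudo-variance $o(n_\cdot^{-1})$, the first three terms cancel exactly; the fourth-order term is governed by the difference of truncated fourth moments weighted by $\max_{t,s}|u_{ts}|^2$ or $\max_{t,s}|v_{ts}|^2$, which tends to zero by Assumption~4 because the two arrays converge to the same $\mu_1,\mu_2$; the remainder is $o(\tau^{-4})$ by the tail bound in Assumption~3 together with the truncation levels $\eta_{n_1}\sqrt{n_1},\eta_{n_2}\sqrt{n_2}$. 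Summing the $O(pn_1)+O(pn_2)$ swap errors gives a total bound of $o(1)$, closing the argument.

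The main obstacle will be Step~2: quantifying the smoothness of $\boldsymbol\Omega_{M}$ as a function of a single entry uniformly in $n$, despite the entry appearing inside $(\lambda\mathbf I-\tilde{\mathbf F})^{-1}$ and, nontrivially, inside $\mathbf Q^{-1/2}=(\mathbf V_2^*\tilde{\mathbf S}_2\mathbf V_2)^{-1/2}$. The inverse square root must be treated by a Cauchy integral representation on a contour avoiding the spectrum of $\tilde{\mathbf F}$, which requires the spectral-gap information from Assumption~5 and uniform convergence of $H_n\to H$ from Assumption~2. Once the derivatives of $\boldsymbol\Omega_{M}$ in a single entry are shown to be bounded in expectation by powers of $\eta_{n_\cdot}$ and $p^{-1/2}$, the telescoping bookkeeping in Step~3 is routine.
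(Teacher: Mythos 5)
Your overall route---a Lindeberg-type replacement argument comparing characteristic functions, with the matching supplied by Assumptions~1, 3 and 4---is the same strategy the paper uses (its Section~6 proof adapts the generalized four moment theorem technique of \cite{JiangBai2018} to the two-array Fisher setting), and Steps~1 and~2 are sound in outline. The genuine gap is in Step~3, where you assert that ``the first three terms cancel exactly'' because the arrays share mean zero, unit variance and matching pseudo-variance. Only the first- and second-order Taylor terms cancel: nothing in Assumptions~1--5 matches the \emph{third} moments of $(\mathbf X,\mathbf Y)$ with those of $(\mathbf W,\mathbf Z)$ (the paper's own simulations compare Gaussian entries, whose third moment vanishes, with Rademacher and rescaled $t(4)$ entries, so you cannot assume it away). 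The cubic terms therefore survive every swap and must be shown to be negligible in aggregate. This is not routine bookkeeping---it is the structural heart of why the theorem needs only two exactly matched moments plus the weighted fourth-moment condition: the entries enter $\boldsymbol\Omega_{M}$ through centered quadratic and sesquilinear forms whose off-diagonal odd-moment contributions vanish by independence and mean zero, while the diagonal cubic contributions carry eigenvector weights of the type $\sum_t|u_{ti_1}u_{ti_2}u_{ti_3}|$ and resolvent coefficients of order $n^{-3/2}$ that must be summed over all $O(pn_1+pn_2)$ replacements and beaten against the $\sqrt p$ normalization of $\boldsymbol\Omega_{M}$. Without this estimate the telescoping sum does not close.

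A second, smaller defect: in the $\mathbf Y$-swap a single entry $y_{tj}$ appears not only in the outer factors $\mathbf V_1^*\mathbf Y\mathbf Y^*\mathbf V_2$ but also inside $\mathbf Q=\mathbf V_2^*\tilde{\mathbf S}_2\mathbf V_2$, hence inside $\mathbf Q^{-1/2}$ and inside $\tilde{\mathbf F}$ itself, so the dependence of $\boldsymbol\Omega_{M,2}$, $\boldsymbol\Omega_{M,4}$ and $\boldsymbol\Omega_{M,5}$ on that entry is a rational function of degree strictly greater than two; your claim of a ``rational function of degree two'' via Sherman--Morrison is therefore false as stated. The Cauchy-integral treatment of $\mathbf Q^{-1/2}$ you propose is the right tool, but the higher-degree dependence must be carried through the derivative bounds and the fourth-order remainder, and the paper's actual argument is organized precisely to keep these nested dependencies under control.
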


The proof of Theorem~\ref{thm2} is  given in Section 6.  
According to  Theorem \ref{thm2}, we may assume that $\mathbf X$ and $\mathbf Y$ are consist of entries with {\rm i.i.d.} {\color{blue}standard normal} variables in deriving the limiting distributions of $\mathbf B_{1}(l_{p,j})$, $\mathbf B_{2}(l_{p,j})$ and  $\boldsymbol\Omega_M(\psi_{n,k},\mathbf X,\mathbf Y)$.
Firstly,
define $m_2(\lambda)=\int {1}/{(\lambda-x)^2} \md \tilde{F}(x)$,
    $\underline{m}_2(\lambda)=\int {1}/{(\lambda-x)^2} \md \underline{\tilde F}(x)$,
  where $\tilde{F}(x)$   and $\underline{\tilde F}(x)$  are  the limiting spectral distributions of  the matrices $\tilde {\mathbf F}$ and $\underline{ \tilde {\mathbf F}}$, respectively.
Then, by the formula $\mathbf A^{-1}- \mathbf B^{-1}=\mathbf A^{-1}(\mathbf B-\mathbf A)\mathbf B^{-1}$ for any two invertible square matrices $\mathbf A$ and $\mathbf B$, we obtain that
 \begin{align*}
\mathbf B_{1}(l_{p,j})
&=\!\frac 1{\sqrt{p}}\!\gamma_{kj}\Big\{ c_{2}\psi^3_{n,k}m_2(\psi_{n,k}) +2 c_2\psi^2_{n,k}m(\psi_{n,k})\Big\}\mathbf I_{M}+
o(\frac {\psi_{n,k}}{\sqrt{p}});\\
\mathbf B_{2}(l_{p,j})
&=\!\frac 1{\sqrt{p}}\!\gamma_{kj}\Big\{\psi^2_{n,k}\um_2(\psi_{n,k})+\psi_{n,k} \um(\psi_{n,k})
\Big\}\mathbf D_1+
o(\frac {\psi_{n,k}}{\sqrt{p}})
\end{align*}
 Thus, it follows from the  equation (\ref{IBJ})
 \begin{align}
0&=\bigg|\{\psi_{n,k}+c_{2}\psi^2_{n,k}m(\psi_{n,k})\}\bI_M+\psi_{n,k}\um(\psi_{n,k})\mathbf D_1\non
& +\frac 1{\sqrt{p}}\gamma_{kj}\bigg[
\big\{ \psi_{n,k}+c_{2}\psi^3_{n,k}m_2(\psi_{n,k}) +2 c_2\psi^2_{n,k}m(\psi_{n,k})\big\}\mathbf I_{M}\non
&+\big\{\psi^2_{n,k}\um_2(\psi_{n,k})+\psi_{n,k} \um(\psi_{n,k})\big\}\mathbf D_1\bigg]+\frac{\psi_{n,k}}{\sqrt{p}}\boldsymbol\Omega_M(\psi_{n,k},\mathbf X,\mathbf Y)+o(\frac {\psi_{n,k}}{\sqrt{p}}) \bigg|.
\label{eigeneq8}
\end{align}

Furthermore,  the limiting distribution of $\boldsymbol\Omega_M(\psi_{n,k},\mathbf X,\mathbf Y)$  is derived in the following
  corollary by replacing  the  entries in $\mathbf X$ and $\mathbf Y$ with the {\rm i.i.d.} {\color{blue}standard normal} variables.
 The detailed  proof is in Section~\ref{SOmega}.

\begin{corollary}\label{coro1}
Suppose that  both $\mathbf X$ and $\mathbf Y$ satisfy the Assumptions~1-5, 
and let
\begin{align}
&\theta_k\!=\!c_2\!+\! c_2^2\psi_{k}^2 m_2(\psi_{k})\!+\!2c_2^2\psi_km(\psi_{k})\!+\! c_1\alpha_k^2\um_2(\psi_{k})\!+\! 2c_1 c_2 \alpha_k m_3(\psi_{k}),\label{theta}
\end{align}
where  $m_3(\lambda)=\int x/{(\lambda-x)^2} \md \tilde {F}(x)$.
Then, it holds that
 $\boldsymbol\Omega_M(\psi_{n,k}, \mathbf X,\mathbf Y)$ tends to a limiting distribution of an $M\times M$ Hermitian matrix $\boldsymbol\Omega_{\psi_{k}}$,  where
 ${\theta_k}^{-1/2}\left[\boldsymbol\Omega_{\psi_{k}}\right]_{kk}$ is  Gaussian Orthogonal Ensemble (GOE) for the real case, with
the entries above the diagonal being ${\rm i.i.d.} \mathcal{N}(0,1)$ and the entries on the diagonal being ${\rm i.i.d.} \mathcal{N}(0,2)$.
For the complex case, the ${\theta_k}^{-1/2}\left[\boldsymbol\Omega_{\psi_{k}}\right]_{kk}$ is
Gaussian Unitary Ensemble (GUE),
whose diagonal entries  are  {\rm i.i.d.} real $ \mathcal{N}(0,1)$, and the off diagonal entries are {\rm i.i.d.} complex $\mathcal{CN}(0,1)$.
\end{corollary}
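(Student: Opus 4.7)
The plan is to first invoke Theorem~\ref{thm2} to replace $(\mathbf X,\mathbf Y)$ by a matched pair with i.i.d.\ standard Gaussian entries having the same $\mu_1,\mu_2$ (and vanishing second non-conjugated moment in the complex case). Under Gaussianity, the orthogonal/unitary invariance of the joint law of $\mathbf X$ and $\mathbf Y$ makes the four blocks $\mathbf U_1^*\mathbf X$, $\mathbf U_2^*\mathbf X$, $\mathbf V_1^*\mathbf Y$, $\mathbf V_2^*\mathbf Y$ jointly independent Gaussian matrices with i.i.d.\ standard Gaussian entries. Since $\mathbf Q$, $\tilde{\mathbf F}$ and $\underline{\tilde{\mathbf F}}$ depend only on $\mathbf U_2^*\mathbf X$, $\mathbf V_2^*\mathbf Y$ and the bounded deterministic matrix $\mathbf D_2$, conditioning on these two blocks turns each $\boldsymbol\Omega_{M,j}$ into a quadratic or bilinear form in the \emph{fresh} Gaussian blocks $\mathbf U_1^*\mathbf X$ and $\mathbf V_1^*\mathbf Y$, a structure to which Gaussian CLT tools apply directly.

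Next I would analyse the five pieces separately. $\boldsymbol\Omega_{M,1}$ is $\sqrt{p}$ times a centred sample-covariance matrix built from the $n_2$ standard-Gaussian columns of $\mathbf V_1^*\mathbf Y$, so the classical multivariate CLT yields a GOE/GUE limit with per-entry variance proportional to $c_2$. For $\boldsymbol\Omega_{M,2}$ and $\boldsymbol\Omega_{M,3}$ I would apply the Gaussian quadratic-form CLT (Isserlis/Wick theorem) conditionally on $(\mathbf U_2^*\mathbf X,\mathbf V_2^*\mathbf Y)$, then replace the conditional traces by their deterministic equivalents $m(\psi_k),\underline m(\psi_k),m_2(\psi_k),\underline m_2(\psi_k)$; this produces the contributions $c_2^2\psi_k^2 m_2(\psi_k)+2c_2^2\psi_k m(\psi_k)$ and $c_1\alpha_k^2\underline m_2(\psi_k)$ to $\theta_k$. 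The cross-terms $\boldsymbol\Omega_{M,4},\boldsymbol\Omega_{M,5}$ are bilinear in the two \emph{independent} Gaussian blocks $\mathbf U_1^*\mathbf X$ and $\mathbf V_1^*\mathbf Y$; conditioning on $(\mathbf U_2^*\mathbf X,\mathbf V_2^*\mathbf Y)$ and integrating out one block at a time reveals that they are asymptotically centred Gaussian with combined variance $2c_1c_2\alpha_k m_3(\psi_k)$.

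To obtain the joint Gaussian limit I would use the Cram\'er--Wold device: for any linear combination of the five pieces, apply the Gaussian CLT for mixed quadratic/bilinear forms, then transfer the conditioning to the limit. Independence between $\mathbf U_1^*\mathbf X$ and $\mathbf V_1^*\mathbf Y$ kills all asymptotic cross-covariances between pieces depending on only one of these two blocks; the bilinear pieces produce no covariance with the purely quadratic ones because integration against the missing block gives an odd-order Gaussian moment. Summing the five individual variances then yields exactly the expression for $\theta_k$ in~(\ref{theta}). The GOE/GUE structure -- diagonal variance twice the off-diagonal one in the real case, and $\rE[\omega_{ab}^2]=0$ for $a\ne b$ in the complex case -- is inherited from Wick's theorem and, in the complex case, from $\rE x_{ij}^2=\rE y_{ij}^2=0$ in Assumption~1.

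The main obstacle is the algebraic bookkeeping showing that the separate conditional variance contributions combine, via~(\ref{0eqa}) and the derivative identities linking $m,\underline m,m_2,\underline m_2,m_3$ at $\psi_k$, into the single compact expression $\theta_k$; in particular, one must verify that no spurious cross-covariances between $\boldsymbol\Omega_{M,2},\boldsymbol\Omega_{M,3},\boldsymbol\Omega_{M,4},\boldsymbol\Omega_{M,5}$ survive after the deterministic-equivalent step. A secondary technical point is the replacement of $\psi_{n,k}$ by $\psi_k$ inside the Stieltjes transforms, which is justified by the smoothness of $\psi_n$ together with the assumed convergences $c_{n_j}\to c_j$ and $H_n\to H$ occurring on a scale faster than the $1/\sqrt{p}$ fluctuations we are extracting.
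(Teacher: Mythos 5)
Your overall strategy is the same as the paper's: invoke Theorem~\ref{thm2} to replace $(\mathbf X,\mathbf Y)$ by Gaussian arrays, use unitary invariance to make $\mathbf U_1^*\mathbf X,\mathbf U_2^*\mathbf X,\mathbf V_1^*\mathbf Y,\mathbf V_2^*\mathbf Y$ independent standard Gaussian blocks, condition on the ``bulk'' blocks that determine $\mathbf Q$, $\tilde{\mathbf F}$, $\underline{\tilde{\mathbf F}}$, and treat each $\boldsymbol\Omega_{M,j}$ as a quadratic or bilinear form in the fresh blocks. The identification of the individual contributions $c_2$, $c_1\alpha_k^2\um_2(\psi_k)$ and $2c_1c_2\alpha_k m_3(\psi_k)$ (from $\boldsymbol\Omega_{M,1}$, $\boldsymbol\Omega_{M,3}$ and $\boldsymbol\Omega_{M,4}+\boldsymbol\Omega_{M,5}$ respectively), and the argument that the bilinear pieces are asymptotically uncorrelated with the quadratic ones and with each other, are all correct.

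There is, however, one concrete error in the variance bookkeeping. You attribute $c_2^2\psi_k^2m_2(\psi_k)+2c_2^2\psi_k m(\psi_k)$ entirely to $\boldsymbol\Omega_{M,2}$ and then assert that $\theta_k$ is obtained by ``summing the five individual variances,'' checking only that no cross-covariances among $\boldsymbol\Omega_{M,2},\dots,\boldsymbol\Omega_{M,5}$ survive. This cannot close: writing $r_i$ for the $i$th row of $\mathbf V_1^*\mathbf Y$ and $\mathbf M=\mathbf Y^*\mathbf V_2\mathbf Q^{-1/2}(\lambda\mathbf I-\tilde{\mathbf F})^{-1}\mathbf Q^{-1/2}\mathbf V_2^*\mathbf Y$, the off-diagonal entries of $\boldsymbol\Omega_{M,1}$ and $\boldsymbol\Omega_{M,2}$ are the quadratic forms $(\sqrt{p}/n_2)\,r_i r_j^*$ and $-(\sqrt{p}\lambda/n_2^2)\,r_i\mathbf M r_j^*$ in the \emph{same} fresh block $\mathbf V_1^*\mathbf Y$, with kernels $\mathbf I$ and $\mathbf M$. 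Wick's formula gives a conditional variance of $\boldsymbol\Omega_{M,2}$ alone equal to
\begin{equation*}
\frac{p\lambda^2}{n_2^4}\,\mathrm{tr}(\mathbf M\mathbf M^*)\;\longrightarrow\;c_2^2\psi_k^2m_2(\psi_k),
\end{equation*}
with \emph{no} $2c_2^2\psi_k m(\psi_k)$ term, while the cross-covariance of the two forms is proportional to $\mathrm{tr}\,\mathbf M=n_2\,\mathrm{tr}(\lambda\mathbf I-\tilde{\mathbf F})^{-1}$ and contributes exactly the missing $2c_2^2\psi_k m(\psi_k)$ to $\theta_k$. So the five pieces are \emph{not} asymptotically uncorrelated; the term you need survives precisely in the pair $(\boldsymbol\Omega_{M,1},\boldsymbol\Omega_{M,2})$, the one pair you excluded from your list of cross-covariances to verify. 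The fix is routine (compute the full covariance matrix of the joint quadratic/bilinear form via the Cram\'er--Wold device rather than variances piecewise), and the GOE/GUE block structure is unaffected since the cross term also satisfies the diagonal-equals-twice-off-diagonal relation. A minor secondary point: the paper explicitly does \emph{not} assume $c_{n_j}\to c_j$ and $H_n\to H$ converge quickly --- that is the reason $\psi_{n,k}$ rather than $\psi_k$ is used as the centering --- so your justification for swapping $\psi_{n,k}$ and $\psi_k$ should be restricted to the variance $\theta_k$, where it is harmless.
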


\begin{remark}
If the Assumption~4 
is weaken to the following  Assumption~4', 
\bqn
\beta_{x,i_1j_1i_2j_2}&\!\!=\!\!&\lim \sum_{t=1}^p\!\bar{u}_{ti_1}\!u_{tj_1}\!u_{ti_2}\!\bar u_{tj_2}\!\big[\rE\{|x_{11}|^4\!\delta(|x_{11}|\!\le\! \sqrt{n_1})\}\!-\!2\!-\!q\big] \!<\! \infty,\\
\beta_{y,i_1j_1i_2j_2}&\!\!=\!\!&\lim \sum_{t=1}^p\!\bar v_{ti_1}\!v_{tj_1}\!v_{ti_2}\!\bar v_{tj_2}\!\big[\rE\{|y_{11}|^4\delta(|y_{11}|\!\le\! \sqrt{n_2})\}\!-\!2\!-\!q\big]\!< \!\infty,
\eqn
where $q=1$ for real case and 0 for complex,
${\mathbf u}_i=(u_{1i}, \ldots, u_{pi})'$  is the $i$th column of the matrix $\mathbf U_1$ and
${\mathbf v}_j=(v_{1j}, \ldots, v_{pj})'$ is the $j$th column of the matrix $\mathbf V_1$,
then all the conclusions of Corollary~\ref{coro1} still holds, but the limiting distribution of  $\boldsymbol\Omega_M(\psi_{n,k}, \mathbf X,\mathbf Y)$ turns to
 an $M\times M$ Hermitian matrix $\boldsymbol\Omega_{\psi_{k}}=(\omega_{ij})$,  which has the independent Gaussian entries of
 mean 0 and variance
\[{\rm Cov}(\omega_{i_1,j_1},\omega_{i_2,j_2})=\left\{
\begin{array}{ll}
(q+1)\theta_k+{\beta_{x,iiii}}\nu_{1}+{\beta_{y,iiii}}\nu_{2},  &    i_1=j_1=i_2=j_2=i; \\
\theta_k+{\beta_{x,ijij}}\nu_1+{\beta_{y,ijij}}\nu_2,  &      i_1=i_2=i\neq j_1=j_2=j;\\
{\beta_{x,i_1j_1i_2j_2}}\nu_1+{\beta_{y,i_1j_1i_2j_2}}\nu_2,  &  \mbox{other cases}.
\end{array}
\right.\]
Here  $\theta_k$ is defined in (\ref{theta}),
$\nu_1=c_1\alpha_k^2\um^2(\psi_{k})$
and
$\nu_2=c_2\big\{1+c_2\psi_km(\psi_k)\big\}^2$.
 \label{rmkwij}
  \end{remark}

The proof of this remark is given in Section~\ref{SOmega2}.
In the case of Assumption~4',  {a partial invariance principle theorem} may be required in the calculation of
Remark~\ref{rmkwij}, which only  replaces   $\mathbf U_2^*\mathbf X,\mathbf V_2^*\mathbf Y$ by  $\mathbf U_2^*\mathbf W$ and $\mathbf V_2^*\mathbf Z$ as column to column, respectively,  but keeps $\mathbf U_1^*\mathbf X,\mathbf V_1^*\mathbf Y$ unchanged.
Due to space constraints, we opt to omit the details of the
partial invariance principle theorem  here, and these can be further refined in future
work. This remark is used in the simulations of Case I under
non-Gaussian assumptions.


\section{CLT for generalized spiked Fisher matrices}  
 \label{SecCLT}

We employ the invariance principle to show the CLT for the  spiked eigenvalues  of
a generalized spiked Fisher matrix $\mathbf F$.
As mentioned in Proposition~\ref{PT}, a packet of $m_k$ consecutive sample
eigenvalues $\{l_{p,j}(\mathbf F), j \in \mathcal{J}_k\}$ converge to a limit $\rho_k$ laying
outside the support of the limiting spectral distribution (LSD) 
of $\mathbf F$.
To improve the CLT in \cite{WangYao2017}, we consider a more general spiked
Fisher matrix, $\bf F$, in (\ref{F}) and the renormalized random vector.
\be
\gamma_k=(\gamma_{kj}, {j \in \mathcal{J}_k}):=~\left(\sqrt{p} \big\{\frac{l_{p,j}(\mathbf F)}{\psi_n(\alpha_k)}-1\big\} , j \in \mathcal{J}_k \right).\label{mrv}
\ee
Then, the CLT for the  renormalized random vector   
$\gamma_k$   is provided in the following  theorem. 

\begin{theorem}
Suppose that  Assumptions~1--5  
hold. For each distinct  spiked eigenvalue
$\alpha_k$ (i.e,  $\psi'(\alpha_k)>0$, \cite{Jiangetal2019}) 
with multiplicity $m_k$, the $m_k$-dimensional real vector
 $\gamma_k$ defined in
(\ref{mrv})
converges  weakly to the joint  distribution  of the $m_k$ eigenvalues of Gaussian random matrix  $-\left[\boldsymbol\Omega_{\psi_{k}}\right]_{kk}/{\phi_k}$,
where
$
\phi_k =1+c_{2}\psi^2_{k}m_2(\psi_{k}) +2 c_2\psi_{k}m(\psi_{k})+\alpha_k\psi_{k}\um_2(\psi_{k})+\alpha_k \um(\psi_{k})
$
and  $\psi_k^2m_2(\psi_k)$ is the limit of $\psi_{n,k}^2m_2(\psi_{n,k})$ even if $\alpha_k\to\infty$.
Furthermore, $\boldsymbol\Omega_{\psi_{k}}$ is defined in Corollary~{\ref{coro1}}  and
$\left[\boldsymbol\Omega_{\psi_{k}}\right]_{kk}$ is the $k$th diagonal block of  $\boldsymbol\Omega_{\psi_{k}}$ corresponding to the indices $\{i,j \in \mathcal{J}_k\}$.
\label{CLT}
\end{theorem}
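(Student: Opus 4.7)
The plan is to read off the CLT directly from the determinantal identity (\ref{eigeneq8}), which already organizes all the relevant limiting objects. I view the $M\times M$ matrix inside that determinant as partitioned according to the spike groups $\mathcal{J}_1,\ldots,\mathcal{J}_K$ inherited from the block structure $\mathbf{D}_1=\mathrm{diag}(\alpha_1\mathbf{I}_{m_1},\ldots,\alpha_K\mathbf{I}_{m_K})$. For a sample spike $l_{p,j}$ with $j\in\mathcal{J}_k$, my goal is to show that the determinant factorizes asymptotically as a product over diagonal blocks, that only the $(k,k)$ block is small, and that the vanishing of (\ref{eigeneq8}) therefore reduces to an $m_k\times m_k$ linear eigenvalue equation for the components of $\gamma_k$.

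The first key step is the cancellation on the $(k,k)$ diagonal block. There the deterministic leading piece equals $\{\psi_{n,k}+c_{2}\psi_{n,k}^{2}m(\psi_{n,k})+\alpha_k\psi_{n,k}\um(\psi_{n,k})\}\mathbf{I}_{m_k}$, which is $o(1)$ by the characteristic identity (\ref{0eqa}). Substituting $\mathbf{D}_1|_{(k,k)}=\alpha_k\mathbf{I}_{m_k}$ into the bracket that multiplies $\gamma_{kj}/\sqrt{p}$ collapses that bracket to $\psi_{n,k}\phi_k\mathbf{I}_{m_k}+o(1)$, which is precisely where the coefficient $\phi_k$ of the theorem arises. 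The contributions of $\mathbf{B}_1(l_{p,j})$ and $\mathbf{B}_2(l_{p,j})$ are $o(\psi_{n,k}/\sqrt{p})$ after writing $l_{p,j}=\psi_{n,k}(1+\gamma_{kj}/\sqrt{p})$ and using the Stieltjes identities for $m_2$ and $\um_2$ given just before Corollary~\ref{coro1}, while the noise term $(\psi_{n,k}/\sqrt{p})\boldsymbol\Omega_M$ restricted to the $(k,k)$ block tends to $(\psi_k/\sqrt{p})[\boldsymbol\Omega_{\psi_k}]_{kk}$ by Theorem~\ref{thm2} together with Corollary~\ref{coro1}.

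For the other diagonal blocks $(k',k')$ with $k'\neq k$, the same deterministic combination produces $\psi_{n,k}\um(\psi_{n,k})(\alpha_{k'}-\alpha_k)\mathbf{I}_{m_{k'}}$, which is $O(1)$ and invertible because the phase-transition hypothesis $\psi'(\alpha_k)>0$ together with the distinctness of the spikes keeps these blocks bounded away from zero. All off-diagonal blocks are of order $1/\sqrt{p}$, so a Schur-complement expansion absorbs them into a negligible correction and reduces (\ref{eigeneq8}) to
\begin{equation*}
\Bigl|\phi_k\,\gamma_{kj}\mathbf{I}_{m_k}+\bigl[\boldsymbol\Omega_{\psi_k}\bigr]_{kk}+o(1)\Bigr|=0.
\end{equation*}
Since this single matrix equation must hold for every $j\in\mathcal{J}_k$ and since Corollary~\ref{coro1} identifies $[\boldsymbol\Omega_{\psi_k}]_{kk}$ as a $\sqrt{\theta_k}$-scaled GOE (or GUE) block, the $m_k$ solutions $\{\gamma_{kj}:j\in\mathcal{J}_k\}$ converge jointly to the $m_k$ eigenvalues of $-[\boldsymbol\Omega_{\psi_k}]_{kk}/\phi_k$.

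The main obstacle is the uniform-in-$j$ linearization of $\mathbf{B}_1(l_{p,j})$ and $\mathbf{B}_2(l_{p,j})$ about $\psi_{n,k}$ with a genuine $o(\psi_{n,k}/\sqrt{p})$ remainder; this rests on short-range concentration for the Stieltjes transforms $m,\um,m_2,\um_2$ of $\tilde{\mathbf{F}}$ and $\underline{\tilde{\mathbf{F}}}$ at arguments in a $1/\sqrt{p}$-window of $\psi_k$. A secondary delicate point is the case $\alpha_k\to\infty$, where the individual terms making up $\phi_k$ diverge but the combinations $\psi_k^{2}m_2(\psi_k)$ and $\alpha_k\psi_k\um_2(\psi_k)$ must be shown to remain finite; this is precisely why $\psi_{n,k}$, and not its limit $\psi_k$, has to be used in the centering of $\gamma_{kj}$, exactly as flagged after Proposition~\ref{PT}.
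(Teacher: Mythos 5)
Your proposal is correct and follows essentially the same route as the paper: both start from the determinantal identity (\ref{eigeneq8}), use the characteristic equation (\ref{0eqa}) to cancel the leading term on the $(k,k)$ block and to keep the $(k',k')$ blocks ($k'\neq k$) bounded away from zero, and then reduce to the $m_k\times m_k$ equation $|\phi_k\gamma_{kj}\mathbf I_{m_k}+[\boldsymbol\Omega_{\psi_k}]_{kk}+o(1)|=0$. Your explicit Schur-complement reduction is just the worked-out version of the paper's appeal to the $p^{1/4}$ row/column rescaling and Lemma~4.1 of \cite{BaiMiaoRao1991}, and your identification of $\phi_k$ and of the limiting block via Corollary~\ref{coro1} matches the paper's argument.
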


\begin{proof}
As shown in Section~\ref{Pre}, every sample spiked eigenvalue of $\mathbf F$, $l_{p,j}, j \in \mathcal{J}_k, k=1,\cdots, K$, 
satisfies the equation (\ref{eigeneq8}). 
Furthermore,
since $\psi_{n,k}$ satisfies the equation 
(\ref{0eqa}),
it means that the population spiked eigenvalues $\alpha_u$ in the $u$th diagonal block of $\mathbf D_1$ makes
$\psi_{n,k}+c_{2} \psi^2_{n,k}m(\psi_{n,k}) + \psi_{n,k}\um(\psi_{n,k}) \alpha_u$
keep away from 0, if $u\neq k$;
and satisfies
$\psi_{n,k}+c_{2}\psi^2_{n,k}m(\psi_{n,k})+\psi_{n,k}\um(\psi_{n,k}) \alpha_k=0$.  For non-zero limit of spiked eigenvalue, $\psi_{n,k}$, each $k$th diagonal block of the  equation (\ref{eigeneq8}) is  multiplied $p^{{1}/{4}}$  by rows and columns, respectively. Then, by Lemma~4.1 in  
\cite{BaiMiaoRao1991},
it follows from equation (\ref{eigeneq8}) that
\begin{align}
&\bigg|\gamma_{kj}
\psi_{n,k}\!\big\{\! 1\!\!+\!\!c_{2}\psi^2_{n,k}m_2(\psi_{n,k}) \!\!+\!\!2 c_2\psi_{n,k}m(\psi_{n,k})\!\!+\!\!\alpha_k\psi_{n,k}\um_2(\psi_{n,k})\!\!+\!\!\alpha_k \um(\psi_{n,k})\!\big\}\!\mathbf I_{m_k}\label{clteq}\\
&\!+\!\psi_{n,k}\big[\boldsymbol\Omega_M(\psi_{n,k},\mathbf X,\mathbf Y)\big]_{kk}+o(\psi_{n,k}) \bigg|=0,\nonumber
\end{align}
where $\left[~\cdot ~\right]_{kk}$ is  the $k$th diagonal block of  a matrix  corresponding to the indices $\{i,j \in \mathcal{J}_k\}$.
According to the  Skorokhod strong representation in 
 \cite{Skorokhod1956, HuBai2014},
it follows  that the  convergence
of $\boldsymbol\Omega_M(\psi_{n,k},\mathbf X,\mathbf Y)$  and (\ref{eigeneq8}) can be achieved simultaneously in probability 1 by choosing an appropriate probability space.

Let
$
\phi_k =1+c_{2}\psi^2_{n,k}m_2(\psi_{n,k}) +2 c_2\psi_{n,k}m(\psi_{n,k})+\alpha_k\psi_{n,k}\um_2(\psi_{n,k})+\alpha_k \um(\psi_{n,k}). 
$
The equation (\ref{clteq}) arrives at
$\Big|\gamma_{kj} \phi_k  \mathbf I_{m_k}
+\big[\boldsymbol\Omega_{\psi_{k}}\big]_{kk}+o(1)
 \Big|  =0.$
Thus, it is obvious that, $\gamma_{kj}$ asymptotically satisfies the following equation
\begin{equation}
\Big|\gamma_{kj} \cdot \phi_k  \mathbf I_{m_k}
+\big[\boldsymbol\Omega_{\psi_{k}}\big]_{kk}
 \Big|  =0,\label{fineq0}
 \end{equation}
 where $\boldsymbol\Omega_{\psi_{k}}$ is an $M\times M$ Hermitian matrix, being  the limiting distribution of
 $\boldsymbol\Omega_M(\psi_{n,k},\mathbf X,\mathbf Y)$.

Therefore,  by the equation (\ref{fineq0}), the $m_k$-dimensional real vector
$\{\gamma_{kj}, j\in \mathcal{J}_k\}$ converges  weakly to the distribution  of  the $m_k$ eigenvalues of the Gaussian random matrix
$-\left[\boldsymbol\Omega_{\psi_{k}}\right]_{kk}\big/{\phi_k} $
for each distinct spiked eigenvalue. The   distribution of $\boldsymbol\Omega_{\psi_{k}}$ is  detailed in Corollary~\ref{coro1}.
Then, the CLT for each distinct spiked eigenvalue of a
generalized spiked Fisher matrix is established.
\end{proof}

\begin{remark}
Suppose that  $\mathbf X, \mathbf Y$ satisfy Assumptions~1,2,3 and 5,
but Assumption~4 is weakened to  Assumption~4' in Remark~\ref{rmkwij}. Then
all the conclusions of Theorem~\ref{CLT} still hold, but the limiting distribution of
$\boldsymbol\Omega_M(\psi_{n,k}, \mathbf X,\mathbf Y)$ tends to
an $M\times M$ Hermitian Gaussian matrix $\boldsymbol\Omega_{\phi_{k}}=(\omega_{st})$ whose variances and covariances are
defined in Remark~\ref{rmkwij}.\\[-5mm]
\label{rmkclt}
\end{remark}

\section{Applications}\label{sec5.1}


{\color{blue}In this section, we present two applications of  Theorem~\ref{CLT} in the  linear regression model and  statistical signal processing.
First one is to derive the   theoretical power of  Roy Maximum Root test in linear regression model,
the other is applied to the signal detection in wireless communication.}
\subsection{Linear regression model}\label{sec5.1.1}

Consider  a $p$-dimensional linear regression model
\begin{equation} \label{linear model}
  \mathbf w_i=\mathbf B\mathbf z_i+\bvare_i,   i=1,\ldots,n,
\end{equation}
where $\bvare_i,$ $i=1,\cdots, n$ is a sequence of independent and identically distributed normal
error vector $\mathcal{N}_p(0,\boldsymbol\Sigma)$, $\mathbf B$ is  a
$p\times  q_0$ regression matrix, and $(\bz_i)_{i,\cdots,n}$ a sequence of known
regression variables of dimension $q_0$.
In this section, assume that $n\geq p+q_0$ and the rank of $\mathbf Z=
(\mathbf z_1, \cdots, \mathbf z_n)$ is $q_0$.

Define a block decomposition
 $\mathbf B= (\mathbf B_1, \mathbf B_2)$ with
$ q_1$ and $q_2$ columns, respectively ($q_0=q_1+q_2$).
Partition the
regression variables $\{\mathbf z_i\}$ accordingly as in
$\mathbf z_i=\left( \mathbf z_{i1}', \mathbf z_{i2}' \right)'$.

Consider to test hypothesis that
\begin{equation}
 \mathcal{H}_0 : ~\mathbf B_1  =  \mathbf B_{1}^0 \quad {\rm v.s. }  \quad  \mathcal{H}_1 : ~\mathbf B_1  \neq  \mathbf B_{1}^0~,
\label{HB0}
\end{equation}
where  $\mathbf B_1^0$ is a known matrix.
Roy in \cite{Roy1953} proposed $\lambda_{1}$, the maximum eigenvalue of $\mathbf H\mathbf G^{-1}$, to test on linear regression hypothesis (\ref{HB0}), where
\[\mathbf G=n\hat{\boldsymbol\Sigma}=\sum\limits_{i=1}^n
  (\mathbf w_i-\widehat{\mathbf B}\mathbf z_i)(\mathbf w_i-\widehat{\mathbf B}\mathbf z_i)'; \quad \mathbf H=(\widehat{\mathbf B}_1-\mathbf B_1^0)
\mathbf A_{11:2}(\widehat{\mathbf B}_1-\mathbf B_1^0)',\]
$\widehat{\mathbf B}$ is the maximum likelihood estimators of $\mathbf B$,
$\widehat{\mathbf B}_1$   denotes as the former $q_1$ columns of $\widehat{\mathbf B}$, and
  $\mathbf A_{11:2}= \sum_{i=1}^n \mathbf z_{i1} \mathbf z_{i1}'-\sum_{i=1}^n \mathbf z_{i1} \mathbf z_{i2}' \big( \sum_{i=1}^n \mathbf z_{i2} \mathbf z_{i2}'\big)^{-1} \sum_{i=1}^n \mathbf z_{i2} \mathbf z_{i1}'$.
The distribution of $\lambda_{1}$ can be obtained from the joint density  by the integration over the supporting set of all eigenvalues.
Roy in \cite{Roy1945} developed a method of integration  and gave the distribution of $\lambda_1/(1+\lambda_1)$ when $p=2$.
However, the integration
 is more difficult  than that for the density of the roots of $\mathbf H\mathbf G^{-1}$ with the
 increasing the dimensionality.
 By Lemmas~8.4.1 and 8.4.2 in \cite{Anderson2003},
 it is known that
$
\mathbf G \sim W_p(\boldsymbol\Sigma,n-q_0),~
\mathbf H \sim
W_p(\boldsymbol\Sigma,q_1),
$
and they are independent on each other under the Gaussian assumption.
So
 $(n-q_0)q_1^{-1}\mathbf H\mathbf G^{-1}$  can be viewed as a generalized spiked Fisher matrix.
Consider the large-dimensional setting
 \be
 \tilde c_{n_1}=p/q_1 \rightarrow \tilde c_1 \in (0, \infty),  ~\tilde c_{n_2}=p/(n-q_0) \rightarrow  \tilde c_2 \in (0, 1). \label{limitingScheme}
 \ee
{\color{black}
 As shown in  \cite{Hanetal2016}, the  largest root of $(n-q_0)q_1^{-1}\mathbf H\mathbf G^{-1}$,  $l_{p,1}=(n-q_0)q_1^{-1}\lambda_1$, follows the
 Tracy-Widom law under the null hypothesis.
 Its rejection region at the 0.05 significance level is
 \be\label{reject}
 \{l_{p,1}> \psi_0+\sigma_{\!tw}C_{0.95}\},
 \ee
where $h^2=\tilde c_1+\tilde c_2-\tilde c_1\tilde c_2$, $\psi_0=(1+h)^2(1-\tilde c_2)^{-2}$ is the limit of the largest root under the null hypothesis and
$C_{0.95}$ is the 95th percentile of the Tracy-Widom distribution.

The value of $\sigma_{\!tw}$ is given by several trigonometric equations in
\cite{Hanetal2016}. In order to give a simpler expression, \cite{WangYao2017}
provided a result according to \cite{Hanetal2016}.  Their result is only
related to one of sample sizes  and the  dimensionality. Thus, we compared the results of
the two and found that they were not the same. Therefore, we recompute the value of $\sigma_{\!tw}$ according to \cite{Hanetal2016} and provide a simpler expression as follows, which is identical with the one in \cite{Hanetal2016}, i.e.
\begin{align*}
&\sigma_{\!tw}^3=
\frac{\tilde c_1^2(\tilde c_1+h)^4(\tilde c_1+\tilde c_2)^6}
{(n-q_0+q_1)^2 h \tilde c_2^2\big\{(\tilde c_1+\tilde c_2)^2-\tilde c_2(\tilde c_1+h)^2\big\}^4}.
\end{align*}
}

Based on the  rejection  region (\ref{reject}),
 Theorem~\ref{CLT}  is applied to derive the asymptotic distribution of $l_{p,1}$ and provide the power function under
 the alternative hypothesis,  which is detailed  as below.
 \begin{theorem}\label{T4.1}
 For testing hypothesis (\ref{HB0}), if  the large-dimensional limiting scheme
 (\ref{limitingScheme}) holds, then the asymptotic distribution of the largest
 sample eigenvalue of $(n-q_0)q_1^{-1}\bH\bG^{-1}$, $l_{p,1}$, is
 \be
\Lambda_1=\sqrt{p}\Big(\frac{l_{p,1}}{\psi_{n,1}}-1\Big) \Big/ \sigma_1 \Rightarrow \mathcal{N}(0, 1), ~~ \text{under~} \mathcal{H}_1,
\label{AD}
\ee
 where $\sigma_1^2=2\theta_1/\phi_k^2$ for the general real case with Assumption~4 
 and  $\sigma_1^2=(2\theta_1+\beta_{x,iiii}\nu_1+\beta_{y,iiii}\nu_2 )/\phi_k^2$
for the  real case with Assumption~4' instead.
 Then, the power of Roy Maximum Root test on linear regression hypothesis is calculated by

\begin{align}
power=\Phi \bigg(-\frac{\sqrt{p}\big\{\psi_0+ \sigma_{\!tw}C_{0.95}-\psi_{n,1}\big\}}{\psi_{n,1}\sigma_1}\bigg),
\end{align}
where $ C_{0.95}$ is 95\% quantile of Tracy-Widom distribution and $\Phi$ is standard Gaussian distribution.
\end{theorem}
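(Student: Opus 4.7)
The plan is to recognise $(n-q_0)q_1^{-1}\mathbf{H}\mathbf{G}^{-1}$ as a generalised spiked Fisher matrix in the sense of Section~\ref{Pre}, apply the CLT of Theorem~\ref{CLT} to its largest root, and then convert the rejection event in (\ref{reject}) into a standard normal probability. Under $\mathcal{H}_1$ and Gaussian errors, $\mathbf{G}$ and $\mathbf{H}$ are independent Wishart matrices, with $\mathbf{G}$ central and $\mathbf{H}$ non-central of noncentrality proportional to $(\mathbf{B}_1-\mathbf{B}_1^0)\mathbf{A}_{11:2}(\mathbf{B}_1-\mathbf{B}_1^0)'$. This deterministic shift contributes a rank-$q_1$ perturbation of the effective population covariance driving $\mathbf{H}$, so the associated matrix $\mathbf{T}_p^{*}\mathbf{T}_p = \boldsymbol\Sigma_2^{-1/2}\boldsymbol\Sigma_1\boldsymbol\Sigma_2^{-1/2}$ is a finite-rank perturbation of the identity. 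Under a non-degenerate alternative its leading eigenvalue $\alpha_1$ has multiplicity one and (by assumption) lies in the super-critical regime $\psi'(\alpha_1)>0$; Assumptions~1--5 hold automatically for Gaussian data with $\mu_1=\mu_2=3$.

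First I would apply Theorem~\ref{CLT} with $k=1$ and $m_1=1$ to this spike. The theorem yields $\gamma_{11} = \sqrt{p}\,(l_{p,1}/\psi_{n,1}-1) \Rightarrow -\omega/\phi_1$, where $\omega := [\boldsymbol\Omega_{\psi_1}]_{11}$ is a centred scalar Gaussian. By Corollary~\ref{coro1}, $\omega$ has variance $2\theta_1$ in the real Gaussian case under Assumption~4; by Remark~\ref{rmkwij} the variance inflates to $2\theta_1 + \beta_{x,iiii}\nu_1 + \beta_{y,iiii}\nu_2$ under Assumption~4$^{\prime}$. Dividing by $\phi_1$ and normalising by $\sigma_1$ as defined in the statement gives $\Lambda_1 \Rightarrow \mathcal{N}(0,1)$ under $\mathcal{H}_1$, which is precisely (\ref{AD}).

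For the power, I would rewrite the rejection event (\ref{reject}) in terms of $\Lambda_1$:
\[
\{l_{p,1} > \psi_0 + \sigma_{\!tw} C_{0.95}\} \;=\; \left\{\Lambda_1 > \frac{\sqrt{p}\,(\psi_0 + \sigma_{\!tw} C_{0.95} - \psi_{n,1})}{\psi_{n,1}\sigma_1}\right\},
\]
then pass to the limit using (\ref{AD}) and the symmetry $1-\Phi(x)=\Phi(-x)$ to obtain the displayed power formula.

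The hard part is the identification step rather than Theorem~\ref{CLT} itself, which does the heavy lifting. One must carefully verify that the non-central Wishart $\mathbf{H}$ fits the centred sample-covariance framework of Section~\ref{Pre}, and that the largest sample root $l_{p,1}$ is indeed driven by $\alpha_1$ rather than by the top of the bulk. The phase transition condition $\psi'(\alpha_1)>0$ encodes detectability: when it fails, (\ref{AD}) degenerates into the null Tracy-Widom regime and the power tends to the nominal level rather than to a non-trivial limit.
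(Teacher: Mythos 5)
Your proposal is correct and follows essentially the same route as the paper: identify $(n-q_0)q_1^{-1}\mathbf{H}\mathbf{G}^{-1}$ as a generalized spiked Fisher matrix via the Wishart distributions and independence of $\mathbf{G}$ and $\mathbf{H}$, invoke Theorem~\ref{CLT} with $k=1$, $m_1=1$ to get \eqref{AD} with the variance from Corollary~\ref{coro1} (resp.\ Remark~\ref{rmkwij}), and convert the Tracy--Widom rejection region \eqref{reject} into the stated power via $1-\Phi(x)=\Phi(-x)$. The non-central-Wishart identification issue you flag is real but is glossed over by the paper as well, so your write-up is, if anything, slightly more careful than the source.
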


In practice,  $\psi_{n,1}, \sigma_1$ in the asymptotic
distribution (\ref{AD}) is  involved with the unknown population
largest  spike $\alpha_1$ and the LSD of
$(n-q_0)q_1^{-1}\bH\bG^{-1}$, so we
 provide some estimations to calculate the estimated test statistic
 $\hat \Lambda_1$ instead of $ \Lambda_1$ in  (\ref{AD}) as follows.

First, for the population spike $\alpha_1$,  it follows  by the
first equation in (\ref{0eqa}) that the equation
\[ 1+\tilde c_2 l_{p,1} m(l_{p,1})+\um(l_{p,1}) \alpha_1=0\]
holds  approximately.
So we get the estimation of $\alpha_1$,
\[
\hat \alpha_1= -\displaystyle\frac{1+\tilde c_2 l_{p,1} m(l_{p,1})}{\um(l_{p,1})}, 
\]
where $m(l_{p,1})$ and  $\um(l_{p,1})$ can be estimated by
\[
\hat m(l_{p,1})=\frac{1}{p-|\mathcal{J}_1|}\sum\limits_{i\notin \mathcal{J}_1}(l_{p,i}-l_{p,1})^{-1} \quad \text{and} \quad \hat \um (l_{p,1})=-\displaystyle\frac{1-\tilde c_1}{l_{p,1}}+\tilde c_1 \hat m(l_{p,1}), 
\]
respectively, with $r_{i}$ and $\mathcal{J}_1$ defined as
  $r_{i}=|l_{p,i}-l_{p,1}|/|l_{p,1}|$ and  $\mathcal{J}_1=\{i\in (1,\cdots, p): r_{i}\leq 0.2 \}$.  The set $\mathcal{J}_1$ is  selected to avoid  the effect of multiple roots and make the estimator more accurate.
  The constant 0.2 is a more suitable threshold value according to our simulated results.
Moreover, the following estimators  may be used  to calculate
$\psi_{n,1}$ and $\sigma_1$.
\begin{align*}
&\hat\psi_{n,1}=\psi(\hat \alpha_1);\\
& \hat m(\psi_{n,1})=\frac{1}{p}\sum\limits_{ i\in \mathcal{J}_1}(l_{p,i}-\hat \psi_{n,1})^{-1};~ \hat \um(\psi_{n,1})=-\displaystyle\frac{1-\tilde c_1}{\hat\psi_{n,1}}+\tilde c_1 \hat m(\hat\psi_{n,1});\non[-2mm]
&\hat m_2(\psi_{n,1})=\frac{1}{p}\sum\limits_{ i\in \mathcal{J}_1}(l_{p,i}-\hat\psi_{n,1})^{-2};~
\hat \um_2(\psi_{n,1})=\displaystyle\frac{1-\tilde c_1}{(\hat\psi_{n,1})^2}+\tilde c_1 \hat m_2(\hat\psi_{n,1});\non[-2mm]
&\hat m_3(\psi_{n,1})=\frac{1}{p}\sum\limits_{ i\in \mathcal{J}_1}l_{p,i}(l_{p,i}-\hat\psi_{n,1})^{-2};
\end{align*}
where the integral over the $H(t)$ in (\ref{psik}) 
 can be estimated by the empirical spectral distribution.
Thus, the $\Lambda_1$ in (\ref{AD})  
 can be estimated by the above estimators.

{\color{blue}
\subsection{Signal detection}\label{app2}

 The literature \cite{Hanetal2016} established the Tracy-Widom law for the largest eigenvalue of the Fisher matrix and applied the results to the signal detection problem.
  In signal detection or cognitive, the model generally has the following form:
 \begin{align}
 {\mathbf y}_t={\mathbf A}{\mathbf x}_t + {\mathbf \Sigma}^{1/2}{\boldsymbol e}_t, \quad t=1,2,\cdots,m,
 \end{align}
 where ${\mathbf y}_t$ is a $p$-dimensional observations, ${\mathbf A}$ is a $p\times k$ mixing matrix,  ${\mathbf x}_t$ is a $k\times 1$ low-dimensional signal with covariance matrix ${\bI}_k$  while ${\boldsymbol e}_t$ is an i.i.d noise with covariance matrix ${\bI}_p$. The signal ${\mathbf x}_t$ is independent with the noise ${\boldsymbol e}_t$. For more details, see \cite{ZL2009,Levanon1988,NS2010}. A fundamental task in signal processing is to test
 \begin{align}
 \mathcal{H}_0 : ~\bf A  =  \bf 0 \quad {\rm v.s. }  \quad  \mathcal{H}_1 : ~\bA  \neq  \bf 0.
 \label{HA}
 \end{align}
 In engineering, one can have additional independent noise-only observations ${\mathbf z}_t={\mathbf \Sigma}^{1/2}{\mathbf r}_t,  t=1,\cdots,T$. Let

 \begin{align}
 {\mathbf Y}=({\mathbf y}_1,{\mathbf y}_2,\cdots,{\mathbf y}_m),\quad {\mathbf Z}=({\mathbf z}_1,{\mathbf z}_2,\cdots,{\mathbf z}_T).
 \end{align}
 We define the Fisher matrix
 \begin{align}
 {\mathbf F}=\frac{T}{m}({\mathbf Z}{\mathbf Z}^*)^{-1}({\mathbf Y}{\mathbf Y}^*),
 \end{align}
 and use the symbols $l_1$ and $\beta_1$ to denote the largest eigenvalue of ${\mathbf F}$ and ${\mathbf \Sigma}^{-1}({\mathbf A}{\mathbf A}^*+{\mathbf \Sigma})$, respectively. We use the statistic $l_1$ to test the hypothesis (\ref{HA}). According to \cite{Hanetal2016}, $l_1$ after scaling tends to Tracy-Widom law under the null hypothesis $\bf A  =  \bf 0$. By (\ref{AD}), we conclude the theoretical power for correlated noise detection as

 \begin{align}\label{spower}
 P_R(\beta_1)=\Phi \bigg(-\frac{\sqrt{p}\big\{\psi_0+ \sigma_{\!tw}C_{0.95}-\psi_{n,1}(\beta_1)\big\}}{\psi_{n,1}(\beta_1)\sigma_1}\bigg),
 \end{align}
 where the notations defined similar to Theorem \ref{T4.1}. It is worth pointing out that Theorem 7.1 in  \cite{WangYao2017} is a special case of (\ref{spower}).

}

%
%
%



\section{Simulation Study}\label{Sim}
We first conduct simulation to compare the performance of the
limiting distribution for the two-sample spiked model with the one
derived in \cite{WangYao2017}.

\subsection{Simulations for Section~\ref{SecCLT}}
We consider two scenarios:
\begin{description}
\item[ Case I:] The matrix $\bT_p^*\bT_p$ is taken to be  a finite-rank perturbation of
an identity matrix $\bI_p$, where $\boldsymbol\Sigma_2=\bI_p$ and $\boldsymbol\Sigma_1$ is an identity matrix with  the spikes $(20,0.2,0.1)$ of the multiplicity $(1,2,1)$ in the descending order and  thus $K=3$ and $M=4$ as proposed in  \cite{WangYao2017}.
\item[Case II:]  The matrix $\bT_p^*\bT_p$ is a general positive definite matrix,
but not necessary  with diagonal blocks independence assumption. 
It is designed as below:
$\boldsymbol\Sigma_2=\bI_p$ and $\boldsymbol\Sigma_1=\bU_0 \boldsymbol\Lambda \bU_0^*$, where  $\boldsymbol\Lambda$ is a diagonal matrix  consisting of
the spikes   $(20,0.2,0.1)$ with multiplicity $(1,2,1)$ and the other eigenvalues being 1 in the descending order.
Let $\bU_0$  be equal to the matrix composed of eigenvectors
 of the $p\times p$
matrix $(\rho^{|i-j|})_{i,j=1,\ldots,p}$
with $\rho=0.5$.
\end{description}

For each scenario, we first consider two populations as following: In the
first population, $x_{ij}$ and $y_{ij}$  are  both i.i.d. samples from $N(0,1)$.
In the second population, $x_{ij}$ and $y_{ij}$ are
i.i.d. samples from $P\{x_{ij}=\pm 1\}=P\{y_{ij}=\pm 1\}=1/2$.
Thus, $\rE|x_{ij}|^4=\rE|y_{ij}|^4=1$. This aims to illustrate the invariance principle of
large-dimensional spiked Fisher matrices.

To further demonstrate the CLT derived in Section 3 is valid for a distribution with infinite fourth moments
under Assumption~4, we generate i.i.d. samples $x_{ij}$ and $y_{ij}$  from
$2^{-1/2}t(4)$ population distribution under the setting of Case II,
hence, $\rE x_{ij}=\rE y_{ij}=0$, $\rE x_{ij}^2=\rE y_{ij}^2=1$, while the fourth moments of
$x_{ij}, y_{ij}$ are infinite. Since Assumptions~4' for the CLT derived in Section 3  is not met in the distribution
$t(4)$ under the setting of Case I, then the new CLT does not hold for $t(4)$ with Case I.
Furthermore, the limiting distribution for the two-sample spiked model
derived in \cite{WangYao2017} is not applicable for $t(4)$ either. Thus, we examine the performance
of the newly derived limiting distribution only.
In this simulation, we set $p=200$, $ n_1=1000$ and $n_2=400$, and we
conduct 1000 replications for each case.

\begin{table}
\caption{KS Statistic and Percentiles of asymptotical distributions  of the standardized $\hat{\gamma}_1$,
$\hat{\gamma}_2^*=(\hat{\gamma}_{21}+\hat{\gamma}_{22})/2$,
 and $\hat{\gamma}_3$ derived by our new method and Wang and Yao \cite{WangYao2017}}
\label{tab1}
\begin{tabularx}{14.5cm}{XXXcccccXXXXX}
\hline
E.V. & Case & Method & 1\% & 5\% & 10\% & 25\% & 50\% & 75\% & 90\% & 95\% & 99\% & KS\\
 & \multicolumn{2}{c} {Limiting $N(0,1)$} & -2.326 &-1.645 &-1.282 & -0.674 & 0 & 0.674 &1.282 &1.645 &2.326& $-$\\
\hline
&& \multicolumn{9}{c} {$x_{ij}\sim N(0,1)$ and $y_{ij}\sim N(0,1)$}\\
\hline
$\gamma_1$ & I  & New & -2.005 &-1.455 &-1.175 & -0.650 & -0.043 & 0.680 &1.400 &1.791 &2.606&0.025\\
 & & WY &  -2.047 &-1.487 &-1.200 & -0.665 & -0.045 & 0.694 &1.429 &1.828 &2.661&0.028\\
$\gamma_1$ & II  & New &  -1.996 &-1.540 &-1.191 & -0.658 & -0.009 & 0.671 &1.378 &1.775 &2.660&0.031\\
  & & WY & -1.975 &-1.524 &-1.178 & -0.652 & -0.009 & 0.663 &1.362 &1.755 &2.630&0.030\\
$\gamma^*_{2}$ & I  & New &  -1.493 &-1.017 &-0.779 & -0.257 & 0.265 & 0.814 &1.330 &1.631 &2.233&0.150\\
 & & WY & -1.500 &-1.005 &-0.762 & -0.226 & 0.313 & 0.880 &1.412&1.718 &2.351&0.162 \\
$\gamma^*_{2}$ &II  & New & -1.574 &-1.065 &-0.761 &-0.301&0.307 & 0.857& 1.301 &1.682 &2.386 &0.137  \\
  & & WY &  -1.456 &-0.925 &-0.624 &-0.160&0.452 & 1.007& 1.459 &1.838 &2.555 &0.194 \\
$\gamma_{3}$ & I  & New & -1.930&-1.502 &-1.183 & -0.609& -0.016 & 0.686 &1.302 &1.685 &2.769&0.022\\
 & & WY &  -1.802 &-1.36 &-1.038 & -0.455 & 0.147 & 0.862 &1.487 &1.878 &2.979&0.075\\
$\gamma_3$ & II  & New &  -1.954 &-1.554 &-1.253 & -0.703 & -0.052 & 0.562 &1.167 &1.609 &2.398&0.050\\
 & & WY &  -1.963 &-1.562 &-1.259 & -0.708 & -0.055 & 0.561 &1.167 &1.611 &2.405&0.051\\
\hline
 & &    \multicolumn{9}{c} {$P(x_{ij}=\pm 1)=1/2$ and $P(y_{ij}=\pm 1)=1/2$ }\\
\hline
$\gamma_1$ & I  & New &  -1.957 &-1.518 &-1.240 & -0.646 & -0.026 & 0.681 &1.363 &1.753 &2.694&0.025\\
 & & WY & -1.968 &-1.516 &-1.152 & -0.640 & -0.028 & 0.694 &1.391 &1.788 &2.749&0.026\\
$\gamma_1$ & II  & New & -2.019 &-1.484 &-1.187 & -0.648 & -0.007 & 0.637 &1.410 &1.823 &2.503&0.023\\
 & & WY &  -2.883 &-2.119 &-1.694 & -0.925 & -0.011 & 0.909 &2.011 &2.599 &3.572&0.093\\
$\gamma^*_{2}$ &I  & New & -1.229 &-0.812 &-0.554 & -0.029 & -0.509 &1.096&1.619 &1.925 &2.506&0.250 \\
  & & WY &  -0.378 &-0.021 &0.169 & 0.556 & 0.952 & 1.384 &1.768 &1.995 &2.430&0.491 \\
$\gamma^*_{2}$&II  & New &  -1.493 &-0.956 &-0.680 &-0.178&0.403 & 0.918& 1.476 &1.798 &2.438 &0.185 \\
  & & WY & -1.876 &-1.132&-0.757 &-0.072&0.721 & 1.417& 2.180 &2.629 &3.496 &0.267 \\
$\gamma_{3}$ &  I  & New &  -2.381 &-1.646 &-1.296 & -0.695 & -0.013 & 0.578 &1.114 &1.520 &2.217&0.047\\
 & & WY & -2.048 &-1.272 &-0.899 & -0.268 & -0.450 & 1.073 &1.639 &2.069 &2.802&0.175\\
$\gamma_3$ & II  & New &  -2.108 &-1.527 &-1.215 & -0.666 & 0.017 & 0.711 &1.357 &1.733 &2.615&0.026\\
 & & WY & -5.907 &-4.283 &-3.407 & -1.870 & 0.056 & 1.982 &3.789 &4.842 &7.308&0.236\\
\hline
 & &    \multicolumn{9}{c} {$x_{ij}\sim 2^{-1/2}t(4)$ and $y_{ij}\sim2^{-1/2}t(4)$}\\
\hline
$\gamma_1$ &II  & New & -1.673 &-1.312 &-0.969 & -0.418 & 0.253 & 0.939 &1.703 &2.199 &3.324&0.112\\
$\gamma^*_{2}$ &II  & New &  -3.936 &-1.894 &-1.343 & -0.703 & -0.029 & 0.517 &1.004 &1.324 &1.811&0.068 \\
$\gamma_3$ & II  & New &  -2.873 &-1.959 &-1.512 & -0.981 & -0.274 & 0.444 &1.112 &1.510 &2.249&0.118\\
\hline
\end{tabularx}
\end{table}

\begin{figure}[htbp]
\begin{center}
\includegraphics[width = .31\textwidth]{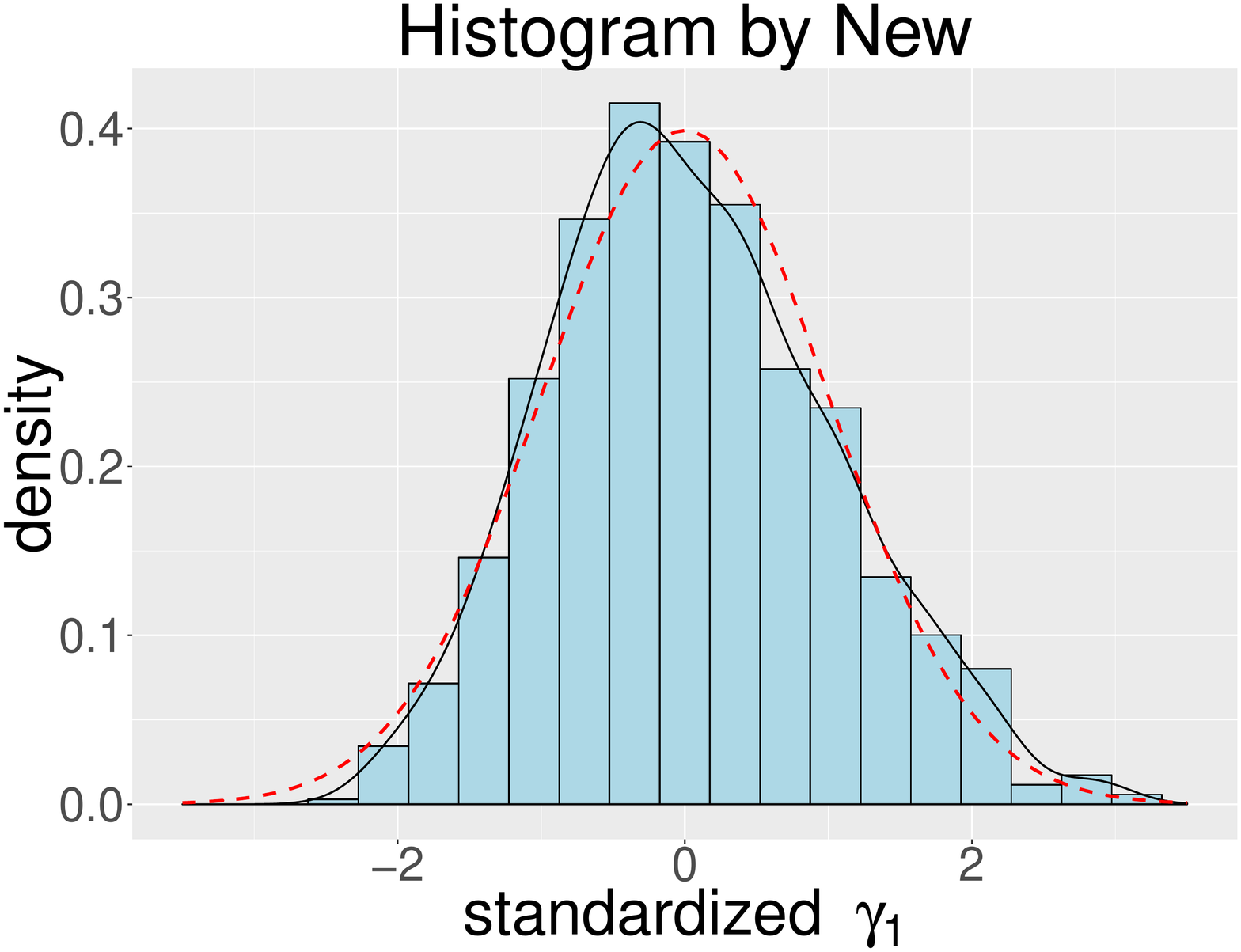}\quad
\includegraphics[width = .31\textwidth] {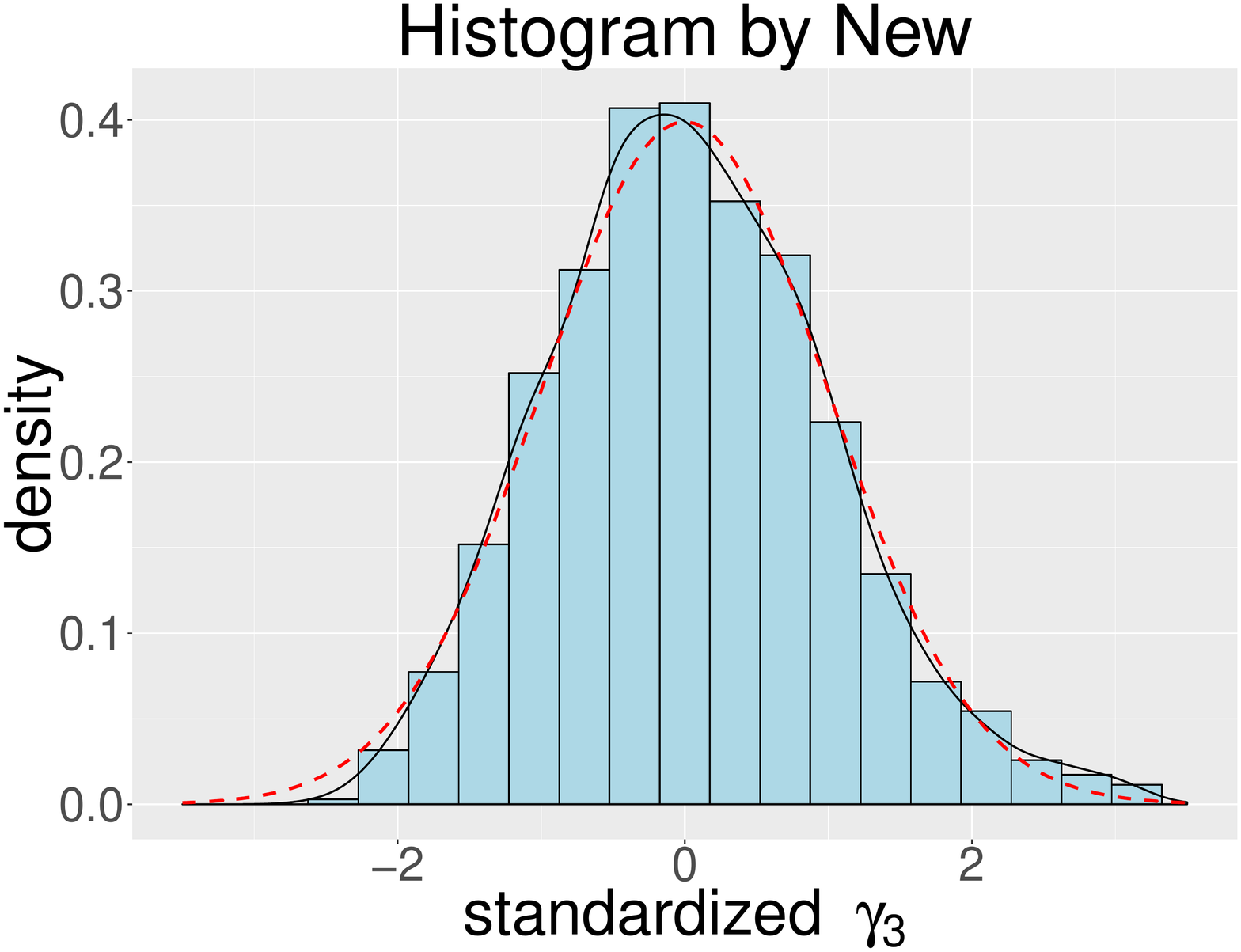}\quad
\includegraphics[width = .31\textwidth] {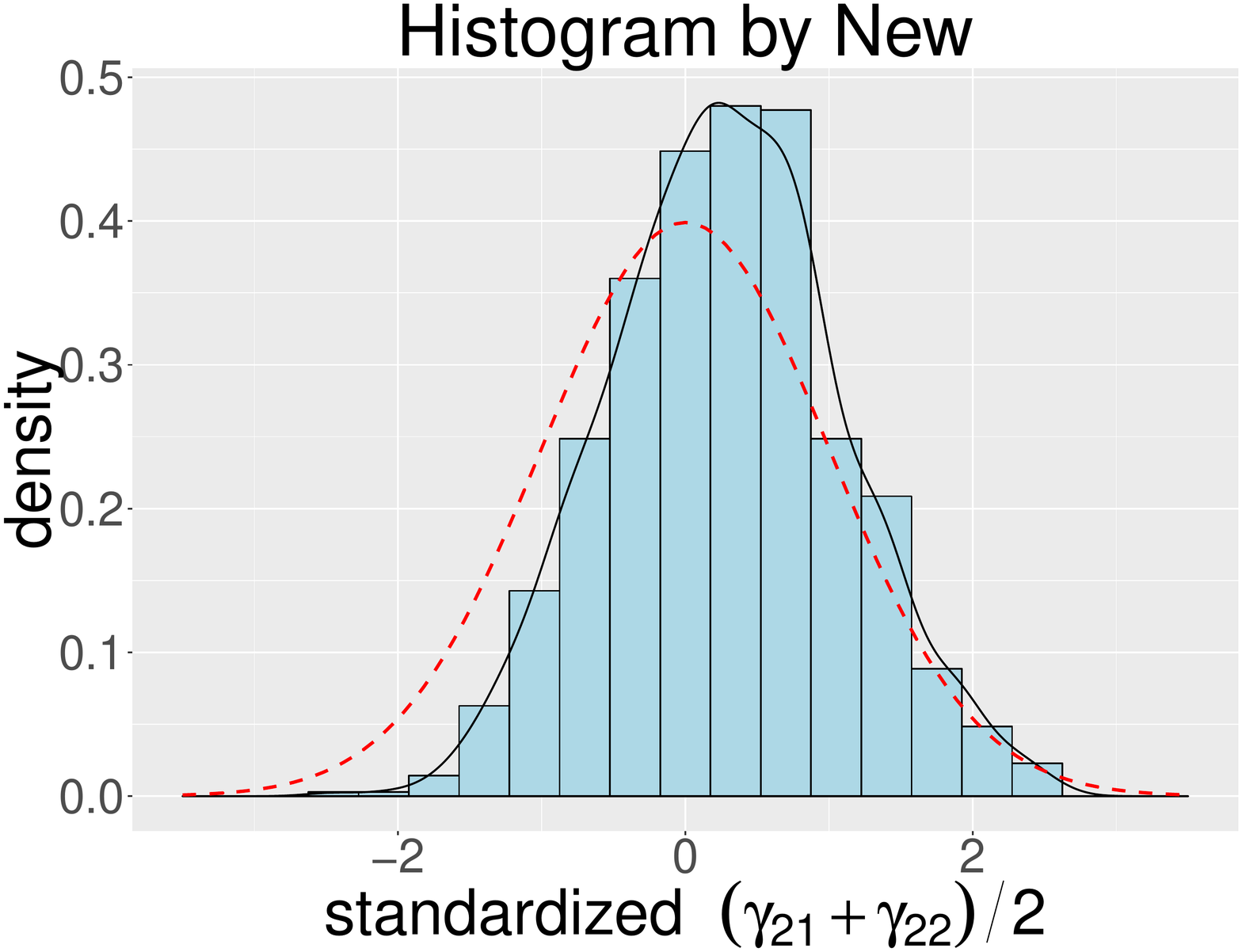}\\
\includegraphics[width = .31\textwidth]{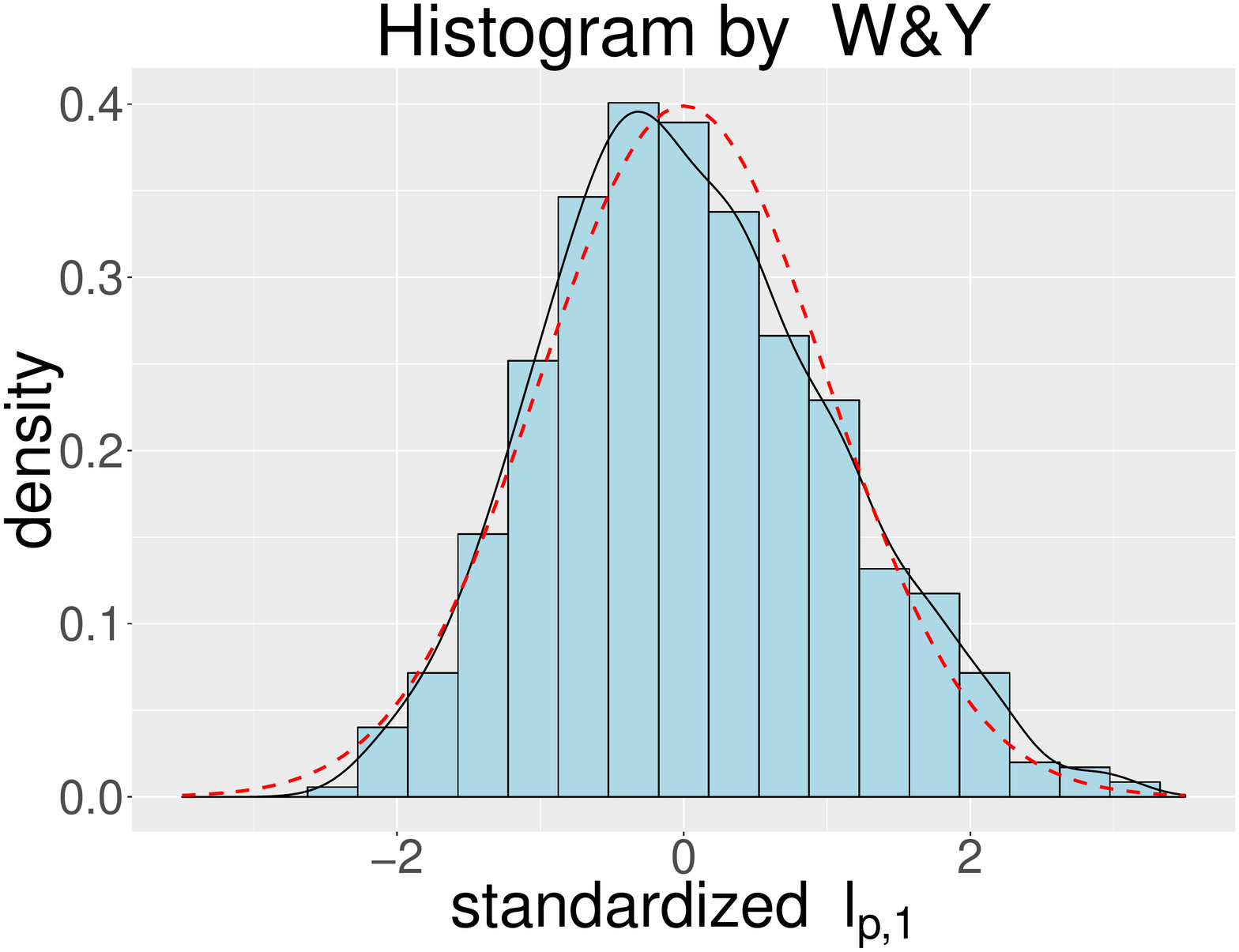}\quad
\includegraphics[width = .31\textwidth] {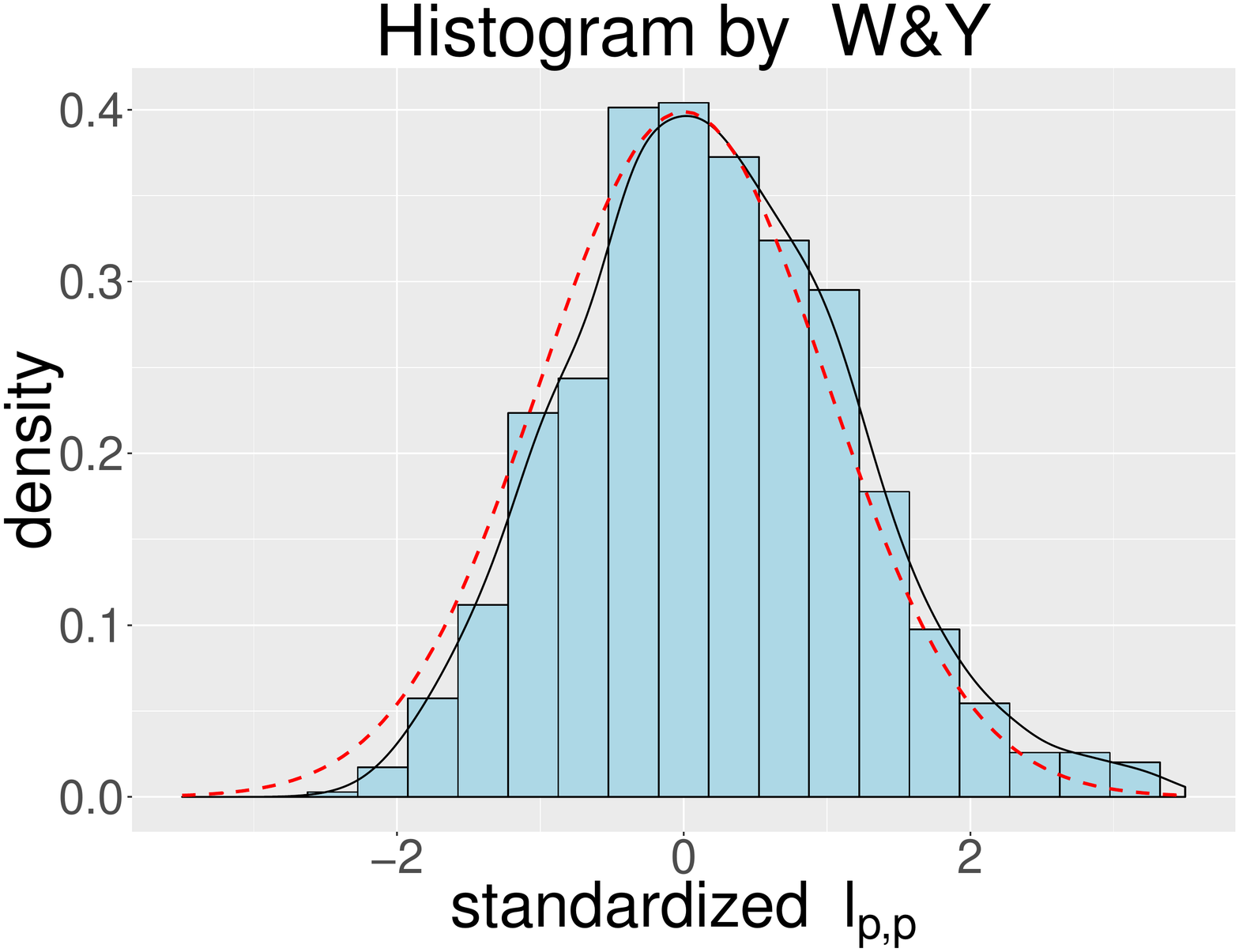}\quad
\includegraphics[width = .31\textwidth] {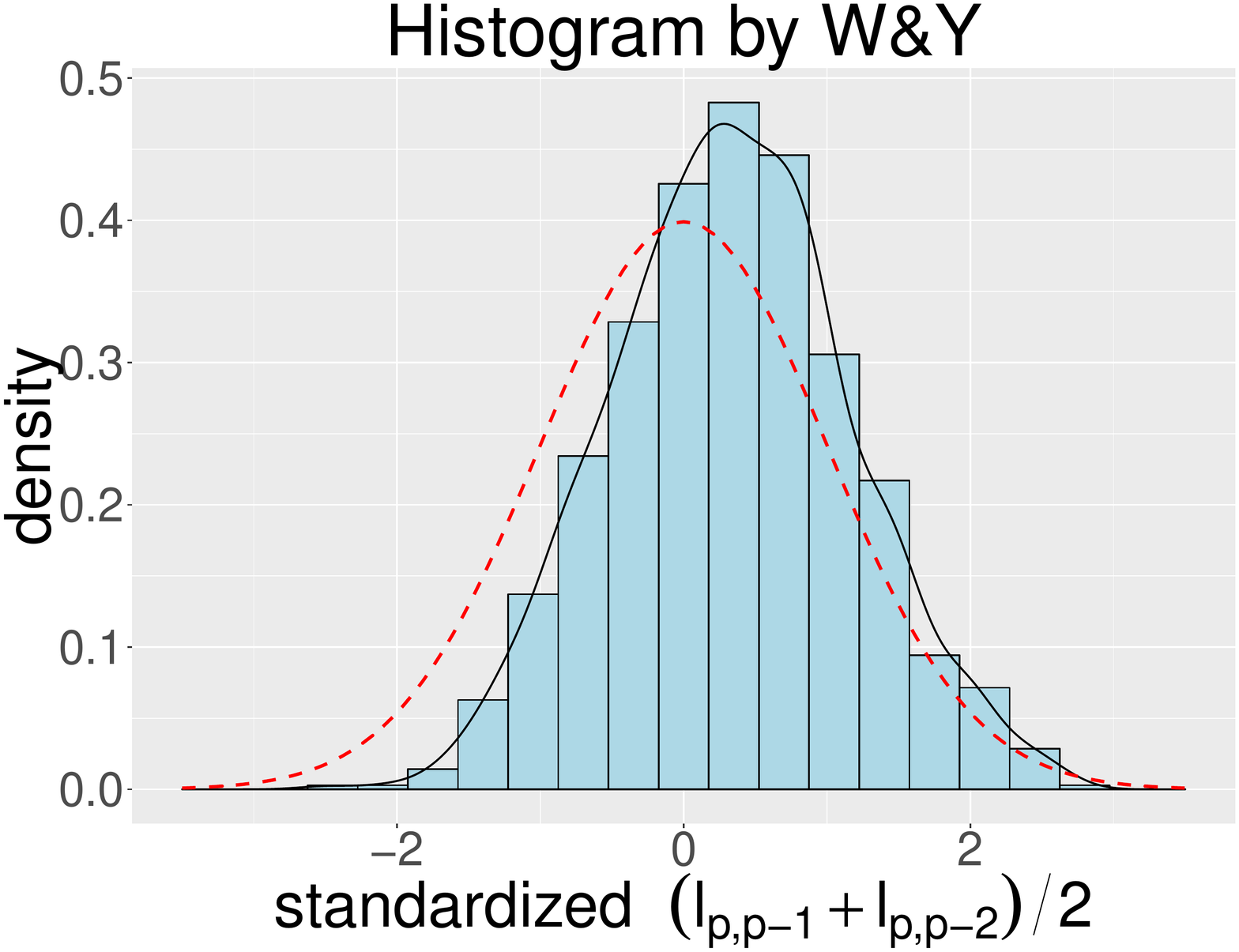}
\end{center}
\centerline{Case I  }
\begin{center}
\includegraphics[width = .31\textwidth]{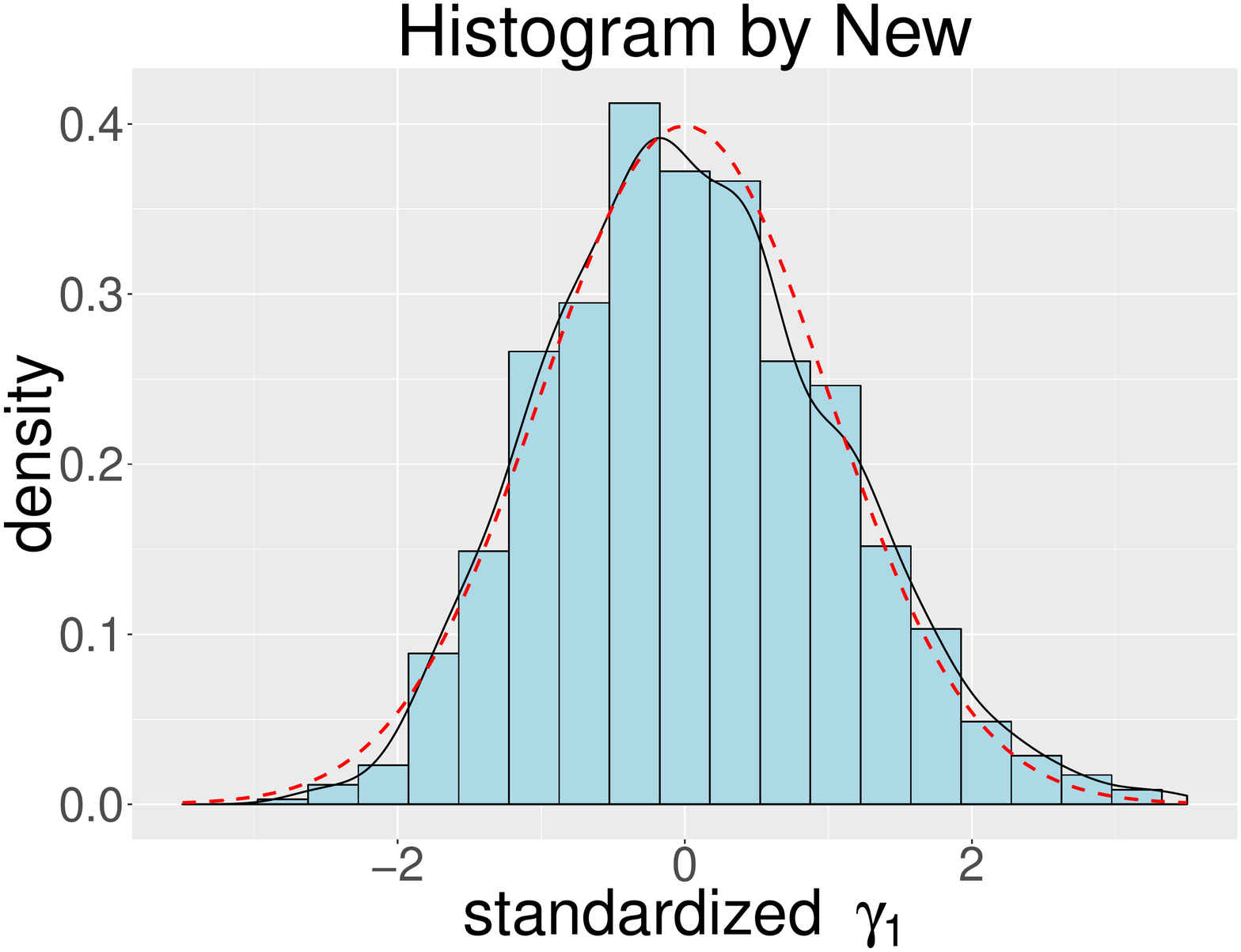}\quad
\includegraphics[width = .31\textwidth]{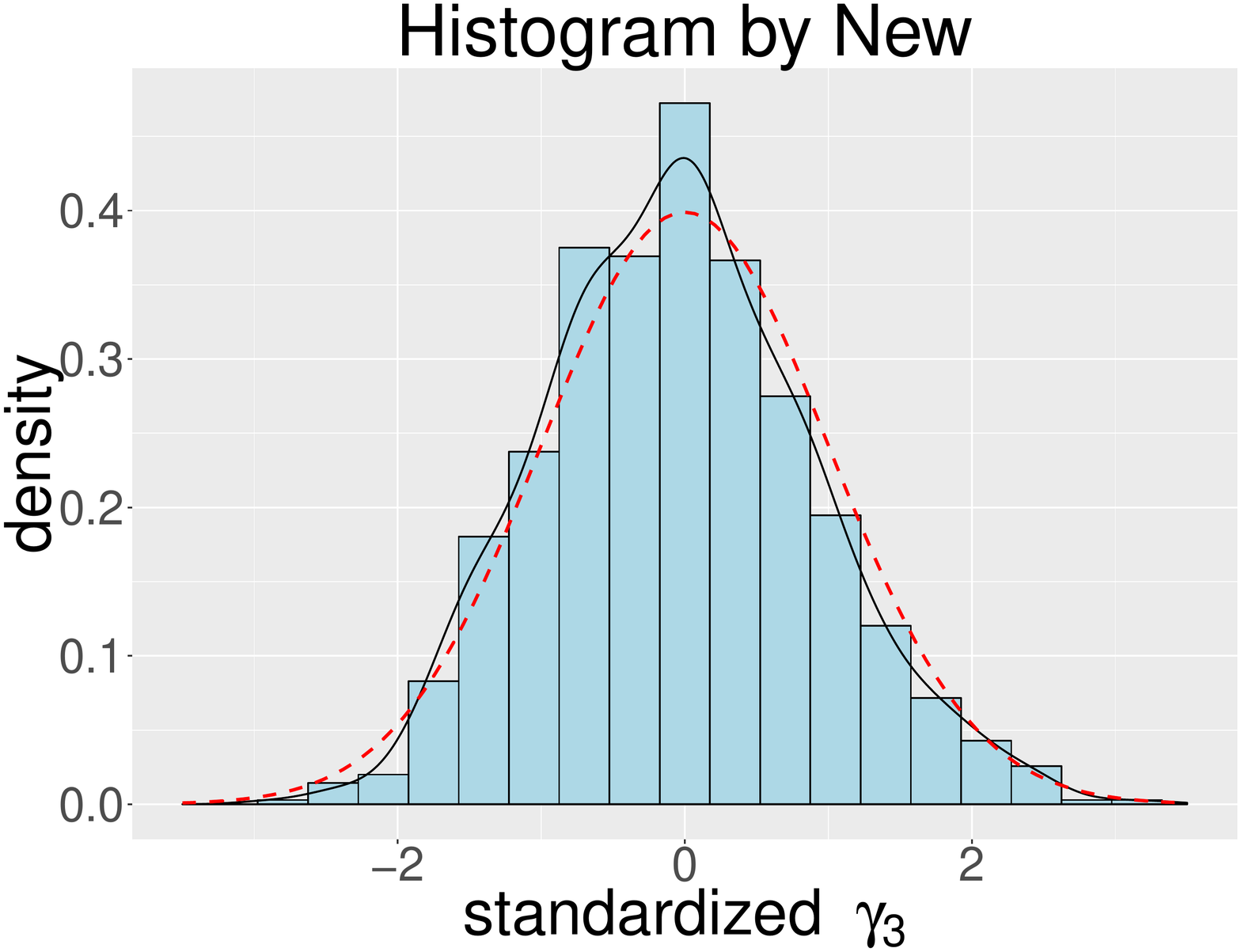}\quad
\includegraphics[width = .31\textwidth]{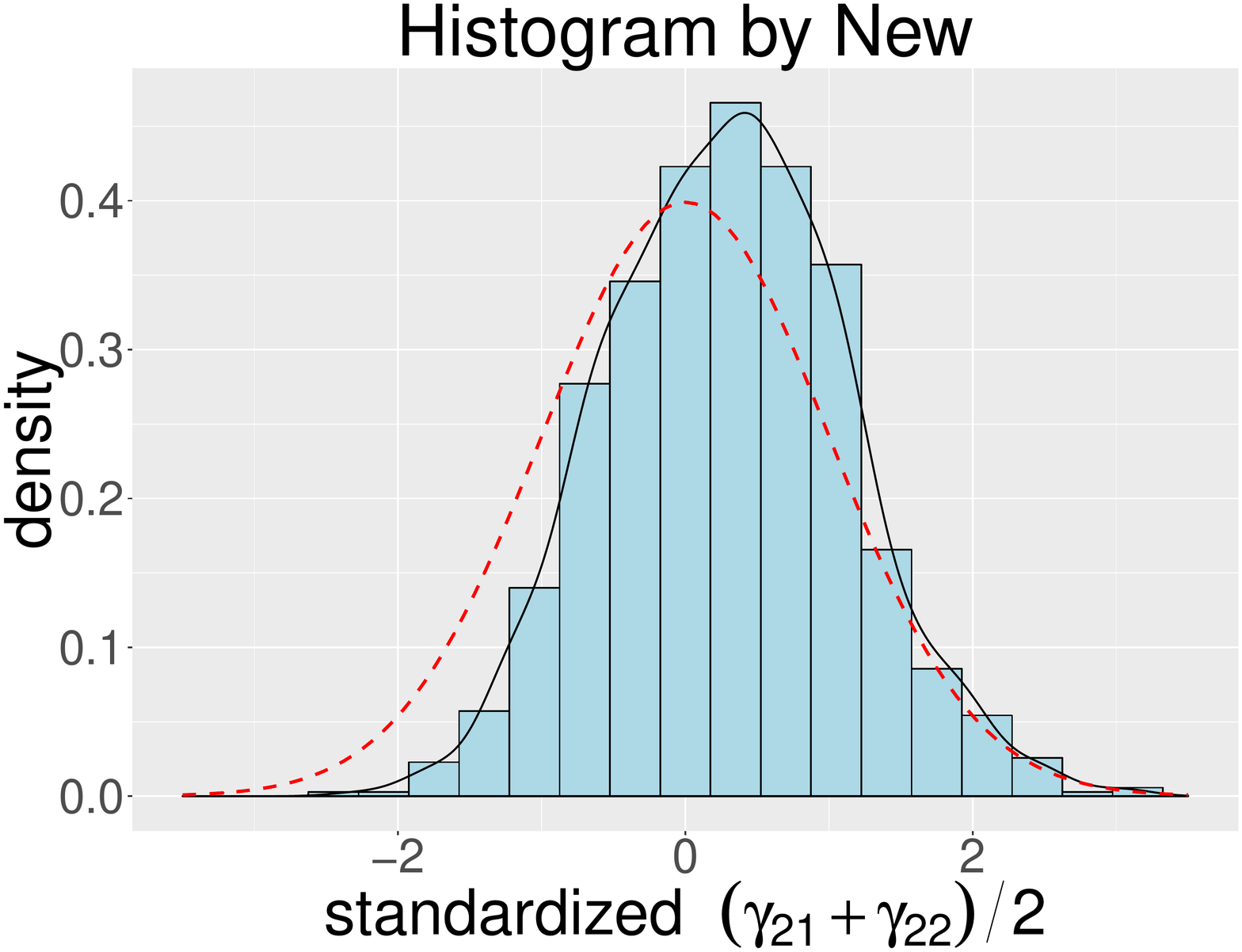}\\
\includegraphics[width = .31\textwidth]{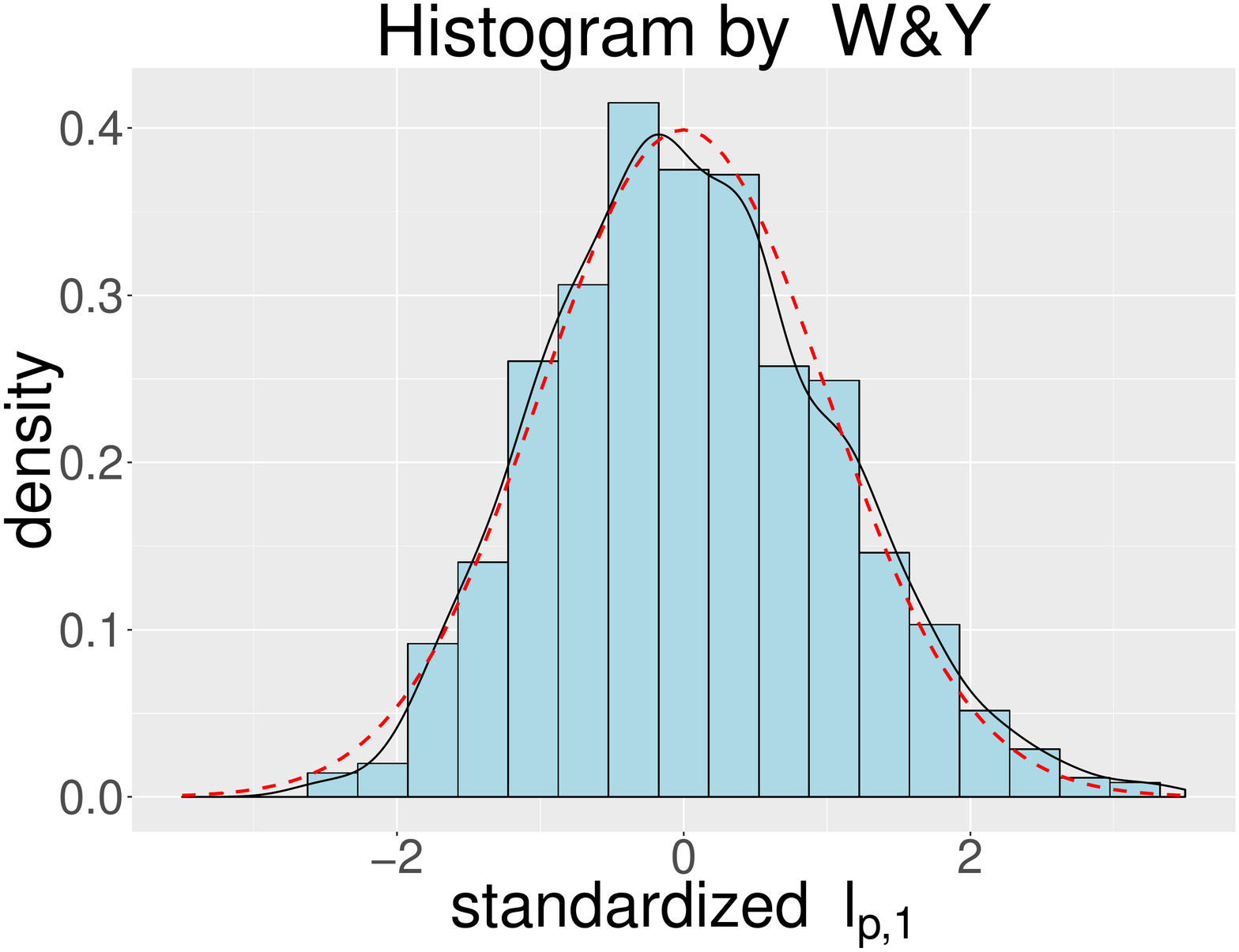}\quad
\includegraphics[width = .31\textwidth]{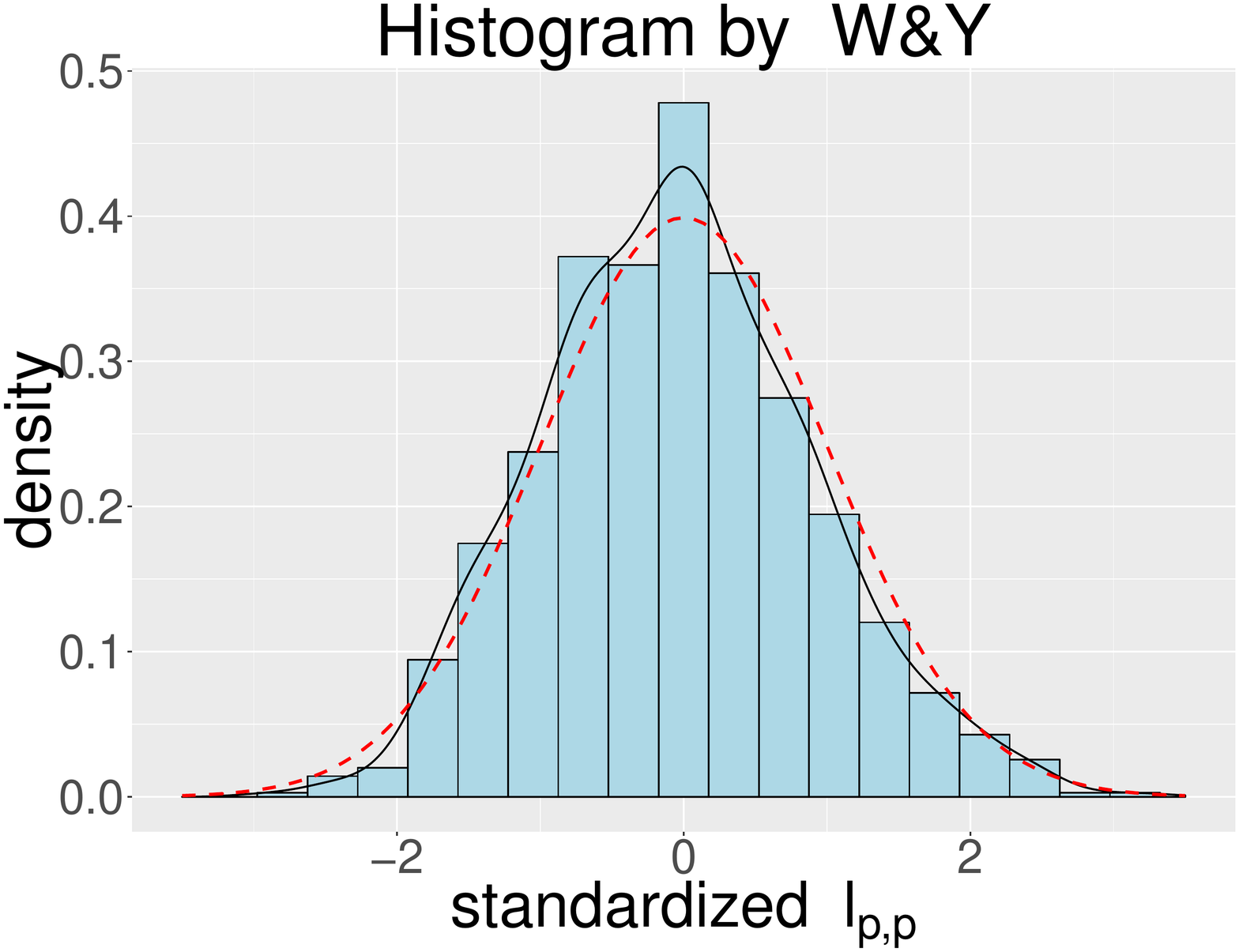}\quad
\includegraphics[width = .31\textwidth]{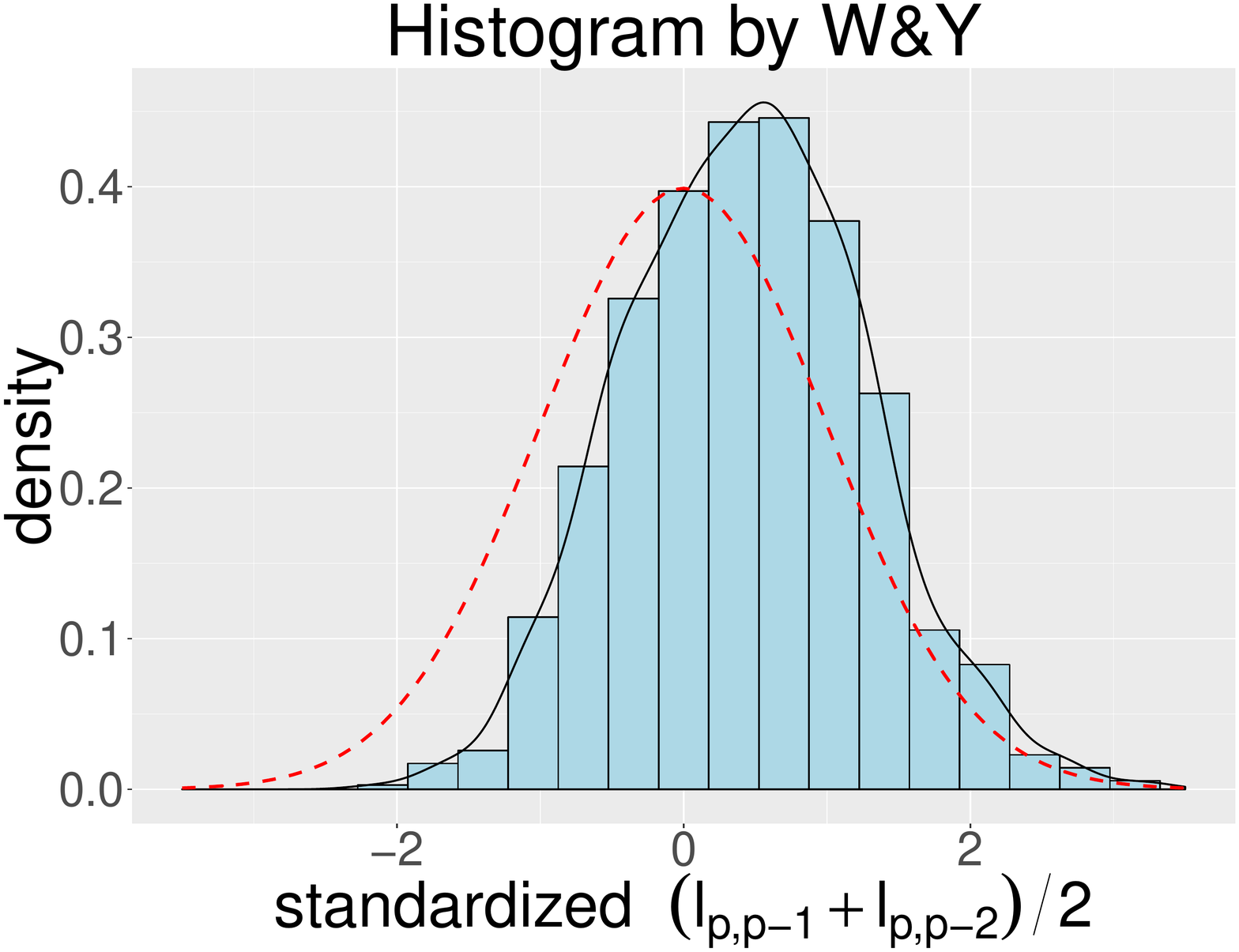}
\end{center}
\centerline{Case II}
\caption{Histograms of standardized estimated eigenvalues over 1000 simulations
when $x_{ij}\sim N(0,1)$ and $y_{ij}\sim N(0,1)$. The solid lines are the kernel density estimate,
and the dashed lines are the probability density function of $N(0,1)$}
\label{fig1}
\end{figure}

\begin{figure}[htbp]
\begin{center}
\includegraphics[width = .31\textwidth]{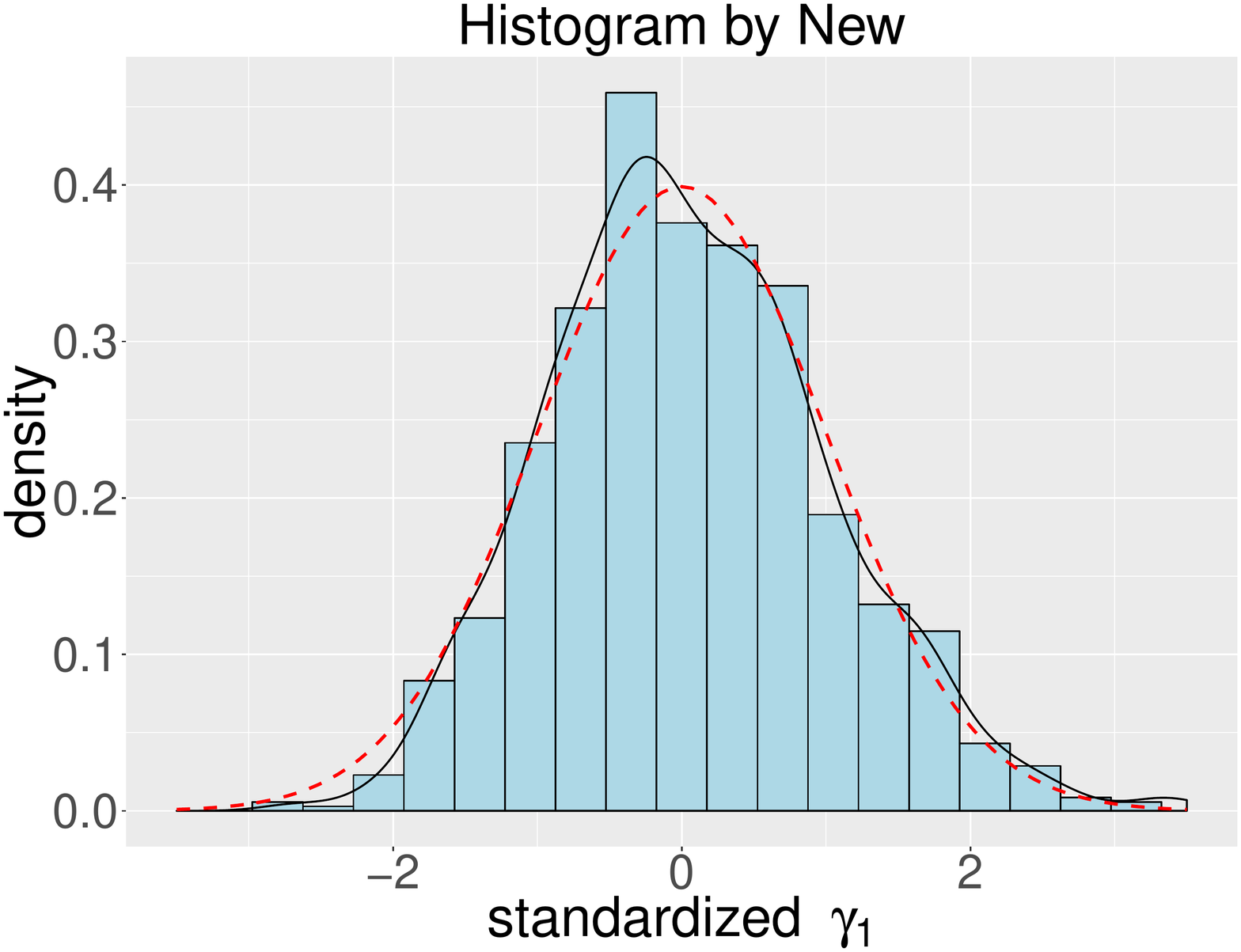}\quad
\includegraphics[width = .31\textwidth]{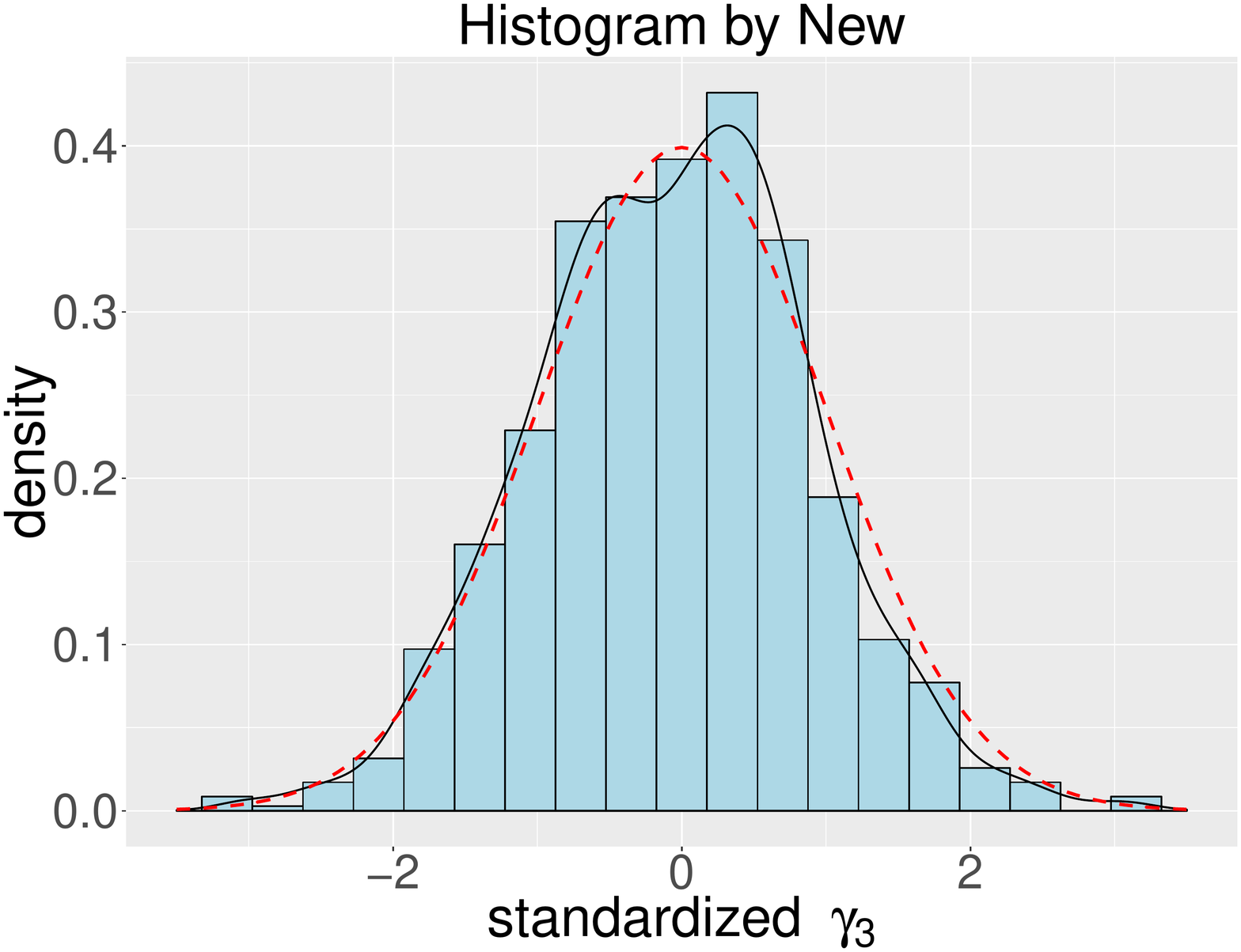}\quad
\includegraphics[width = .31\textwidth]{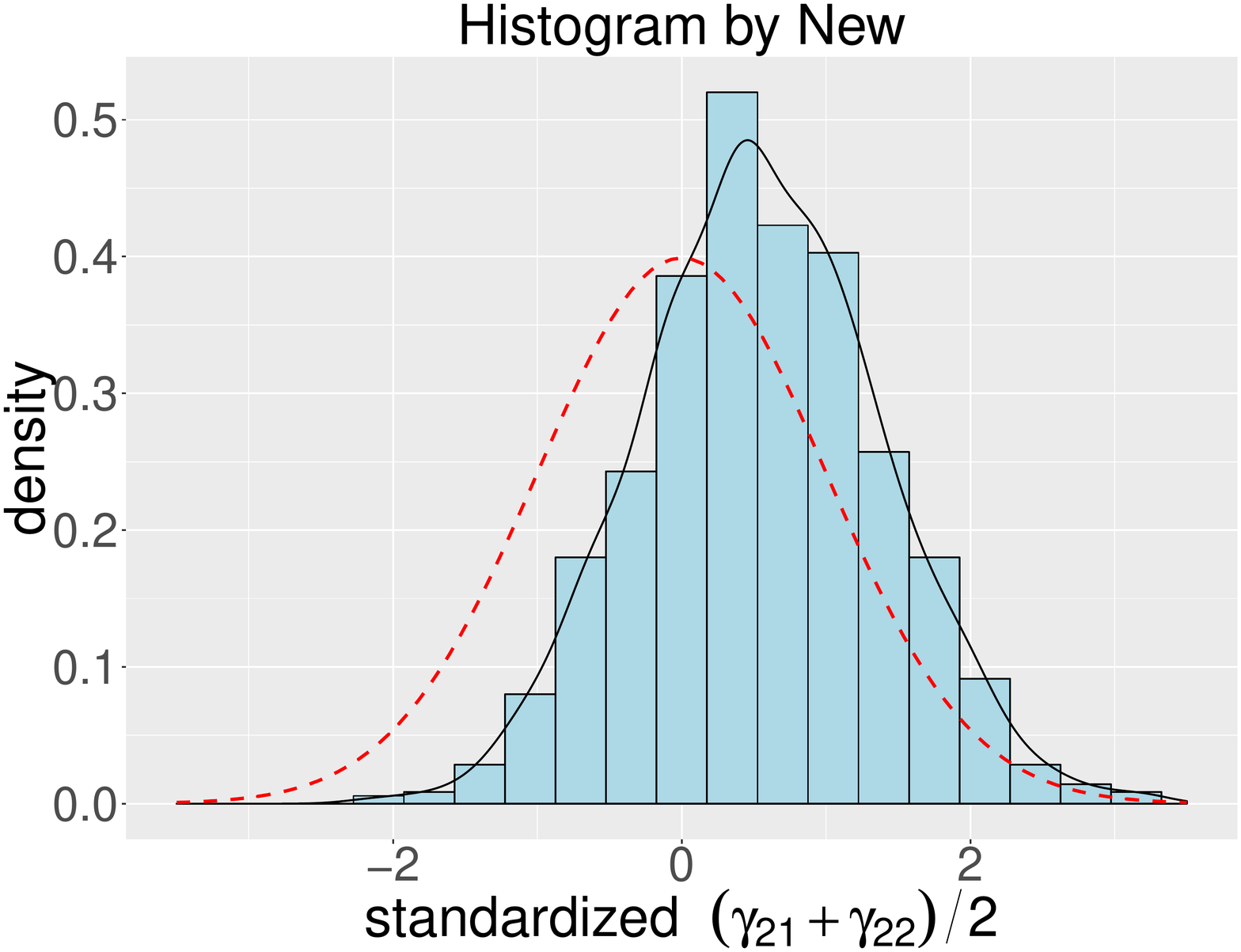}\\
\includegraphics[width = .31\textwidth]{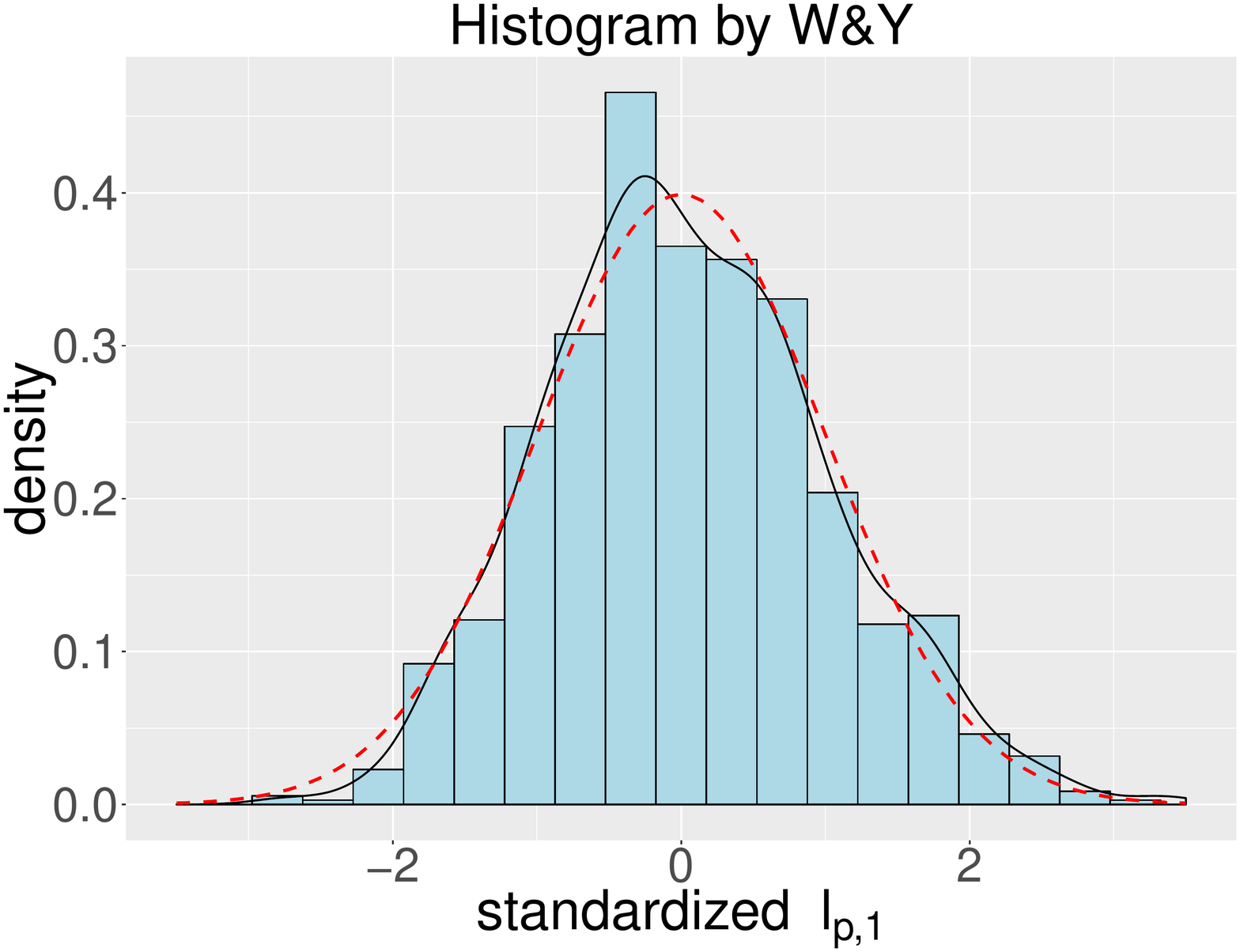}\quad
\includegraphics[width = .31\textwidth]{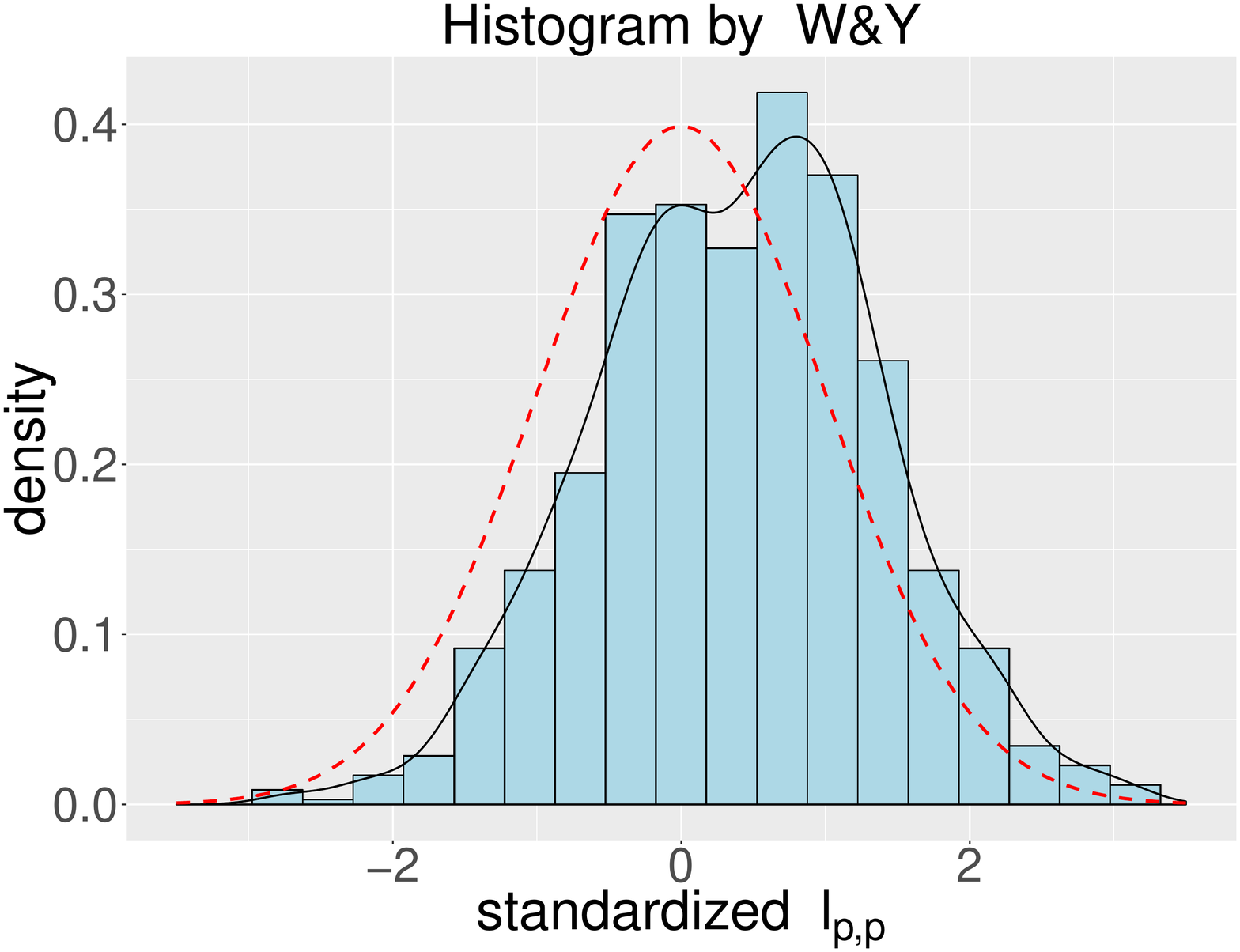}\quad
\includegraphics[width = .31\textwidth]{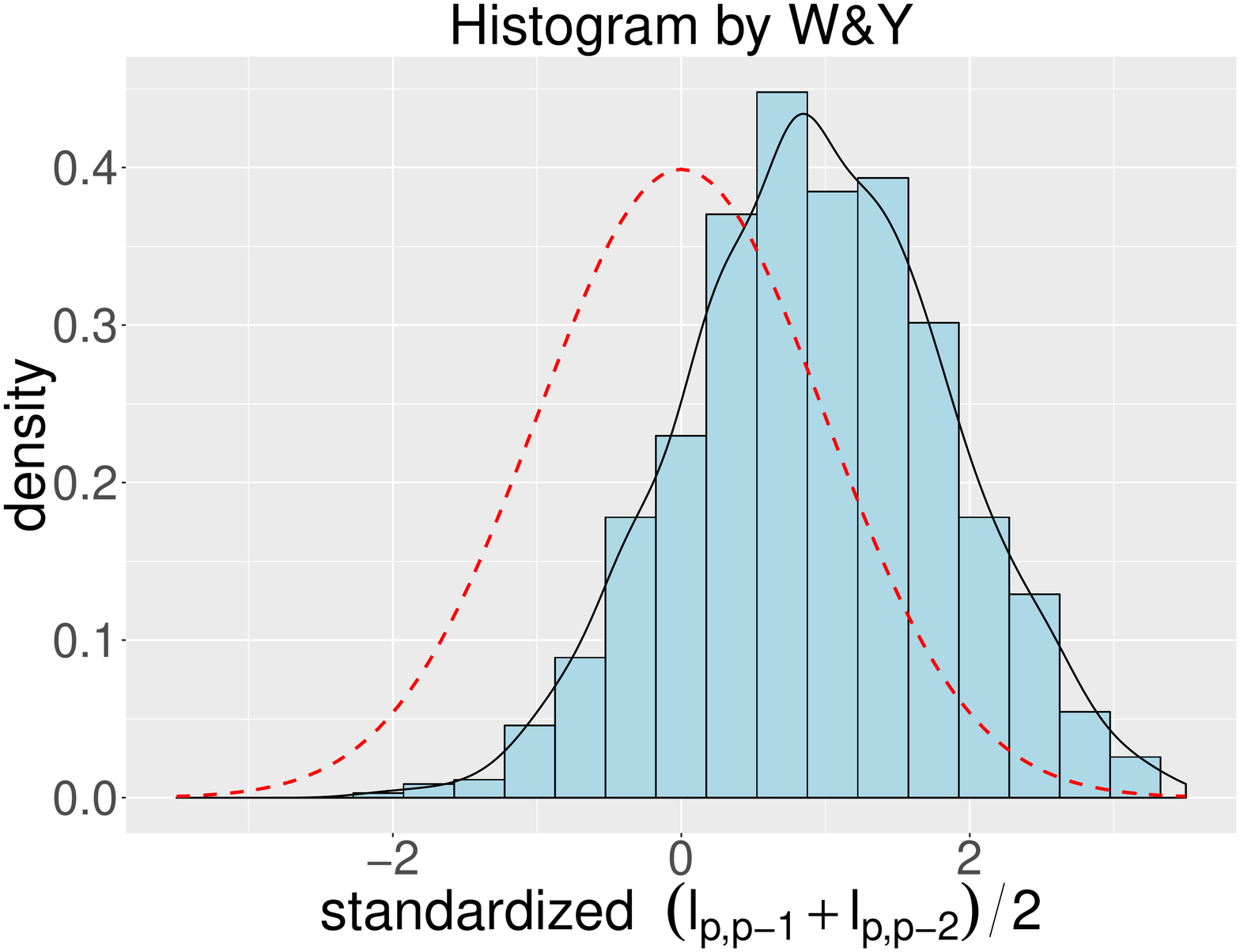}
\centerline{Case~I}
\end{center}
\begin{center}
\includegraphics[width = .31\textwidth]{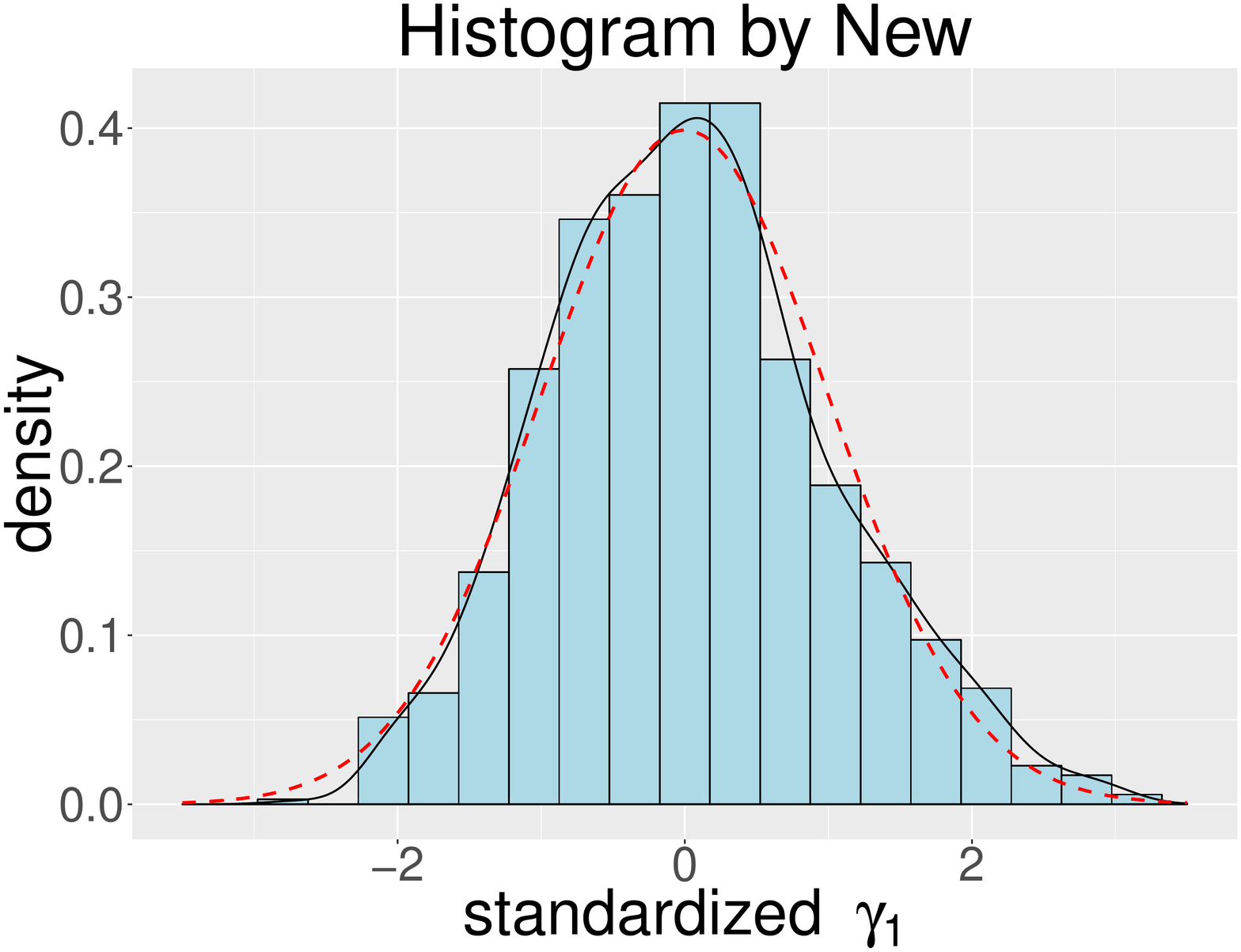}\quad
\includegraphics[width = .31\textwidth]{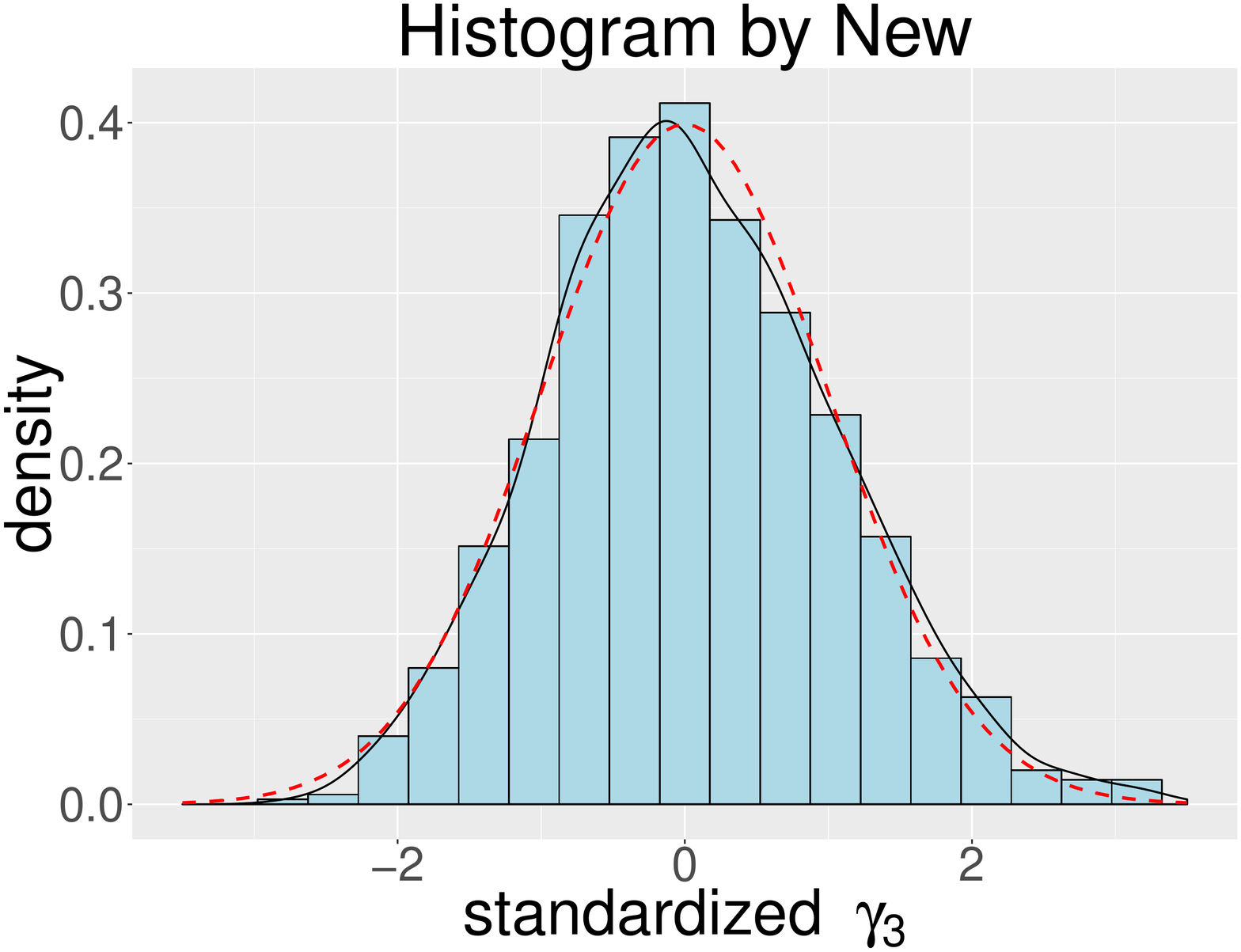}\quad
\includegraphics[width = .31\textwidth]{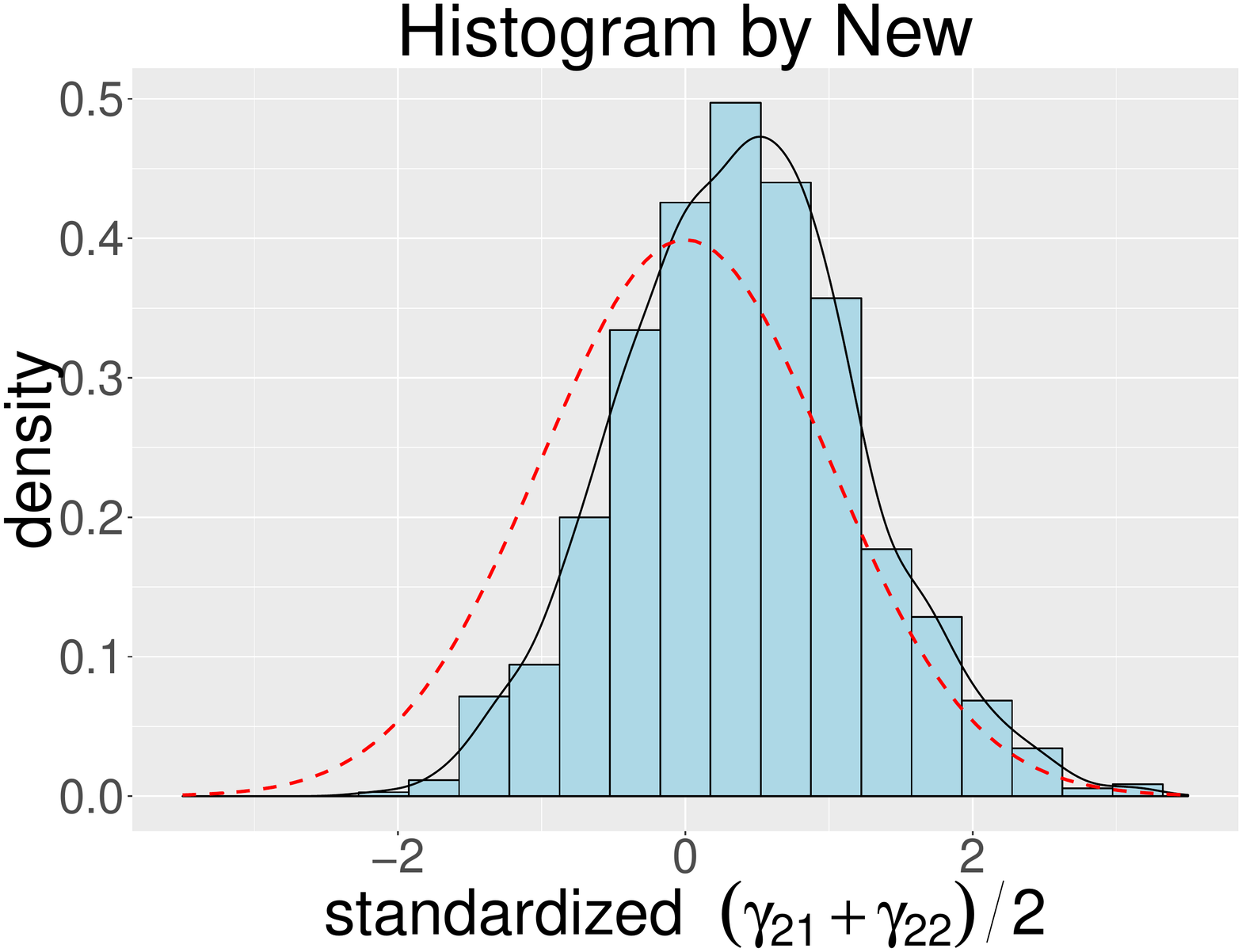}\\
\includegraphics[width = .31\textwidth]{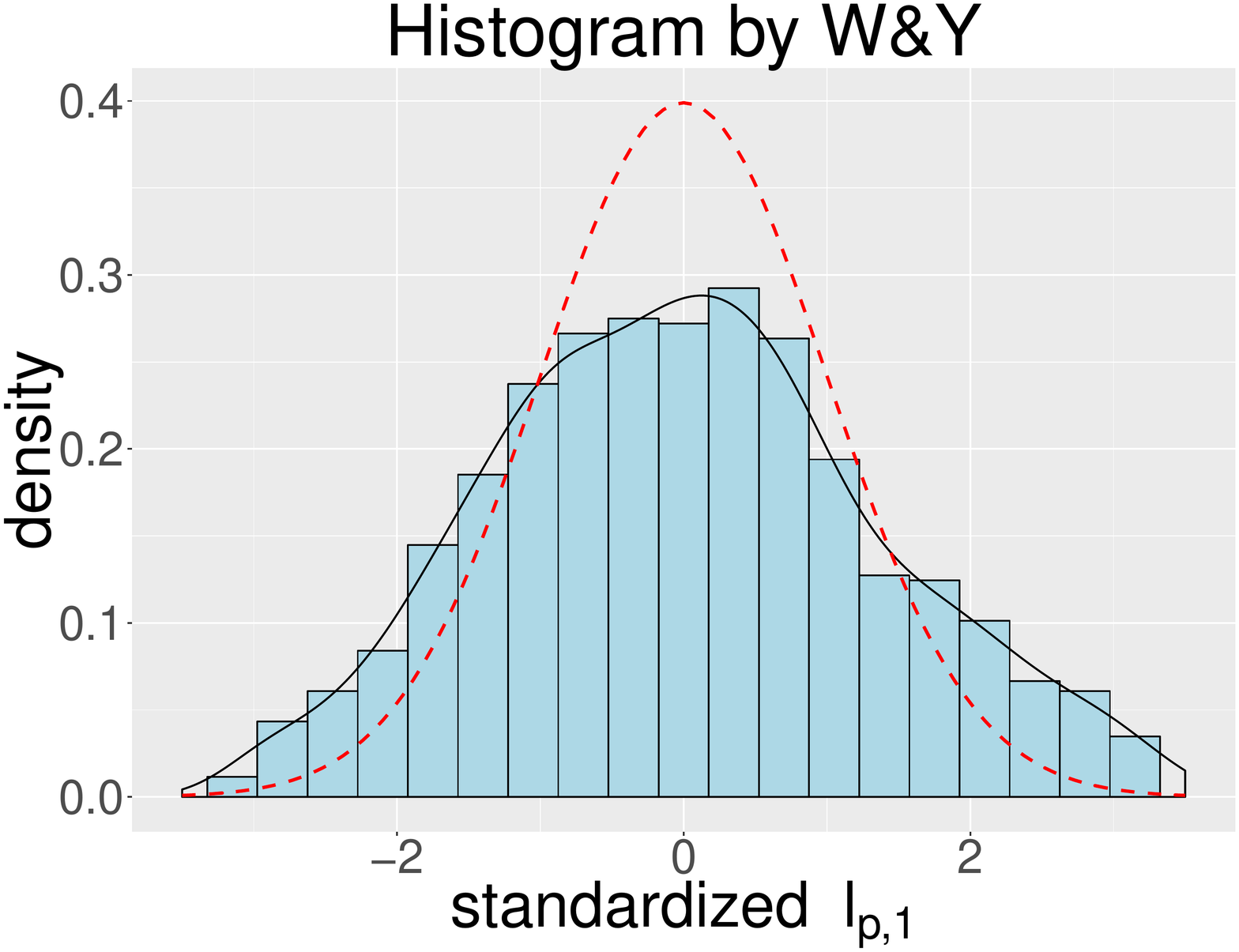}\quad
\includegraphics[width = .31\textwidth]{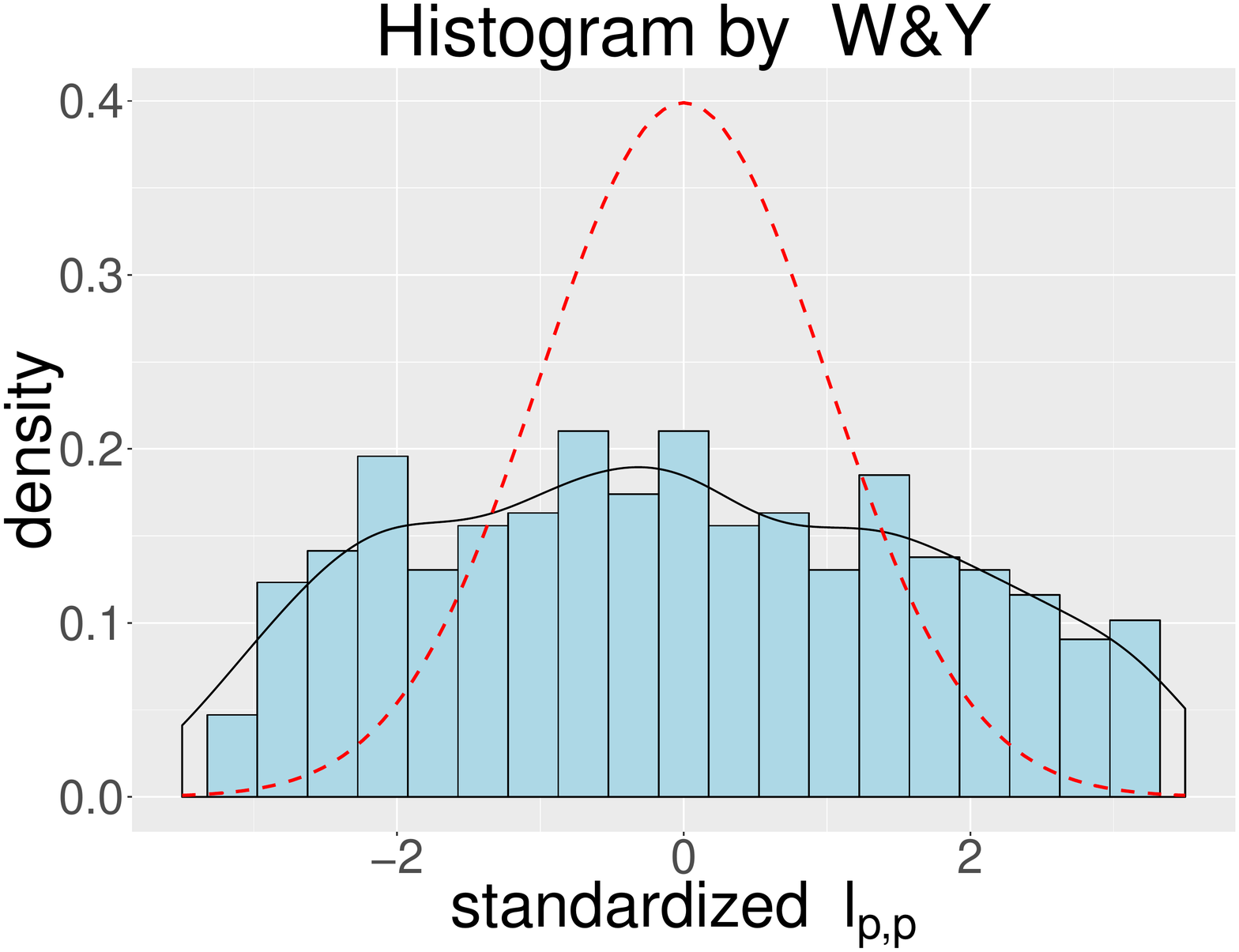}\quad
\includegraphics[width = .31\textwidth]{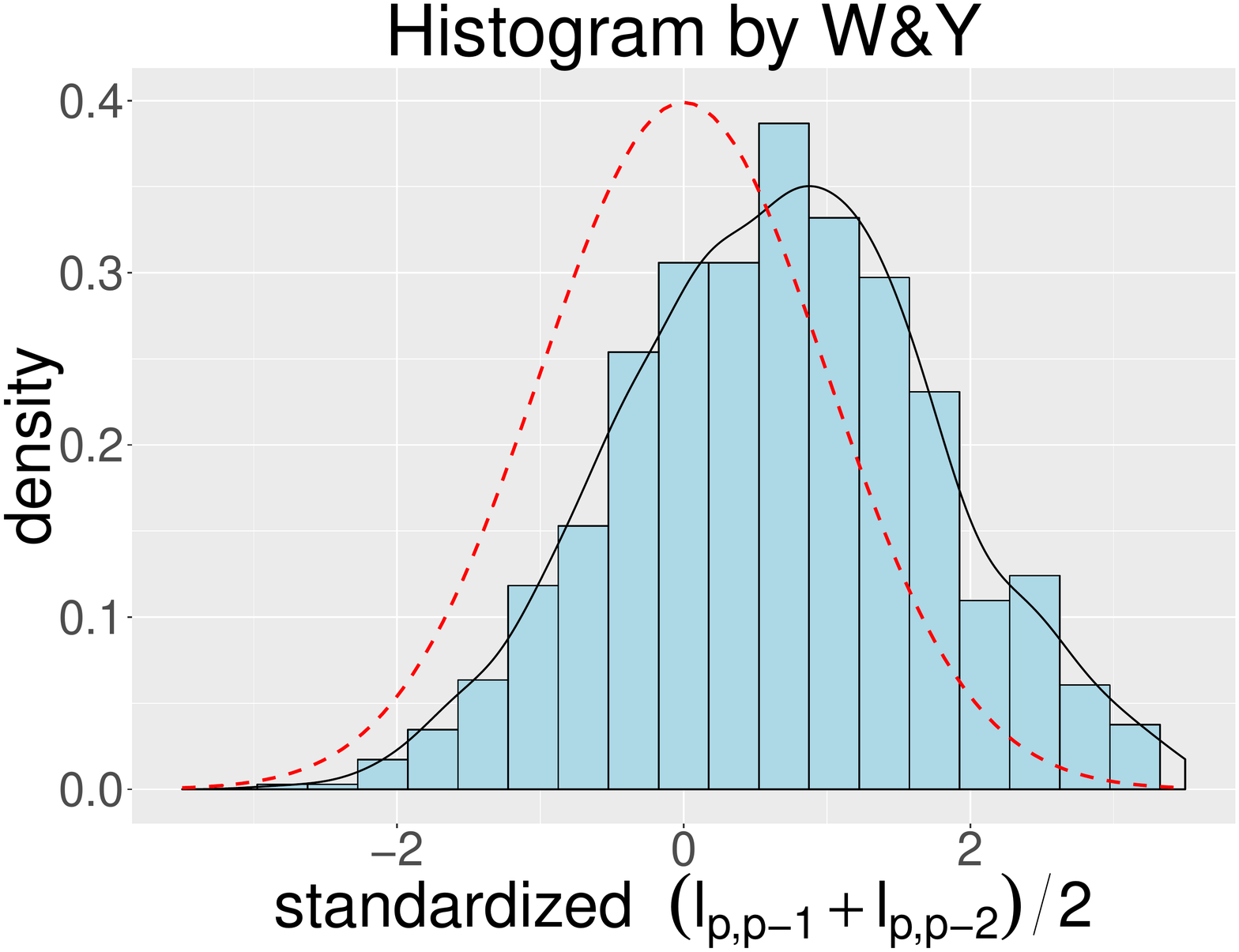}
\end{center}
\centerline{Case II}
\caption{Histograms of standardized estimated eigenvalues over 1000 simulation when
 $P(x_{ij}=\pm 1)=P(y_{ij}=\pm 1) =1/2$. Caption is the same as that in Figure~\ref{fig1}.}\label{fig2}
\end{figure}

  \begin{figure}[htbp]
\begin{center}
\includegraphics[width = .31\textwidth]{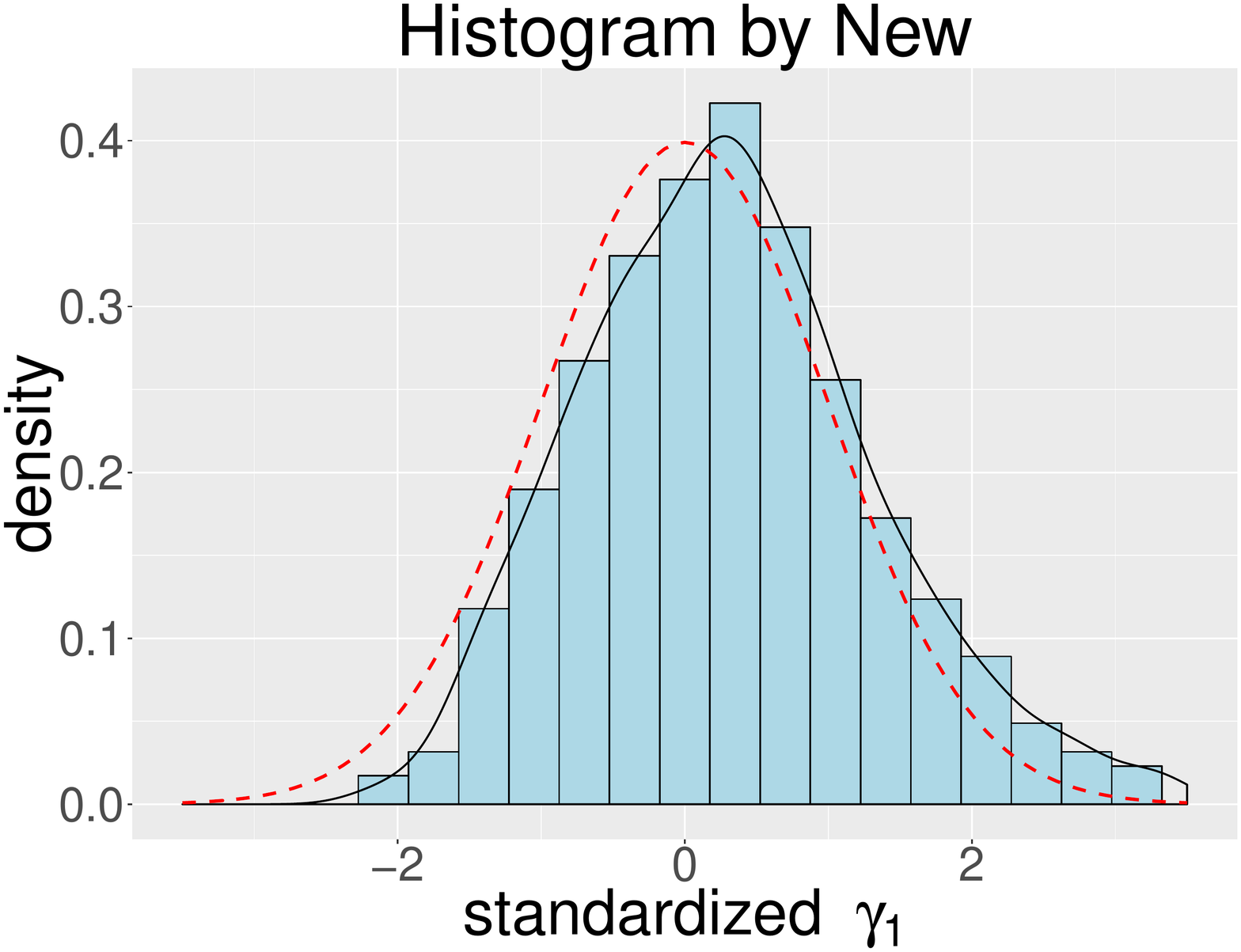}\quad
\includegraphics[width = .31\textwidth]{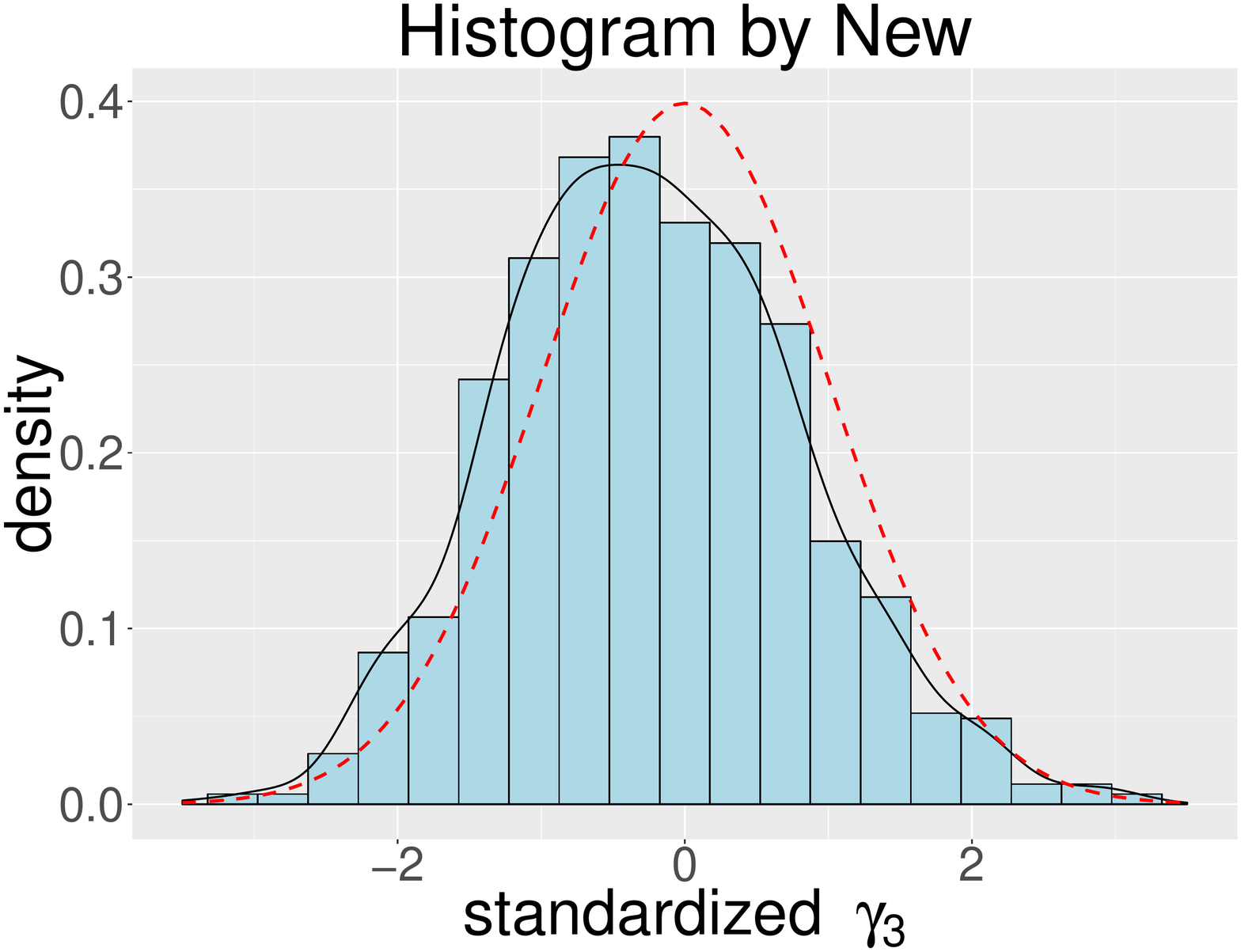}\quad
\includegraphics[width = .31\textwidth]{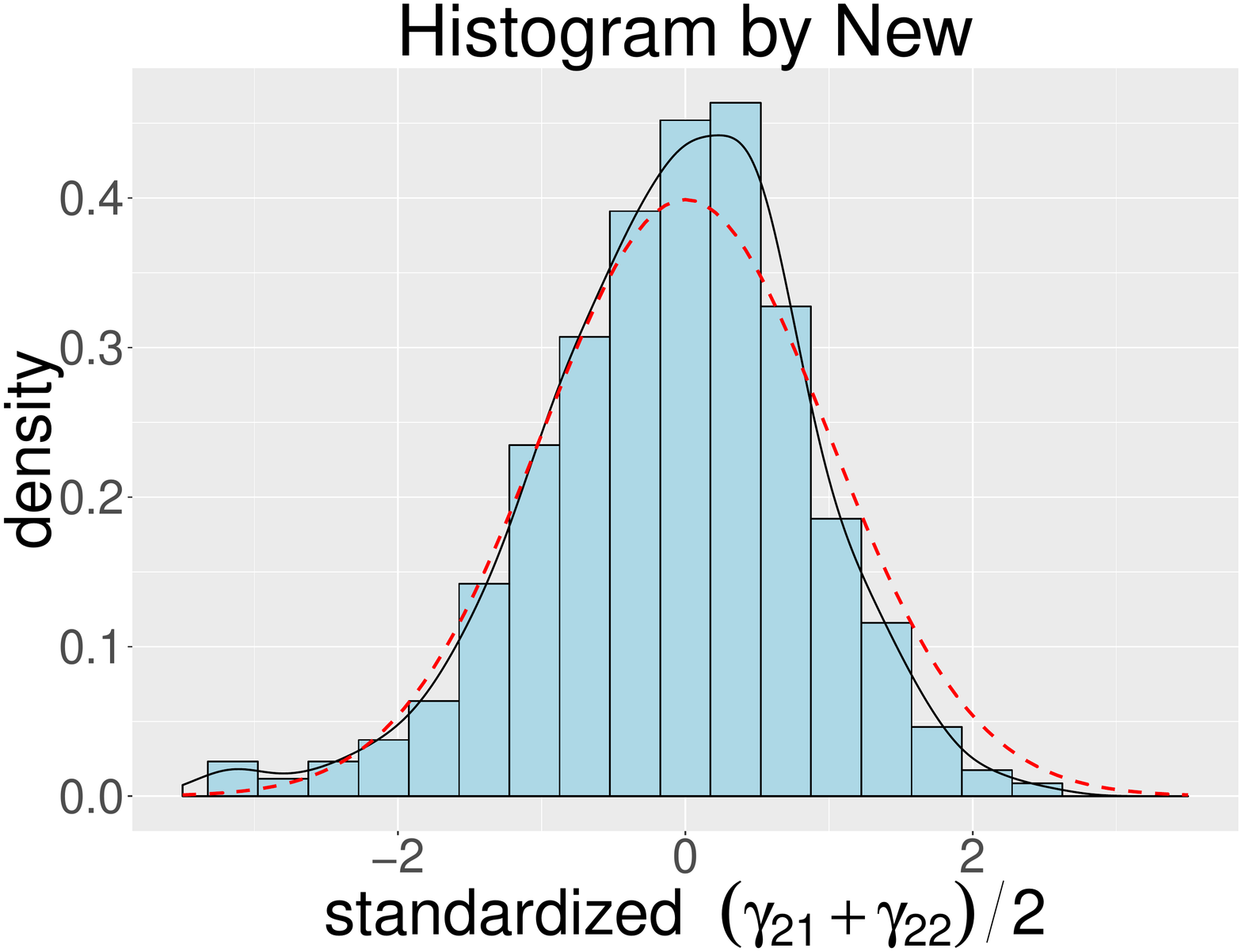}
\caption{Histograms of standardized estimated eigenvalues over 1000 simulations under Case~II when
 $x_{ij}$ and $y_{ij}$ follow the $2^{-1/2}t(4)$. Caption is the same as that in Figure~\ref{fig1}.}
 \label{fig3}
\end{center}
\end{figure}

For {\rm Case~I}, Remark~\ref{rmkclt} may be applied.   For the largest and least
single  population  spikes $\alpha_1=20$ and $\alpha_3=0.1$, the following CLTs are obtained,
\[\gamma_k= \sqrt{p-4}\Big(\frac{l_{p,j}(\mathbf F)}{\psi_{n,k}}-1\Big) \rightarrow N(0, \sigma_k^2),\]
where  $j=1$ for $k=1$ and $j=p$ for $k=3$,
$\psi_{n,1}=42.667, \sigma_1^2=2.383$ 
and $\psi_{n,3}= 0.0737,\sigma_3^2=1.343$ under the Gaussian Assumption; meanwhile, $\sigma_1^2=1.116$ and $\sigma_3^2=0.180$
for the distribution with binary outcome. To make it more accurate, we use $p-M$
instead of $p$ in the calculation.

For   the spikes $\alpha_2=0.2$ with multiplicity 2, we consider the sample eigenvalue $l_{1,p-1}$,
 $l_{1,p-2}$, and obtain that the two-dimensional random vector 
\[\gamma_2=\left(\gamma_{21}, \gamma_{22}\right)'=\left( \sqrt{p-4}\Big(\frac{l_{p,p-2}(\mathbf F)}{\psi_{n,2}}-1\Big),
\sqrt{p-4}\Big(\frac{l_{p,p-1}(\mathbf F)}{\psi_{n,2}}-1\Big)\right)' \]
converges to the eigenvalues of random matrix $-\left[\boldsymbol\Omega_{\psi_{2}}\right]_{22}/{\phi_k}$,
where  $\psi_{n,2}=0.133$, $\phi_k=1.439$ for the spike $\alpha_2=0.2$.  Furthermore, the
matrix
$\left[\boldsymbol\Omega_{\psi_{2}}\right]_{22}$
is a $2\times 2$ symmetric matrix with the independent Gaussian entries, of which the $(s,t)$ element has mean zero and the variance given by
$var(w_{st})=1.163$ if $s\neq t$ and $var(w_{st})=2.326$ if $s=t$
under the Gaussian Assumption. All the results are the same except $var(w_{st})=0.502$ if $s=t$ under the second population
(i.e. binary outcomes).
Under Case~II, it follows by Theorem~\ref{CLT} that the asymptotical means and covariances for all the three distributions considered in this simulation
are the same as the ones of  {\rm Case~I} under Gaussian assumption,
even if the population fourth moments are infinite, such as $t(4)$.


Let $F(u)$ be the cumulative distribution function (cdf) of a random variance $U$, and $F_n(u)$
be its empirical cdf based on a sample $u_1,\cdots,u_n$. Define Kolmogorov-Smirnov (KS)
statistic as follows:
\[
KS =\sum_u|F_n(u)-F(u)|
\]
In our simulation, we set $F(u)$ to be the cdf of $N(0,1)$, and $U$ be the standardized
estimated eigenvalues of the generalized spiked Fisher matrix. We evaluate $F_n(u)$ based
on 1000 simulations.

Table~\ref{tab1} depicts the  nine typical percentiles of the empirical distributions
of the standardized estimated eigenvalues, in which the $\psi_{n,k}$ and $\sigma_k^2$ were
calculated by two methods: one is derived in Theorem 3.1, and the other one is derived in
\cite{WangYao2017}, to compare the finite sample properties of our method and Wang and Yao's method
\cite{WangYao2017}.
The corresponding KS statistics of the two methods are also reported in Table~\ref{tab1}.
To see the overall pattern of the empirical distributions of the estimated spiked eigenvalues,
we present their histograms based on 1000 simulations along with the kernel density
estimate and asymptotic limiting distributions in Figures \ref{fig1}--\ref{fig3}.

Figure~\ref{fig1} depicts the histograms and kernel density estimate of estimated eigenvalues
when $x_{ij}$ and $y_{ij}$ follow $N(0,1)$. From Figure~\ref{fig1}, we can see that under Gaussian
population, both the empirical distributions of our method and Wang and Yao's method are
close to the asymptotical ones. This is further confirmed by the top panel of Table~\ref{tab1}.
The KS statistics of $\hat{\gamma}_1$ for Cases I and II and $\hat{\gamma}_3$ for Case II
are very close for the two methods. Our newly proposed method has
smaller KS statistics for $\hat{\gamma}_2^*$ for Cases I and II and $\hat{\gamma}_3$ for Case I
than Wang and Yao's method.

Figure~\ref{fig2} depicts the histograms and kernel density estimate of estimated eigenvalues
when $P(x_{ij}=\pm 1)=P(y_{ij}\pm1)=1/2$. Figure~\ref{fig2} clearly indicates that our method
works well for both Case I and II, while Wang and Yao's method works well for Case I,
but not for Case II. The middle panel of Table~\ref{tab1} also delivers the same message. Except
for $\hat{\gamma}_1$ in Case I, our method has much smaller KS statistics than Wang and Yao's method.
For the percentiles, it seems that Wang and Yao's method has more spreading-out percentile. This implies
it has larger variance.

Figure~\ref{fig3} depicts the histograms and kernel density estimate of estimated eigenvalues
when $x_{ij}$ and $y_{ij}$ follow $2^{-1/2}t(4)$. Figure~\ref{fig3} looks similar to the histograms
for Case II in Figure~\ref{fig1}. This implies that our method works well when the
fourth moment of population distribution is unbounded. This can be further confirmed
by comparing the bottom panel of Table~\ref{tab1} and the corresponding ones in the
top panel of Table~\ref{tab1}.

%

\begin{table}
\caption{Empirical sizes and powers} \label{Table51}
\begin{tabular}{@{}lccccccccc@{}}
\hline
 && \multicolumn{4}{c} {$q_1/q_0=0.2$} &    \multicolumn{4}{c}
 {$q_1/q_0=0.8$}\\
\cline{4-5} \cline{8-9}
 &&  \multicolumn{2}{c} {Size}&  \multicolumn{2}{c} {Power}& \multicolumn{2}{c} {Size}&  \multicolumn{2}{c} {Power}\\ [-1mm]
&$p$& {New}  & CLRT & {New} & CLRT  & {New}  & CLRT & {New} & CLRT\\
\hline
$\tilde c_1=5,\tilde c_2=0.8$;  & 50 &0.053 &0.058 &0.815 &0.632           &0.054               &0.069        &0.985             &0.794\\
    &100 &0.043  & 0.053 &1   &0.935                                       &0.051               &0.046        &1                    &0.899\\
      & 200 & 0.048  &N.A.  &1  &N.A.                                      &0.051               &N.A.          &1                    &N.A.\\
 $\tilde c_1=5, \tilde c_2=0.5$; & 50  &0.043   &0.057 &1   &0.998         &0.051              &0.049        &1                    &1\\
     &100  & 0.047 & 0.058 &1 &1                                           &0.040               &0.059        &1                    &1\\
       & 200  & 0.039 &N.A.  &1 &N.A.                                      &0.044               &N.A.          &1               &N.A.\\
  $\tilde c_1=5, \tilde c_2=0.2$; & 50  &0.034  &0.044  &1 &1              &0.030               &0.055        &1                     &1\\
      & 100 &  0.038 & 0.049  &1  &1                                       &0.043               &0.052        &1                     &1\\
      & 200 & 0.030  &N.A. &1  &N.A.                                       &0.042               &N.A.          &1                &N.A.\\
\hline
$\tilde c_1=2, \tilde c_2=0.8$;  & 50  & 0.042   &0.056 &{0.547}  &0.467   &0.051              &0.058        &0.838            &0.598\\
      & 100 & 0.041   & 0.043  &0.998  &0.680                              &0.052              &0.058        &1                     &0.752\\
      & 200 & 0.057   &N.A.  &1   &N.A.                                    &0.055              &N.A.          &1                &N.A.\\
 $\tilde c_1=2, \tilde c_2=0.5$;  & 50  & 0.035 & 0.053  &{0.996} &0.803   &0.042              &0.048        &1                     &0.988\\
      &100 & 0.052  & 0.056 &1  &0.995                                     &0.041              &0.052        &1                     &1\\
      & 200  &0.050  &N.A.  &1  &N.A.                                      &0.031              &N.A.          &1                &N.A.\\
 $\tilde c_1=2, \tilde c_2=0.2$;  & 50  &0.041  & 0.062  &1 &1             &0.041              &0.046        &1                    &1\\
      & 100  & 0.038 & 0.068  &1  &1                                       &0.043              &0.038        &1                     &1\\
      & 200& 0.047 &N.A.  &1  &N.A.                                        &0.038              &N.A.          &1                &N.A.\\
                                                                     \hline
$\tilde c_1=0.5, \tilde c_2=0.8$; & 50 & 0.046 & 0.047  &0.278  &0.386     &0.045               &0.049        &0.295             &0.371\\
     & 100  &0.055  & 0.049  &1  &0.658                                    &0.058               &0.054        &0.957              &0.574\\
     & 200 & 0.048  &N.A.  & 1  &N.A.                                      &0.043               &N.A.          &1                &N.A.\\
 $\tilde c_1=0.5, \tilde c_2=0.5$; & 50 & 0.047  & 0.048  &1  &0.699       &0.045                &0.043        &1
 &0.785\\
      & 100  & 0.031   &0.036  &1  &0.940                                  &0.044                &0.055        &1
      &0.927\\
     & 200  & 0.051   &N.A. & 1  &N.A.                                     &0.048               &N.A.          &1               &N.A.\\
  $\tilde c_1=0.5,\tilde c_2=0.2$; & 50  & 0.039 & 0.049  &1   &1          &0.045                 &0.044        &1                     &1\\
      & 100  & 0.037  & 0.047 &1   &1                                      &0.044                 &0.053        &1                     &1\\
      & 200  &  0.040 &N.A. &  1 &N.A.                                     &0.043              &N.A.            &1           &N.A.\\
\hline
\end{tabular}
\end{table}

\subsection{Numerical for Section \ref{sec5.1} } \label{sec5.2}

In this section, we conduct numerical study to compare the newly proposed test procedure for \eqref{HB0} in Section
\ref{sec5.1} with the  corrected likelihood ratio test (CLRT) proposed by
\cite{Baietal2013}. {\color{blue}The simulation results for signal detections in Section~{\ref{app2}}
are similar to those for \eqref{HB0}, we opt to omit them here to save space.}

For testing hypothesis \eqref{HB0},  we generate the elements of $\bB_2$ be
from $\mathcal{N}(1,1)$ for each simulation. Under the null hypothesis,  we set $\bB_{1}=\bo$, while
under  the alternative hypothesis, half of the entries in the first column of $\bB_1$ were generated from
$\mathcal{N}(0.5,1)$ and the rest are zeros.
Assume that the errors  $\bvare_i$ in (\ref{linear model}) follows
$\mathcal{N}_p(0,\bI_p)$. All elements of $\bz_i$ in
the model are independent and identically distributed and are sampled from $\mathcal{N}(1,0.5)$.

We consider two cases: $q_1/q_0=0.8$ and $q_1/q_0=0.2$. For each
case, set $p=50,100,200$, $\tilde c_1=0.5,2,5$ and $\tilde c_2=0.2, 0.5, 0.8$.
The limiting null distribution of Roy's test $\lambda_1(\bH\bG^{-1})$ follows
the Tracy-Widom law for $\tilde c_1/\tilde c_2 \lambda_1$
proposed by \cite{Hanetal2016}.
In order to avoid the calculation of the complex integral in the Tracy-Widom law,
we derive the explicit expressions of the Tracy Widom law for $\tilde c_1/\tilde c_2 \lambda_1$ by
Theorem~1.12  in \cite{Baoetal2018} based on the functional relationship of canonical
correlation matrix and Fisher matrix.
Then we report both empirical  sizes and powers with 1000 replications at  a significance level $\alpha=0.05$.
The simulation results are summarized in the Tables~\ref{Table51}.

The simulation illustrates that the limiting distribution of the Roy
test in the linear regression model provides good sizes and powers in our simulation settings.
Table~\ref{Table51} shows that
the Roy's test in this simulation setting seems to be more powerful than the CLRT.
As seen from Table~\ref{Table51}, the powers of the Roy test rapidly
increases to 1 as the sample size increase. For instance, for the
case of $q_1/q_0=0.2, p=50,\tilde c_1=2, \tilde c_2=0.8$ (i.e., $p=50,n=187,q_0=125,q_1=25$),
the power is  0.547 and  increases to 0.996 for the case of
$q_1/q_0=0.2, p=50,\tilde c_1=2, \tilde c_2=0.5$ (i.e., $p=50,n=225,q_0=125,q_1=25$).
In general, {\color{blue}both the Roy's test and the CLRT are  expected to have  good sizes}, but our approach has higher powers in most cases.
It is worth to noting that the CLRT cannot obtain the sizes and powers in the large-dimensional
setting, since the log-likelihood ratio involved in the test
statistic is approaching infinity for such cases. The limiting
distribution derived in Section 4 can still be used to calculate
the empirical powers even with the increasing dimension, while
the CLRT  fails when $p=200$.





\clearpage

%

%




\begin{funding}
The first author was supported by NSFC Grant 11971371 and  Natural Science Foundation of Shaanxi Province 2020JM-049.
\end{funding}
\clearpage
\newpage

%
%
%
%
%
%
%
\newpage

\end{document}